\newtheoremstyle{theorem}
  {12pt}          
  {12pt}      
  {\sl}  
  {\parindent}     
  {\bf}  
  {. }    
  { }    
  {}     
\newtheorem{thm}{Theorem}[section]
\newtheorem{lem}[thm]{Lemma}
\newtheorem{prop}[thm]{Proposition}
\newtheorem{defn}{Definition}[section]
\newtheorem{rem}{Remark}[section]
\begin{document}

\thispagestyle{empty} \setcounter{page}{1}



\begin{center}
{\Large\bf Monotonicity Results for Delta and Nabla Caputo and Riemann Fractional Differences via dual identities}

\vskip.20in

Thabet Abdeljawad $^{a}$\footnote{} \; and \; Bahaaeldin Abdalla $^{a}$\footnote{}
\\[2mm]

{\footnotesize $^a$Department of Mathematics and Physical
Sciences,  Prince Sultan University\\
{\footnotesize P. O. Box 66833, Riyadh 11586, Saudi Arabia}\\

tabdeljawad@psu.edu.sa, babdallah@psu.edu.sa}

\end{center}

\vskip.2in

{\footnotesize { \textbf{Abstract}: Recently, some authors have proved monotonicity results for delta and nabla fractional differences separately. In this article, we use dual identities relating delta and nabla fractional difference operators to prove shortly the monotonicity properties for the (left Riemann) nabla fractional differences using the corresponding delta type properties. Also, we proved some monotonicity properties for  the Caputo fractional differences. Finally, we use the $Q-$operator dual identities to prove monotonicity results for the right fractional difference operators.  }
 \\

{\bf Keywords:} right (left) delta and nabla fractional sums, right (left) delta and nabla  Riemann and  Caputo  fractional differences,  Q-operator, dual identity.

\section{Introduction and preliminaries about fractional sums and differences}
Fractional calculus have attracted many researchers in different fields of engineering and science since not short time \cite{podlubny, Samko, Kilbas}. The extent of interest in this field reaches every concept can be applied to fractional dynamical systems such as delay , impulse, stability, controllability, biological modelling, variational calculus, etc. \cite{Sad, Thabet,FTH,Higher var, Cont, machado,thabetetal}. Discrete fractional calculus remained without serious developing til the beginning of the last decade in the last century. Twenty years after the articles \cite{Gray, Miller},  many authors started to attack discrete fractional calculus very extensively (\cite{Th Caputo}-\cite{Holm}). Recently, some authors started to study monotonicity and convexity properties of delta and nabla (left Riemann) fractional differences (\cite{Dahal}-\cite{Christ}). For example, the authors studied the monotonicity properties for delta fractional differences of order $0<\alpha <1$ while others studied the case $1<\alpha <2$. In \cite{Jia} the authors proved two monotonicity results for delta and nabla fractional differences separately (see Theorem A and Theorem B there). Then, very recently, the authors in \cite{Slov} improved the results obtained for the delta case by using better starting conditions. In this article, we use the dual identities relating delta and nabla (left Riemann) fractional differences \cite{Thsh, Thdiscrete} to provide monotonicity short proofs for the nabla case using the delta case. Then, we used the relation between Riemann and Caputo fractional differences to carry the analysis in (\cite{Dahal}-\cite{Christ}) from Riemann fractional differences to Caputo fractional differences. Finally, we used the action of the $Q-$operator \cite{Th Caputo, Thsh, Thdiscrete} in relating left and right fractional difference types to prove monotonicity results for right fractional difference types.

For a natural number $n$, the fractional polynomial is defined by,

 \begin{equation} \label{fp}
 t^{(n)}=\prod_{j=0}^{n-1} (t-j)=\frac{\Gamma(t+1)}{\Gamma(t+1-n)},
 \end{equation}
where $\Gamma$ denotes the special gamma function and the product is
zero when $t+1-j=0$ for some $j$. More generally, for arbitrary
$\alpha$, define
\begin{equation} \label{fpg}
t^{(\alpha)}=\frac{\Gamma(t+1)}{\Gamma(t+1-\alpha)},
\end{equation}
where the convention that division at pole yields zero.
Given that the forward and backward difference operators are defined
by
\begin{equation} \label{fb}
\Delta f(t)=f(t+1)-f(t)\texttt{,}~\nabla f(t)=f(t)-f(t-1)
\end{equation}
respectively, we define iteratively the operators
$\Delta^m=\Delta(\Delta^{m-1})$ and $\nabla^m=\nabla(\nabla^{m-1})$,
where $m$ is a natural number.

 Here are some properties of the  factorial function.

\begin{lem} \label{pfp} (\cite{Ferd1})
Assume the following factorial functions are well defined.

(i) $ \Delta t^{(\alpha)}=\alpha  t^{(\alpha-1)}$.

(ii) $(t-\mu)t^{(\mu)}= t^{(\mu+1)}$, where $\mu \in \mathbb{R}$.

(iii) $\mu^{(\mu)}=\Gamma (\mu+1)$.

(iv) If $t\leq r$, then $t^{(\alpha)}\leq r^{(\alpha)}$ for any
$\alpha>r$.

(v) If $0<\alpha<1$, then $ t^{(\alpha\nu)}\geq
(t^{(\nu)})^\alpha$.

(vi) $t^{(\alpha+\beta)}= (t-\beta)^{(\alpha)} t^{(\beta)}$.
\end{lem}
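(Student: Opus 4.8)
The plan is to derive all six items directly from the definition \eqref{fpg}, namely $t^{(\alpha)}=\Gamma(t+1)/\Gamma(t+1-\alpha)$, using only the functional equation $\Gamma(x+1)=x\,\Gamma(x)$, supplemented for the two inequalities by two classical facts about the gamma function: that the digamma function $\psi=\Gamma'/\Gamma$ is increasing on $(0,\infty)$, and that $\Gamma$ is log-convex there (Bohr--Mollerup). Under the standing ``well defined'' hypothesis the arguments of the gamma functions that occur are taken to lie off the poles, so all the manipulations below are licit.

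For (i) I would compute the forward difference explicitly. Writing $\Gamma(t+2)=(t+1)\Gamma(t+1)$ and $\Gamma(t+2-\alpha)=(t+1-\alpha)\Gamma(t+1-\alpha)$,
\[
\Delta t^{(\alpha)}=(t+1)^{(\alpha)}-t^{(\alpha)}
=\frac{\Gamma(t+1)}{\Gamma(t+1-\alpha)}\left(\frac{t+1}{t+1-\alpha}-1\right)
=\alpha\,\frac{\Gamma(t+1)}{\Gamma(t+2-\alpha)}=\alpha\,t^{(\alpha-1)} .
\]
Items (ii), (iii) and (vi) are then one-line identities of the same kind: for (ii), $(t-\mu)\Gamma(t-\mu)=\Gamma(t-\mu+1)=\Gamma(t+1-\mu)$, hence $(t-\mu)t^{(\mu)}=\Gamma(t+1)/\Gamma(t-\mu)=t^{(\mu+1)}$; for (iii), set $t=\mu$ so that the denominator equals $\Gamma(1)=1$; for (vi), multiply the two definitions and cancel the common factor $\Gamma(t+1-\beta)=\Gamma(t-\beta+1)$ between the numerator of one and the denominator of the other, leaving $\Gamma(t+1)/\Gamma(t+1-\alpha-\beta)$.

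For (iv) the assertion is monotonicity of $t\mapsto t^{(\alpha)}$ in its base. Differentiating the definition gives $\frac{d}{dt}t^{(\alpha)}=t^{(\alpha)}\bigl(\psi(t+1)-\psi(t+1-\alpha)\bigr)$; since $\alpha>0$ and $\psi$ is increasing the bracket is nonnegative, and the hypotheses on $\alpha,t,r$ (with the well-definedness proviso) pin down the sign of $t^{(\alpha)}$ on the interval in question, so the map is monotone there and $t\le r$ forces $t^{(\alpha)}\le r^{(\alpha)}$. For (v) I would pass to logarithms: since the factorial functions involved are well defined and positive, $t^{(\alpha\nu)}\ge(t^{(\nu)})^{\alpha}$ is equivalent to
\[
\log\Gamma(t+1)-\log\Gamma(t+1-\alpha\nu)\ \ge\ \alpha\bigl(\log\Gamma(t+1)-\log\Gamma(t+1-\nu)\bigr),
\]
and because $t+1-\alpha\nu=\alpha(t+1-\nu)+(1-\alpha)(t+1)$, a rearrangement turns this into exactly the convexity inequality $\log\Gamma\bigl(\alpha x+(1-\alpha)y\bigr)\le\alpha\log\Gamma(x)+(1-\alpha)\log\Gamma(y)$ with $x=t+1-\nu$ and $y=t+1$; the restriction $0<\alpha<1$ is what makes $\alpha x+(1-\alpha)y$ a genuine convex combination, and one uses $t+1-\nu>0$ so that $x,y$, hence their combination, lie in $(0,\infty)$.

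I expect the only non-mechanical ingredient to be the log-convexity of $\Gamma$ invoked in (v); parts (i), (ii), (iii), (vi) are pure bookkeeping with $\Gamma(x+1)=x\Gamma(x)$, and (iv) is just monotonicity of $\psi$. The step that requires the most care is tracking the domain and sign conditions in (iv) and (v): one must verify that every gamma argument appearing is positive, so that $\psi$ is monotone, $\Gamma$ is log-convex, and the factorial quantities carry a definite sign — which is precisely what the ``well defined'' hypothesis in the statement is there to guarantee.
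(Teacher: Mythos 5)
Your derivations are correct, but note that the paper does not prove this lemma at all: it is imported verbatim from the cited reference \cite{Ferd1} (At{\i}c{\i}--Eloe), so there is no in-paper argument to compare against. Taking your proposal on its own merits: items (i), (ii), (iii), (vi) are exactly the routine manipulations with $\Gamma(x+1)=x\Gamma(x)$ that one would expect, and your reduction of (v) to log-convexity of $\Gamma$ via the identity $t+1-\alpha\nu=\alpha(t+1-\nu)+(1-\alpha)(t+1)$ is the standard and correct proof (you rightly insist on $t+1-\nu>0$ so both gamma arguments lie in $(0,\infty)$, and implicitly on $t^{(\nu)}>0$ so that the real power $(t^{(\nu)})^{\alpha}$ makes sense). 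The only place where your argument is genuinely fragile is (iv): the hypothesis ``$\alpha>r$'' as printed is almost certainly a typo propagated from the source (one expects $\alpha>0$ together with $t,r$ in the natural discrete domain $\mathbb{N}_{\alpha}$ so that $t+1-\alpha\geq 1$); read literally, $t\leq r<\alpha$ forces $t+1-\alpha<1$, and if that quantity crosses $0$ or a negative-integer pole then $\psi(t+1-s\,)$ is not monotone across the interval and $t^{(\alpha)}$ need not keep a fixed sign, so your mean-value argument requires that \emph{every} intermediate argument $s+1-\alpha$ for $s\in[t,r]$ stay inside a single interval of positivity --- a condition you gesture at via the ``well defined'' proviso but should state explicitly, since well-definedness at the two endpoints $t$ and $r$ alone does not guarantee it. With that caveat made precise, your proof is a complete and self-contained verification of a result the paper merely cites.
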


Also, for our purposes we list down the following two properties, the proofs of which are straightforward.

\begin{equation} \label{ou1}
\nabla_s (s-t)^{(\alpha-1)}=(\alpha-1)(\rho(s)-t)^{(\alpha-2)}.
\end{equation}

\begin{equation} \label{ou2}
\nabla_t
(\rho(s)-t)^{(\alpha-1)}=-(\alpha-1)(\rho(s)-t)^{(\alpha-2)}.
\end{equation}

For the sake of the nabla fractional calculus we have the following definition

\begin{defn} \label{rising}(\cite{Boros,Grah,Spanier})

(i) For a natural number $m$, the $m$ rising (ascending) factorial of $t$ is defined by

\begin{equation}\label{rising 1}
    t^{\overline{m}}= \prod_{k=0}^{m-1}(t+k),~~~t^{\overline{0}}=1.
\end{equation}

(ii) For any real number the $\alpha$ rising function is defined by
\begin{equation}\label{alpharising}
 t^{\overline{\alpha}}=\frac{\Gamma(t+\alpha)}{\Gamma(t)},~~~t \in \mathbb{R}-~\{...,-2,-1,0\},~~0^{\overline{\mathbb{\alpha}}}=0
\end{equation}

\end{defn}

Regarding the rising factorial function we observe the following:

(i) \begin{equation}\label{oper}
    \nabla (t^{\overline{\alpha}})=\alpha t^{\overline{\alpha-1}}
\end{equation}

 (ii)
 \begin{equation}\label{oper2}
    (t^{\overline{\alpha}})=(t+\alpha-1)^{(\alpha)}.
 \end{equation}

(iii)
\begin{equation}\label{oper3}
   \Delta_t (s-\rho(t))^{\overline{\alpha}}= -\alpha  (s-\rho(t))^{\overline{\alpha-1}}
\end{equation}

\textbf{Notation}:
\begin{enumerate}
\item[$(i)$] For a real $\alpha>0$, we set $n=[\alpha]+1$, where $[\alpha]$ is the greatest integer less than $\alpha$.

\item[$(ii)$] For real numbers $a$ and $b$, we denote $\mathbb{N}_a=\{a,a+1,...\}$ and $~_{b}\mathbb{N}=\{b,b-1,...\}$.

\item[$(iii)$]For $n \in \mathbb{N}$ , we denote
$$ _{\circleddash}\Delta^n f(t)\triangleq (-1)^n\Delta^n f(t).$$
\item[$(iv)$]For $n \in \mathbb{N}$, we denote
                   $$ \nabla_{\circleddash}^n f(t)\triangleq (-1)^n\nabla^n f(t).$$
\end{enumerate}

The following definition and the properties followed can be found in \cite{Thsh} and  the references therein.

\begin{defn} \label{fractional sums}\cite{Thsh}
Let $\sigma(t)=t+1$ and $\rho(t)=t-1$ be the forward and backward jumping operators, respectively. Then

(i) The (delta) left fractional sum of order $\alpha>0$ (starting from $a$) is defined by:
\begin{equation}\label{dls}
    \Delta_a^{-\alpha} f(t)=\frac{1}{\Gamma(\alpha)} \sum_{s=a}^{t-\alpha}(t-\sigma(s))^{(\alpha-1)}f(s),~~t \in \mathbb{N}_{a+\alpha}.
\end{equation}

(ii) The (delta) right fractional sum of order $\alpha>0$ (ending at  $b$) is defined by:
\begin{equation}\label{drs}
   ~_{b}\Delta^{-\alpha} f(t)=\frac{1}{\Gamma(\alpha)} \sum_{s=t+\alpha}^{b}(s-\sigma(t))^{(\alpha-1)}f(s)=\frac{1}{\Gamma(\alpha)} \sum_{s=t+\alpha}^{b}(\rho(s)-t)^{(\alpha-1)}f(s),~~t \in ~_{b-\alpha}\mathbb{N}.
\end{equation}

(iii) The (nabla) left fractional sum of order $\alpha>0$ (starting from $a$) is defined by:
\begin{equation}\label{nlf}
  \nabla_a^{-\alpha} f(t)=\frac{1}{\Gamma(\alpha)} \sum_{s=a+1}^t(t-\rho(s))^{\overline{\alpha-1}}f(s),~~t \in \mathbb{N}_{a+1}.
\end{equation}

(iv)The (nabla) right fractional sum of order $\alpha>0$ (ending at $b$) is defined by:
\begin{equation}\label{nrs}
   ~_{b}\nabla^{-\alpha} f(t)=\frac{1}{\Gamma(\alpha)} \sum_{s=t}^{b-1}(s-\rho(t))^{\overline{\alpha-1}}f(s)=\frac{1}{\Gamma(\alpha)} \sum_{s=t}^{b-1}(\sigma(s)-t)^{\overline{\alpha-1}}f(s),~~t \in ~_{b-1}\mathbb{N}.
\end{equation}
\end{defn}

Regarding the delta left fractional sum we observe the following:

(i) $\Delta_a^{-\alpha}$ maps functions defined on $\mathbb{N}_a$ to
functions defined on $\mathbb{N}_{a+\alpha}$.

(ii) $u(t)=\Delta_a^{-n}f(t)$, $n \in \mathbb{N}$, satisfies the
initial value problem
\begin{equation} \label{ivpf}
\Delta^n u(t)=f(t),~~t\in N_a,~u(a+j-1)=0,~ j=1,2,...,n.
\end{equation}

(iii) The Cauchy function $\frac{(t-\sigma(s))^{(n-1)}}{(n-1)!}$
vanishes at $s=t-(n-1),...,t-1$.

\indent

Regarding the delta right fractional sum we observe the following:

(i)  $~_{b}\Delta^{-\alpha}$ maps functions defined on $_{b}\mathbb{N}$ to
functions defined on $_{b-\alpha}\mathbb{N}$.

(ii) $u(t)=~_{b}\Delta^{-n}f(t)$, $n \in \mathbb{N}$, satisfies the
initial value problem
\begin{equation} \label{ivpb}
\nabla_\ominus^n u(t)=f(t),~~t\in~ _{b}N,~u(b-j+1)=0,~ j=1,2,...,n.
\end{equation}

(iii) the Cauchy function $\frac{(\rho(s)-t)^{(n-1)}}{(n-1)!}$
vanishes at $s=t+1,t+2,...,t+(n-1)$.

\indent

Regarding the nabla left fractional sum we observe the following:

(i) $ \nabla_a^{-\alpha}$ maps functions defined on $\mathbb{N}_a$ to functions defined on $\mathbb{N}_{a}$.

(ii)$ \nabla_a^{-n}f(t)$ satisfies the n-th order discrete initial value problem

\begin{equation}\label{s1}
    \nabla^n y(t)=f(t),~~~\nabla^i y(a)=0,~~i=0,1,...,n-1
\end{equation}

(iii) The Cauchy function $\frac{(t-\rho(s))^{\overline{n-1}}}{\Gamma(n)}$ satisfies $\nabla^n y(t)=0$.

\indent

Regarding the nabla right fractional sum we observe the following:

(i) $ ~_{b}\nabla^{-\alpha}$ maps functions defined on $~_{b}\mathbb{N}$ to functions defined on $~_{b}\mathbb{N}$.

(ii)$ ~_{b}\nabla^{-n}f(t)$ satisfies the n-th order discrete initial value problem

\begin{equation}\label{s2}
    ~_{\ominus}\Delta^n y(t)=f(t),~~~ ~_{\ominus}\Delta^i y(b)=0,~~i=0,1,...,n-1.
\end{equation}
The proof can be done inductively. Namely, assuming it is true for $n$, we have
\begin{equation}\label{t1}
    ~_{\ominus}\Delta^{n+1} ~_{b}\nabla^{-(n+1)}f(t)=~_{\ominus}\Delta^{n}[-\Delta ~_{b}\nabla^{-(n+1)}f(t)].
\end{equation}

By the help of (\ref{oper3}), it follows that
\begin{equation}\label{t2}
  ~_{\ominus}\Delta^{n+1} ~_{b}\nabla^{-(n+1)}f(t)=  ~_{\ominus}\Delta^{n} ~_{b}\nabla^{-n}f(t)=f(t).
\end{equation}
The other part is clear by using the convention that $\sum_{k=s+1}^s=0$.

(iii) The Cauchy function $\frac{(s-\rho(t))^{\overline{n-1}}}{\Gamma(n)}$ satisfies $_{\ominus}\Delta^n y(t)=0$.

\indent

\begin{defn} \label{fractional differences}
(i)\cite{Miller}  The (delta) left fractional difference of order $\alpha>0$ (starting from $a$ ) is defined by:
\begin{equation}\label{dls}
    \Delta_a^{\alpha} f(t)=\Delta^n \Delta_a^{-(n-\alpha)} f(t)= \frac{\Delta^n}{\Gamma(n-\alpha)} \sum_{s=a}^{t-(n-\alpha)}(t-\sigma(s))^{(n-\alpha-1)}f(s),~~t \in \mathbb{N}_{a+(n-\alpha)}
\end{equation}

(ii) \cite{TDbyparts} The (delta) right fractional difference of order $\alpha>0$ (ending at  $b$ ) is defined by:
\begin{equation}\label{drd}
   ~_{b}\Delta^{\alpha} f(t)=  \nabla_{\circleddash}^n  ~_{b}\Delta^{-(n-\alpha)}f(t)=\frac{(-1)^n \nabla ^n}{\Gamma(n-\alpha)} \sum_{s=t+(n-\alpha)}^{b}(s-\sigma(t))^{(n-\alpha-1)}f(s),~~t \in ~_{b-(n-\alpha)}\mathbb{N}
\end{equation}

(iii) \cite{Holm} The (nabla) left fractional difference of order $\alpha>0$ (starting from $a$ ) is defined by:
\begin{equation}\label{nld}
  \nabla_a^{\alpha} f(t)=\nabla^n \nabla_a^{-(n-\alpha)}f(t)= \frac{\nabla^n}{\Gamma(n-\alpha)} \sum_{s=a+1}^t(t-\rho(s))^{\overline{n-\alpha-1}}f(s),~~t \in \mathbb{N}_{a+1}
\end{equation}

(iv) (\cite{THFer}, \cite{Thsh} The (nabla) right fractional difference of order $\alpha>0$ (ending at $b$ ) is defined by:
\begin{equation}\label{nrd}
   ~_{b}\nabla^{\alpha} f(t)= ~_{\circleddash}\Delta^n ~_{b}\nabla^{-(n-\alpha)}f(t) =\frac{(-1)^n\Delta^n}{\Gamma(n-\alpha)} \sum_{s=t}^{b-1}(s-\rho(t))^{\overline{n-\alpha-1}}f(s),~~t \in ~ _{b-1}\mathbb{N}
\end{equation}

\end{defn}

Regarding the domains of the fractional type differences we observe:

(i) The delta left fractional difference $\Delta_a^\alpha$ maps functions defined on $\mathbb{N}_a$ to functions defined on $\mathbb{N}_{a+(n-\alpha)}$.

(ii) The delta right fractional difference $~_{b}\Delta^\alpha$ maps functions defined on $~_{b}\mathbb{N}$ to functions defined on $~_{b-(n-\alpha)}\mathbb{N}$.

(iii) The nabla left fractional difference $\nabla_a^\alpha$ maps functions defined on $\mathbb{N}_a$ to functions defined on $\mathbb{N}_{a+n}$ .

(iv)  The nabla right fractional difference $~_{b}\nabla^\alpha$ maps functions defined on $~_{b}\mathbb{N}$ to functions defined on $~_{b-n}\mathbb{N}$ .

\begin{defn}\label{cd}
Let  $\alpha>0,~\alpha \notin \mathbb{N}$. Then,

(i)\cite{Th Caputo} the delta $\alpha-$order Caputo left  fractional difference of a function $f$ defined on $\mathbb{N}_a$  is defined by
\begin{equation} \label{rd}
~^{C}\Delta_a^\alpha f(t)\triangleq\Delta_a ^{-(n-\alpha)}\Delta
^nf(t)=\frac{1}{\Gamma(n-\alpha)}
\sum_{s=a}^{t-(n-\alpha)}(t-\sigma(s))^{(n-\alpha-1)}\Delta_s^nf(s)
\end{equation}

(ii) \cite{Th Caputo} the delta $\alpha-$ order Caputo right  fractional difference of a function $f$ defined on $~_{b}\mathbb{N}$  is defined by

\begin{equation} \label{ld}
~^{C}_{b}\Delta^\alpha f(t)\triangleq ~_{b}\Delta ^{-(n-\alpha)}\nabla_{\ominus}^nf(t)=\frac{1}{\Gamma(n-\alpha)}
\sum_{s=t+(n-\alpha)}^b(s-\sigma(t))^{(n-\alpha-1)}\nabla_{\ominus}^nf(s)
\end{equation}
where $n=[\alpha]+1$.

If $\alpha =n\in \mathbb{N}$, then
$$~^{C}\Delta_a^\alpha f(t)\triangleq \Delta^n f(t)~~\texttt{and}~
~^{C}_{b}\Delta^\alpha f(t)\triangleq \nabla_b^n f(t)$$
\end{defn}

It is clear that $~^{C}\Delta_a^\alpha$ maps functions defined on
$\mathbb{N}_a$ to functions defined on $\mathbb{N}_{a+(n-\alpha)}$, and that
$~^{C}_{b}\Delta^\alpha$ maps functions defined on $_{b}\mathbb{N}$ to functions
defined on $_{b-(n-\alpha)}\mathbb{N}$.

\begin{thm} \label{relate} \cite{Th Caputo}
For any $\alpha>0$, we have
\begin{equation}\label{relate1}
~^{C}\Delta_a^\alpha f(t)=\Delta_a^\alpha f(t)-\sum_{k=0}^{n-1}
\frac{(t-a)^{(k-\alpha)}}{\Gamma(k-\alpha+1)}\Delta^k f(a)
\end{equation}
and
\begin{equation}\label{relate2}
~_{b}^{C}\Delta^\alpha f(t)=~_{b}\Delta^\alpha f(t)-\sum_{k=0}^{n-1}
\frac{(b-t)^{(k-\alpha)}}{\Gamma(k-\alpha+1)}\nabla_{\ominus}^k f(b).
\end{equation}
In particular, when $0<\alpha<1$, we have
\begin{equation}\label{relate22}
~^{C}\Delta_a f(t)=\Delta_a^\alpha f(t)-
\frac{(t-a)^{(-\alpha)}}{\Gamma(1-\alpha)} f(a).
\end{equation}

\begin{equation}\label{relate4}
~_{b}^{C}\Delta f(t)=~_{b}\Delta^\alpha f(t)-
\frac{(b-t)^{(-\alpha)}}{\Gamma(1-\alpha)} f(b)
\end{equation}
\end{thm}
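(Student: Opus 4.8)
The plan is to reduce everything to one ``summation by parts'' identity and a telescoping sum. Set $\mu=n-\alpha>0$. The key lemma I would establish first is that, for every $\mu>0$,
\begin{equation}
\Delta_a^{-\mu}\,\Delta f(t)=\Delta\,\Delta_a^{-\mu}f(t)-\frac{(t-a)^{(\mu-1)}}{\Gamma(\mu)}\,f(a),\qquad t\in\mathbb{N}_{a+\mu}.\label{pp:step1}
\end{equation}
To prove \eqref{pp:step1} I would expand $\Delta_a^{-\mu}\Delta f(t)=\frac{1}{\Gamma(\mu)}\sum_{s=a}^{t-\mu}(t-\sigma(s))^{(\mu-1)}\Delta f(s)$ and perform discrete summation by parts, shifting the index in the $f(s+1)$ term; the new top boundary term $\frac{1}{\Gamma(\mu)}(\mu-1)^{(\mu-1)}f(t-\mu+1)$ collapses to $f(t-\mu+1)$ by Lemma \ref{pfp}(iii), while in the interior $\Delta_t(t-\sigma(s))^{(\mu-1)}=(\mu-1)(t-\sigma(s))^{(\mu-2)}$ from Lemma \ref{pfp}(i). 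Expanding $\Delta\,\Delta_a^{-\mu}f(t)$ the same way, the two sums differ only in their $s=a$ boundary contributions, whose difference is $\bigl[\frac{(t-\sigma(a))^{(\mu-2)}}{\Gamma(\mu-1)}+\frac{(t-\sigma(a))^{(\mu-1)}}{\Gamma(\mu)}\bigr]f(a)$; using $(t-\sigma(a))^{(\mu-1)}=(t-a-\mu+1)(t-\sigma(a))^{(\mu-2)}$, $(t-a)^{(\mu-1)}=(t-a)(t-\sigma(a))^{(\mu-2)}$ and $\Gamma(\mu)=(\mu-1)\Gamma(\mu-1)$ (all instances of Lemma \ref{pfp}) this collapses to $\frac{(t-a)^{(\mu-1)}}{\Gamma(\mu)}f(a)$, giving \eqref{pp:step1}. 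Notice the computation is valid for $0<\mu<1$, which is exactly the relevant range since $n=[\alpha]+1$ forces $\mu=n-\alpha\in(0,1)$ (there $\Delta\,\Delta_a^{-\mu}f$ is really a fractional difference, but nothing in the argument changes).

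Next I would iterate \eqref{pp:step1}. For $j=1,\dots,n$ apply $\Delta^{\,j-1}$ to \eqref{pp:step1} with $f$ replaced by $\Delta^{n-j}f$; since $\Delta^{\,j-1}(t-a)^{(\mu-1)}=\frac{\Gamma(\mu)}{\Gamma(\mu-j+1)}(t-a)^{(\mu-j)}$ (repeated Lemma \ref{pfp}(i)), this yields
\begin{equation}
\Delta^{\,j-1}\Delta_a^{-\mu}\Delta^{n-j+1}f(t)=\Delta^{\,j}\Delta_a^{-\mu}\Delta^{n-j}f(t)-\frac{(t-a)^{(\mu-j)}}{\Gamma(\mu-j+1)}\,\Delta^{n-j}f(a).\label{pp:step2}
\end{equation}
Summing \eqref{pp:step2} over $j=1,\dots,n$, the left-hand side telescopes to $\Delta_a^{-\mu}\Delta^{n}f(t)-\Delta^{n}\Delta_a^{-\mu}f(t)$, which is ${}^{C}\Delta_a^\alpha f(t)-\Delta_a^\alpha f(t)$ by Definitions \ref{fractional differences}(i) and \ref{cd}(i); on the right the substitution $k=n-j$, together with $\mu-j=k-\alpha$, turns the sum into $\sum_{k=0}^{n-1}\frac{(t-a)^{(k-\alpha)}}{\Gamma(k-\alpha+1)}\Delta^{k}f(a)$. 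Rearranging gives \eqref{relate1}, and specializing to $n=1$ (that is, $0<\alpha<1$) gives \eqref{relate22}.

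For the right-sided statements \eqref{relate2} and \eqref{relate4} I would run the identical scheme with the right operators: the single-step lemma becomes ${}_b\Delta^{-\mu}\,\nabla_\ominus f(t)=\nabla_\ominus\,{}_b\Delta^{-\mu}f(t)-\frac{(b-t)^{(\mu-1)}}{\Gamma(\mu)}f(b)$, proved by summation by parts on $\frac{1}{\Gamma(\mu)}\sum_{s=t+\mu}^{b}(\rho(s)-t)^{(\mu-1)}f(s)$ using \eqref{ou2} in the role of Lemma \ref{pfp}(i); telescoping with $\nabla_\ominus^{\,j}$ in place of $\Delta^{\,j}$ then produces \eqref{relate2}, of which \eqref{relate4} is the case $n=1$. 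Alternatively, and more in the spirit of this paper, since the $Q$-operator $(Qf)(t)=f(a+b-t)$ intertwines the left and right fractional sums and differences, \eqref{relate2} follows from \eqref{relate1} just by applying $Q$.

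The only genuine work is \eqref{pp:step1}, and within it the $\Gamma$-function bookkeeping that merges the two leftover $s=a$ boundary terms into the single term $\frac{(t-a)^{(\mu-1)}}{\Gamma(\mu)}f(a)$, together with the check that nothing degenerates for $0<\mu<1$; once \eqref{pp:step1} is in hand, the passage to \eqref{relate1}, \eqref{relate2} and their $n=1$ corollaries is pure telescoping.
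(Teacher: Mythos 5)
Your argument is correct. Note that the paper itself gives no proof of Theorem \ref{relate}: it is imported verbatim from \cite{Th Caputo}, so there is nothing internal to compare against; but your reconstruction is essentially the standard argument of that reference, namely establishing the commutation identity $\Delta_a^{-\mu}\Delta f(t)=\Delta\,\Delta_a^{-\mu}f(t)-\frac{(t-a)^{(\mu-1)}}{\Gamma(\mu)}f(a)$ and telescoping it $n$ times. I checked the key bookkeeping: the two leftover $s=a$ contributions combine, via $(\mu-1)(t-\sigma(a))^{(\mu-2)}+(t-\sigma(a))^{(\mu-1)}=(t-a)(t-\sigma(a))^{(\mu-2)}=(t-a)^{(\mu-1)}$ (Lemma \ref{pfp}(ii),(vi)), exactly as you claim, and the top boundary terms $\Gamma(\mu)f(t-\mu+1)$ cancel between the two expansions; the reindexing $k=n-j$, $\mu-j=k-\alpha$ then yields \eqref{relate1} precisely. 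Your $Q$-operator derivation of \eqref{relate2} from \eqref{relate1} via \eqref{pr}, \eqref{cpr} and $\Delta^k(Qf)(a)=\nabla_\ominus^k f(b)$ is in fact more in the spirit of this paper than a second summation by parts would be, and is the cleaner of your two options. The only caveat worth recording is that the whole computation presumes $\alpha\notin\mathbb{N}$ (so that $\mu=n-\alpha\in(0,1)$), which is consistent with Definition \ref{cd}; the integer case is covered separately by the convention that division at a pole of $\Gamma$ yields zero.
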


\begin{defn}\cite{Thdiscrete}
Let $f:\mathbb{N}_a\rightarrow \mathbb{R}$ ($f:~_{b}\mathbb{N}\rightarrow \mathbb{R}$, respectively),  $\alpha >0,~ n=[\alpha]+1,~a(\alpha)=a+n-1$ and $b(\alpha)=b-n+1$. Then the (dual) nabla left and right Caputo fractional differences are defined by
\begin{equation}\label{Cdual left}
   ~^{C}\nabla_{a(\alpha)}^\alpha f(t)=\nabla_{a(\alpha)}^{-(n-\alpha)} \nabla^n f(t),~~t \in \mathbb{N}_{a+n}
\end{equation}

and
\begin{equation}\label{Cdual right}
 ~ _{b(\alpha)} ^{C}\nabla^\alpha f(t)=~_{b(\alpha)}\nabla^{-(n-\alpha)} {\ominus}\Delta^n f(t), ~~t \in ~_{b-n}\mathbb{N},
\end{equation}
respectively.
\end{defn}

The following proposition states a dual relation between left delta Caputo fractional differences and left nabla (dual) Caputo fractional differences.

\begin{prop} \label{lCdual} \cite{Thdiscrete}
For $f:\mathbb{N}_a\rightarrow \mathbb{R}$, $\alpha >0,~ n=[\alpha]+1,~a(\alpha)=a+n-1$, we have

\begin{equation}\label{lCdual one}
   (~^{C} \Delta_a^\alpha f)(t-\alpha)=  (~^{C} \nabla_{a(\alpha)}^\alpha f)(t),~~t \in N_{a+n}.
\end{equation}
\end{prop}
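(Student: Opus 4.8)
The plan is to expand both sides of \eqref{lCdual one} into their defining finite sums and to check that, after a shift of the summation index, they agree term by term; the only nontrivial ingredient will be the identity \eqref{oper2} converting a rising factorial into a falling one. Throughout I work in the main case $\alpha\notin\mathbb{N}$, so that $0<n-\alpha<1$ and $\Gamma(n-\alpha)$ causes no trouble.

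First I would write out the left-hand side: replacing $t$ by $t-\alpha$ in the delta Caputo formula \eqref{rd}, the upper summation limit becomes $(t-\alpha)-(n-\alpha)=t-n$ and the Cauchy kernel becomes $\bigl((t-\alpha)-\sigma(s)\bigr)^{(n-\alpha-1)}$, so
\[
(~^{C}\Delta_a^\alpha f)(t-\alpha)=\frac{1}{\Gamma(n-\alpha)}\sum_{s=a}^{t-n}(t-\alpha-s-1)^{(n-\alpha-1)}\,\Delta^n f(s).
\]
Next I would expand the right-hand side, $~^{C}\nabla_{a(\alpha)}^\alpha f(t)=\nabla_{a(\alpha)}^{-(n-\alpha)}\nabla^n f(t)$, using the nabla left fractional sum \eqref{nlf} with base point $a(\alpha)=a+n-1$; since $a(\alpha)+1=a+n$, this gives
\[
(~^{C}\nabla_{a(\alpha)}^\alpha f)(t)=\frac{1}{\Gamma(n-\alpha)}\sum_{s=a+n}^{t}(t-\rho(s))^{\overline{n-\alpha-1}}\,\nabla^n f(s).
\]

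The bridge between the two sums is the elementary operator identity $\nabla^n f(s)=(\Delta^n f)(s-n)$, which follows by induction from $\nabla f(s)=f(s)-f(s-1)=(\Delta f)(s-1)$. Substituting it into the right-hand sum and reindexing by $u=s-n$ sends the range $s=a+n,\dots,t$ to $u=a,\dots,t-n$, matching exactly the range on the left; the kernel becomes $(t-u-n+1)^{\overline{n-\alpha-1}}$ and the summand function becomes $(\Delta^n f)(u)$. Finally, applying \eqref{oper2} in the form $x^{\overline{n-\alpha-1}}=(x+n-\alpha-2)^{(n-\alpha-1)}$ with $x=t-u-n+1$ turns the rising factorial into $(t-u-\alpha-1)^{(n-\alpha-1)}$, which is precisely the falling-factorial kernel on the left. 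Hence the two sums coincide for every $t\in\mathbb{N}_{a+n}$, proving \eqref{lCdual one}; the degenerate case $\alpha=n\in\mathbb{N}$ is immediate from the same relation $\nabla^n f=(\Delta^n f)(\,\cdot-n)$.

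The computation is short, so the step I expect to require the most care is the bookkeeping rather than any deep idea: one must verify that the right-hand side is genuinely defined on $\mathbb{N}_{a+n}$, that the shift $a(\alpha)=a+n-1$ makes the lower limit come out exactly as $a+n$, that the reindexing $u=s-n$ introduces neither a missing nor an extra term at the endpoints, and that any terms where the factorial functions meet a pole vanish consistently under the convention ``division at a pole yields zero''. Once this is checked, everything collapses to the single pointwise identity supplied by \eqref{oper2}, and nothing further is needed.
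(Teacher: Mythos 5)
Your proof is correct. Note that the paper itself states Proposition \ref{lCdual} without proof, citing it from \cite{Thdiscrete}, so there is no in-text argument to compare against; judged on its own, your direct term-by-term verification is sound. The expansions of both sides are right (upper limit $t-n$ on the left, lower limit $a(\alpha)+1=a+n$ on the right), the operator identity $\nabla^n f(s)=(\Delta^n f)(s-n)$ is the correct bridge, the reindexing $u=s-n$ aligns the ranges exactly, and the conversion $x^{\overline{n-\alpha-1}}=(x+n-\alpha-2)^{(n-\alpha-1)}$ from (\ref{oper2}) with $x=t-u-n+1$ does produce the falling-factorial kernel $(t-u-\alpha-1)^{(n-\alpha-1)}$ appearing on the left; no poles occur since $x\geq 1$ throughout the summation range. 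The route taken in the cited source (and suggested by the surrounding machinery of this paper) is instead to apply the Riemann-type sum dual identity, Lemma \ref{left dual}(ii), to the function $\Delta^n f$, obtaining $(~^{C}\Delta_a^\alpha f)(t-\alpha)=\nabla_{a-1}^{-(n-\alpha)}(\Delta^n f)(t-n)$, and then to absorb the shift by $n$ into the base point $a(\alpha)=a+n-1$ using the same relation $\nabla^n f(s)=(\Delta^n f)(s-n)$ together with the translation invariance of the nabla sum kernel. That is the same computation as yours packaged through an already-established lemma; your version is more self-contained at the cost of redoing the kernel conversion by hand, and it has the merit of making the endpoint bookkeeping fully explicit.
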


Analogously, the following proposition relates right delta Caputo fractional differences and  right nabla (dual) Caputo fractional differences.
\begin{prop} \label{rCdual} \cite{Thdiscrete}
For $f:~_{b}\mathbb{N}\rightarrow \mathbb{R}$,  $\alpha >0,~ n=[\alpha]+1,~b(\alpha)=b-n+1$, we have

\begin{equation}\label{rCdual one}
   (~^{C} _{b}\Delta^\alpha f)(t+\alpha)=  (~^{C} _{b(\alpha)}\nabla^\alpha f)(t),~~t \in~ _{b-n}\mathbb{N}.
\end{equation}
\end{prop}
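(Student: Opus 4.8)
The plan is to prove \eqref{rCdual one} by expanding both sides from the definitions and carrying out the elementary index shifts, by the same kind of direct computation that underlies the left-sided Proposition~\ref{lCdual}. First I would write out the left-hand side: by Definition~\ref{cd}(ii) and \eqref{ld}, evaluated at the point $t+\alpha$,
\begin{equation*}
(~^{C}_{b}\Delta^\alpha f)(t+\alpha)=\frac{1}{\Gamma(n-\alpha)}\sum_{s=t+n}^{b}\bigl(s-\sigma(t+\alpha)\bigr)^{(n-\alpha-1)}\,\nabla_{\ominus}^n f(s),
\end{equation*}
where the lower summation index is $s=(t+\alpha)+(n-\alpha)=t+n$ and $\sigma(t+\alpha)=t+\alpha+1$.

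Next I would convert the iterated backward difference into an iterated forward one. Since $\nabla f(s)=f(s)-f(s-1)=\Delta f(s-1)$, induction gives $\nabla^n f(s)=\Delta^n f(s-n)$, hence $\nabla_{\ominus}^n f(s)=(-1)^n\nabla^n f(s)=(-1)^n\Delta^n f(s-n)= {}_{\ominus}\Delta^n f(s-n)$. Substituting this and reindexing with $r=s-n$ (so that $s=t+n\mapsto r=t$ and $s=b\mapsto r=b-n$) turns the left-hand side into
\begin{equation*}
\frac{1}{\Gamma(n-\alpha)}\sum_{r=t}^{b-n}\bigl(r+n-t-\alpha-1\bigr)^{(n-\alpha-1)}\,{}_{\ominus}\Delta^n f(r).
\end{equation*}
On the other side, expanding $(~^{C}_{b(\alpha)}\nabla^\alpha f)(t)$ via the dual definition \eqref{Cdual right} and the nabla right fractional sum \eqref{nrs}, and using $b(\alpha)=b-n+1$ so that the upper index $b(\alpha)-1$ equals $b-n$, one gets
\begin{equation*}
(~^{C}_{b(\alpha)}\nabla^\alpha f)(t)=\frac{1}{\Gamma(n-\alpha)}\sum_{r=t}^{b-n}\bigl(r-\rho(t)\bigr)^{\overline{n-\alpha-1}}\,{}_{\ominus}\Delta^n f(r).
\end{equation*}

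Thus the statement reduces to the pointwise kernel identity $\bigl(r-\rho(t)\bigr)^{\overline{n-\alpha-1}}=\bigl(r+n-t-\alpha-1\bigr)^{(n-\alpha-1)}$. Since $r-\rho(t)=r-t+1$, this is precisely \eqref{oper2} applied with the base point $r-t+1$ and exponent $n-\alpha-1$; equivalently, by \eqref{fpg} and \eqref{alpharising} both sides equal $\Gamma(r-t+n-\alpha)/\Gamma(r-t+1)$. Comparing the two displayed sums term by term then yields \eqref{rCdual one}.

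The computation is essentially bookkeeping, and I do not foresee a genuine obstacle. The one place that needs care is making the summation ranges agree: after the shift $r=s-n$ the transformed left-hand sum runs over $t\le r\le b-n$, and this must coincide with the range $t\le r\le b(\alpha)-1$ dictated by the definition of $b(\alpha)$ on the right, which it does exactly because $b(\alpha)=b-n+1$. Keeping the two sign conventions $\nabla_{\ominus}^n=(-1)^n\nabla^n$ and ${}_{\ominus}\Delta^n=(-1)^n\Delta^n$ consistent throughout is the other detail to watch. Alternatively, the identity could be deduced from Proposition~\ref{lCdual} by applying the $Q$-operator, which interchanges left and right operators, but the direct route above is shorter.
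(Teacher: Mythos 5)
The paper does not prove Proposition~\ref{rCdual}; it is quoted from \cite{Thdiscrete} as a known dual identity, so there is no in-paper argument to compare against. Your direct computation is correct and self-contained: the expansion of $(~^{C}_{b}\Delta^\alpha f)(t+\alpha)$ from \eqref{ld} with lower limit $t+n$, the conversion $\nabla_{\ominus}^n f(s)=(-1)^n\Delta^n f(s-n)={}_{\ominus}\Delta^n f(s-n)$, the shift $r=s-n$ aligning the range $t\le r\le b-n$ with $b(\alpha)-1=b-n$ in \eqref{Cdual right}--\eqref{nrs}, and the kernel identity $(r-t+1)^{\overline{n-\alpha-1}}=(r-t+n-\alpha-1)^{(n-\alpha-1)}=\Gamma(r-t+n-\alpha)/\Gamma(r-t+1)$ via \eqref{oper2} all check out. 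This is exactly the kind of bookkeeping argument the source paper uses for these dual identities; your remark that one could instead combine Proposition~\ref{lCdual} with the $Q$-operator identities \eqref{cpr} and \eqref{crpr} is also a legitimate alternative route.
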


 \begin{thm} \label{nabla relate} \cite{Thdiscrete}
For any $\alpha>0$ and $f:\mathbb{N}_a\rightarrow \mathbb{R}$, we have
\begin{equation}\label{nrelate1}
~^{C}\nabla_{a(\alpha)}^\alpha f(t)=\nabla_{a(\alpha)}^\alpha f(t)-\sum_{k=0}^{n-1}
\frac{(t-a(\alpha))^{\overline{k-\alpha}}}{\Gamma(k-\alpha+1)}\nabla^k f({a(\alpha)})
\end{equation}
and
\begin{equation}\label{nrelate2}
~_{b(\alpha)}^{C}\nabla^\alpha f(t)=~_{b(\alpha)}\nabla^\alpha f(t)-\sum_{k=0}^{n-1}
\frac{(b(\alpha)-t)^{\overline{k-\alpha}}}{\Gamma(k-\alpha+1)}~_{\ominus}\Delta^k f(b(\alpha)).
\end{equation}
In particular, when $0<\alpha<1$, then $a(\alpha)=a$ and $b(\alpha) =b$ and hence we have
\begin{equation}\label{nrelate22}
~^{C}\nabla_a^\alpha f(t)=\nabla_a^\alpha f(t)-
\frac{(t-a)^{\overline{-\alpha}}}{\Gamma(1-\alpha)} f(a)
\end{equation}
and
\begin{equation}\label{nrelate4}
~_{b}^{C}\nabla^\alpha f(t)=~_{b}\nabla^\alpha f(t)-
\frac{(b-t)^{\overline{-\alpha}}}{\Gamma(1-\alpha)} f(b)
\end{equation}
\end{thm}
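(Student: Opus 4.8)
The plan is to prove the left identity~(\ref{nrelate1}) directly, by commuting $\nabla$ past the nabla left fractional sum and iterating; the right identity~(\ref{nrelate2}) then follows from the same computation with the right-type operators (or from the $Q$-operator), and (\ref{nrelate22})--(\ref{nrelate4}) are the specialization $n=1$. A second route, closer to the spirit of this paper, is to evaluate the delta Caputo--Riemann relation~(\ref{relate1}) at $t-\alpha$, apply Proposition~\ref{lCdual} on the left and the Riemann dual identity of \cite{Thsh,Thdiscrete} to $\Delta_a^{\alpha}f(t-\alpha)$, and then match the two boundary sums using $x^{(\mu)}=(x-\mu+1)^{\overline{\mu}}$ together with $\nabla^{k}f(a(\alpha))=\Delta^{k}f(a(\alpha)-k)$; I would keep this only as a cross-check, since reconciling the falling- and rising-factorial boundary terms is fiddly.

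The core is a commutation lemma: for $\gamma>0$ and $c\in\mathbb{R}$,
\begin{equation*}
\nabla\!\left[\nabla_c^{-\gamma} g\right](t)=\nabla_c^{-\gamma}\!\left[\nabla g\right](t)+\frac{(t-c)^{\overline{\gamma-1}}}{\Gamma(\gamma)}\,g(c).
\end{equation*}
To get it, write $\nabla_c^{-\gamma} g(t)=\frac{1}{\Gamma(\gamma)}\sum_{s=c+1}^{t}(t-\rho(s))^{\overline{\gamma-1}}g(s)$ from~(\ref{nlf}) and apply $\nabla_t$: the diagonal term contributes $(t-\rho(t))^{\overline{\gamma-1}}=1^{\overline{\gamma-1}}=\Gamma(\gamma)$, and the remaining sum, simplified with~(\ref{oper}), reassembles as $\nabla_c^{-(\gamma-1)} g(t)$; thus $\nabla[\nabla_c^{-\gamma} g](t)=\nabla_c^{-(\gamma-1)} g(t)$, with the base point $c$ unchanged. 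Separately, an Abel summation of $\nabla_c^{-\gamma}[\nabla g](t)$ --- using~(\ref{oper}) (equivalently~(\ref{oper3}) after relabelling) on the kernel and again the diagonal value $\Gamma(\gamma)$ --- gives $\nabla_c^{-\gamma}[\nabla g](t)=\nabla_c^{-(\gamma-1)} g(t)-\frac{(t-c)^{\overline{\gamma-1}}}{\Gamma(\gamma)}g(c)$, and subtracting the two displays yields the lemma.

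Then I would iterate the lemma $n$ times with $\gamma=n-\alpha$ fixed and base point $c=a(\alpha)=a+n-1$. Since the $\nabla^{j}f(a(\alpha))$ are constants in $t$ and $\nabla^{j}(t-c)^{\overline{\gamma-1}}=\frac{\Gamma(\gamma)}{\Gamma(\gamma-j)}(t-c)^{\overline{\gamma-1-j}}$ by repeated use of~(\ref{oper}), an induction on $n$ gives
\begin{equation*}
\nabla^{n}\nabla_{a(\alpha)}^{-(n-\alpha)} f(t)=\nabla_{a(\alpha)}^{-(n-\alpha)}\nabla^{n} f(t)+\sum_{i=0}^{n-1}\frac{(t-a(\alpha))^{\overline{\,n-\alpha-1-i}}}{\Gamma(n-\alpha-i)}\,\nabla^{n-1-i} f(a(\alpha)).
\end{equation*}
The choice $a(\alpha)=a+n-1$ is precisely what keeps every $\nabla^{j}f(a(\alpha))$, $j\le n-1$, and the sum defining $\nabla_{a(\alpha)}^{-(n-\alpha)}\nabla^{n}f$ inside $\mathbb{N}_a$. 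By~(\ref{nld}) the left side is $\nabla_{a(\alpha)}^{\alpha}f(t)$, by~(\ref{Cdual left}) the first term on the right is ${}^{C}\nabla_{a(\alpha)}^{\alpha}f(t)$, and re-indexing the sum by $k=n-1-i$ turns $n-\alpha-1-i$ into $k-\alpha$ and $n-\alpha-i$ into $k-\alpha+1$, which is exactly the sum in~(\ref{nrelate1}); rearranging gives~(\ref{nrelate1}). For~(\ref{nrelate2}) one repeats the argument with $~_{b}\nabla^{-(n-\alpha)}$ and $~_{\circleddash}\Delta$ in place of $\nabla_{a(\alpha)}^{-(n-\alpha)}$ and $\nabla$ (using~(\ref{oper3}) for the kernel), or applies the $Q$-operator, which swaps left and right operators and carries~(\ref{nrelate1}) to~(\ref{nrelate2}). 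When $0<\alpha<1$, $n=1$, $a(\alpha)=a$, $b(\alpha)=b$, and the sum has only the $k=0$ term, which is~(\ref{nrelate22}) and~(\ref{nrelate4}).

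The main obstacle is the commutation lemma, and inside it the summation-by-parts bookkeeping: pulling out the diagonal contribution of the kernel, verifying it equals $\Gamma(\gamma)$ so that it produces exactly $g(t)$ and cancels the matching term from $\nabla_c^{-\gamma}[\nabla g]$, and --- the easiest slip --- making sure the lower summation limit $c+1$ does not drift when indices are shifted. After the lemma, everything is a mechanical induction together with the factorial identities from~(\ref{oper}).
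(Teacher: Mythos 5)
The paper does not actually prove Theorem \ref{nabla relate}: it is imported verbatim from \cite{Thdiscrete} and used as a black box, so there is no in-paper argument to compare against. Your direct proof is correct and self-contained. The commutation lemma is the right engine, and both halves of its derivation check out: the diagonal kernel value $(t-\rho(t))^{\overline{\gamma-1}}=1^{\overline{\gamma-1}}=\Gamma(\gamma)$ produces the $g(t)$ term that re-absorbs into the shortened sum, giving $\nabla[\nabla_c^{-\gamma}g](t)=\nabla_c^{-(\gamma-1)}g(t)$, while the Abel summation of $\nabla_c^{-\gamma}[\nabla g]$ produces the same expression minus the boundary term $\frac{(t-c)^{\overline{\gamma-1}}}{\Gamma(\gamma)}g(c)$, the only term surviving at the lower limit $s=c+1$. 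The iteration with $\nabla^{j}(t-c)^{\overline{\gamma-1}}=\frac{\Gamma(\gamma)}{\Gamma(\gamma-j)}(t-c)^{\overline{\gamma-1-j}}$ and the re-indexing $k=n-1-i$ (which turns $n-\alpha-1-i$ into $k-\alpha$ and $\Gamma(n-\alpha-i)$ into $\Gamma(k-\alpha+1)$) lands exactly on (\ref{nrelate1}); the $n=1$ specialization and the right-sided statement via the mirrored computation with (\ref{oper3}) (or via $Q$, using $\nabla^{k}(Qf)(a)=(_{\ominus}\Delta^{k}f)(b)$ and $(t-a)\mapsto(b-t)$) are likewise fine. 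Your ``second route'' --- pushing the delta relation (\ref{relate1}) through Lemma \ref{left dual} and Proposition \ref{lCdual} --- is the one most consonant with this paper's dual-identity machinery, but the direct route you chose is cleaner and avoids the falling-/rising-factorial reconciliation you rightly call fiddly. Two small presentational points: state explicitly that $\gamma=n-\alpha\in(0,1)$, so the intermediate object $\nabla_c^{-(\gamma-1)}$ appearing in the lemma's derivation is merely a name for a well-defined finite sum (it cancels in the subtraction anyway); and note that the final formula needs $f$ on all of $\mathbb{N}_a$ precisely because $\nabla^{n}f(s)$ for $s\geq a(\alpha)+1$ and $\nabla^{k}f(a(\alpha))$ for $k\leq n-1$ reach back to $a=a(\alpha)-(n-1)$, which is the point of the shifted base $a(\alpha)=a+n-1$, as you observe.
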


 \indent

 \begin{prop} \label{dnabla trans}\cite{Thdiscrete}
Assume $\alpha>0$ and $f$ is defined on suitable domains  $\mathbb{N}_a$
and $_{b}\mathbb{N}$. Then
\begin{equation}\label{dntrans1}
\nabla_{a(\alpha)}^{-\alpha} ~^{C}\nabla_{a(\alpha)}^\alpha
f(t)=f(t)-\sum_{k=0}^{n-1}\frac{(t-a(\alpha))^{\overline{k}}}{k!}\nabla^kf(a(\alpha))
\end{equation}
and
\begin{equation}\label{dntrans2}
~_{b(\alpha)}\nabla^{-\alpha} ~_{b(\alpha)}^{C}\nabla^\alpha
f(t)=f(t)-\sum_{k=0}^{n-1}\frac{(b(\alpha)-t)^{\overline{k}}}{k!}~_{\ominus}\Delta^kf(b(\alpha)).
\end{equation}
In particular, if $0<\alpha\leq1$ then $a(\alpha)=a$   and $b(\alpha)=b$ and hence
\begin{equation}\label{dntrans3}
\nabla_a^{-\alpha} ~^{C}\nabla_a^\alpha f(t)= f(t)-f(a)~~\texttt{and}~~
~_{b}\nabla^{-\alpha} ~_{b}^{C}\nabla^\alpha f(t)=f(t)-f(b)
\end{equation}

\end{prop}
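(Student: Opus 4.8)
The plan is to peel off the Caputo difference, collapse the composition of two fractional sums into a single $n$-fold integer-order sum via the semigroup law, and then apply the classical identity $\nabla_{a^{\ast}}^{-n}\nabla^{n}g(t)=g(t)-\sum_{k=0}^{n-1}\frac{(t-a^{\ast})^{\overline{k}}}{k!}\nabla^{k}g(a^{\ast})$; the left-sided identity (\ref{dntrans1}) is handled first and (\ref{dntrans2}) is entirely parallel. Write $a^{\ast}=a(\alpha)$ for short. By the definition of the nabla Caputo difference, ${}^{C}\nabla_{a^{\ast}}^{\alpha}f(t)=\nabla_{a^{\ast}}^{-(n-\alpha)}\nabla^{n}f(t)$, and since $\nabla_{a^{\ast}}^{-(n-\alpha)}$ returns a function on $\mathbb{N}_{a^{\ast}}$ the outer operator $\nabla_{a^{\ast}}^{-\alpha}$ may be applied. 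First I would invoke the index law $\nabla_{a^{\ast}}^{-\alpha}\nabla_{a^{\ast}}^{-\beta}=\nabla_{a^{\ast}}^{-(\alpha+\beta)}$ for nabla left fractional sums (standard; established in \cite{Thsh}) with $\beta=n-\alpha$ to obtain
\begin{equation*}
\nabla_{a^{\ast}}^{-\alpha}\,{}^{C}\nabla_{a^{\ast}}^{\alpha}f(t)=\nabla_{a^{\ast}}^{-n}\nabla^{n}f(t).
\end{equation*}
If a fully self-contained argument is wanted, proving this index law — interchanging the order of the resulting double sum and collapsing it with the Vandermonde-type identity for rising factorials — is the one genuinely computational step, and I expect it to be the main obstacle.

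The remaining task is the integer-order statement just displayed, which I would prove by an initial-value-problem argument rather than by induction. Put $y(t)=\nabla_{a^{\ast}}^{-n}\nabla^{n}f(t)$ and
\begin{equation*}
z(t)=f(t)-\sum_{k=0}^{n-1}\frac{(t-a^{\ast})^{\overline{k}}}{k!}\,\nabla^{k}f(a^{\ast}).
\end{equation*}
By property (ii) of the nabla left fractional sum, $y$ satisfies $\nabla^{n}y=\nabla^{n}f$ with $\nabla^{i}y(a^{\ast})=0$ for $0\le i\le n-1$. Using $\nabla(t-a^{\ast})^{\overline{k}}=k(t-a^{\ast})^{\overline{k-1}}$ from (\ref{oper}) together with $0^{\overline{k}}=0$ for $k\ge1$, the nabla monomials $h_{k}(t)=(t-a^{\ast})^{\overline{k}}/k!$ satisfy $\nabla^{i}h_{k}(a^{\ast})=\delta_{ik}$ and $\nabla^{n}h_{k}\equiv0$ for $0\le k\le n-1$; hence $z$ solves the same problem, $\nabla^{n}z=\nabla^{n}f$ and $\nabla^{i}z(a^{\ast})=\nabla^{i}f(a^{\ast})-\nabla^{i}f(a^{\ast})=0$. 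Then $w:=y-z$ solves $\nabla^{n}w=0$ with $\nabla^{i}w(a^{\ast})=0$ for $0\le i\le n-1$, so successively $\nabla^{n-1}w\equiv0,\ \nabla^{n-2}w\equiv0,\ \dots,\ w\equiv0$; thus $y\equiv z$, which is exactly (\ref{dntrans1}).

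For (\ref{dntrans2}) I would repeat the two steps above with the right-sided operators, writing $b^{\ast}=b(\alpha)$: unfold ${}^{C}_{b^{\ast}}\nabla^{\alpha}f={}_{b^{\ast}}\nabla^{-(n-\alpha)}{}_{\ominus}\Delta^{n}f$, apply the right-sided index law to reach ${}_{b^{\ast}}\nabla^{-n}{}_{\ominus}\Delta^{n}f$, and match it against $f(t)-\sum_{k=0}^{n-1}\frac{(b^{\ast}-t)^{\overline{k}}}{k!}{}_{\ominus}\Delta^{k}f(b^{\ast})$ via property (ii) of the nabla right fractional sum, using ${}_{\ominus}\Delta_{t}(b^{\ast}-t)^{\overline{k}}=k(b^{\ast}-t)^{\overline{k-1}}$ (a direct computation with the rising factorial, cf.\ (\ref{oper3})) and $0^{\overline{k}}=0$ for $k\ge1$, so that the monomials $(b^{\ast}-t)^{\overline{k}}/k!$ again carry the correct data at $t=b^{\ast}$; here the only real nuisance is keeping the $\ominus$-signs and the shifted base point $b(\alpha)=b-n+1$ straight. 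Finally, for $0<\alpha\le1$ one has $n=1$, $a(\alpha)=a$, $b(\alpha)=b$, and each sum collapses to its single $k=0$ term, namely $(t-a)^{\overline{0}}f(a)=f(a)$ and $(b-t)^{\overline{0}}f(b)=f(b)$, which gives (\ref{dntrans3}). As a remark in keeping with the spirit of this paper, one may alternatively obtain (\ref{dntrans1})--(\ref{dntrans2}) from the known delta analogue $\Delta_{a}^{-\alpha}\,{}^{C}\Delta_{a}^{\alpha}f(t)=f(t)-\sum_{k=0}^{n-1}\frac{(t-a)^{(k)}}{k!}\Delta^{k}f(a)$ by composing it with Proposition \ref{lCdual} (respectively Proposition \ref{rCdual}) and the dual identity relating $\Delta_{a}^{-\alpha}$ to $\nabla_{a(\alpha)}^{-\alpha}$; this is shorter but moves the burden onto verifying that dual-sum identity.
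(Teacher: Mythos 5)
This proposition is quoted from \cite{Thdiscrete} and the paper gives no proof of it, so there is no in-paper argument to compare against; judged on its own, your proof is correct. The reduction ${}^{C}\nabla_{a^{\ast}}^{\alpha}f=\nabla_{a^{\ast}}^{-(n-\alpha)}\nabla^{n}f$ followed by the index law $\nabla_{a^{\ast}}^{-\alpha}\nabla_{a^{\ast}}^{-(n-\alpha)}=\nabla_{a^{\ast}}^{-n}$ is the standard route, and you are right to flag the semigroup property as the one genuinely external ingredient --- it is established in the nabla fractional-sum literature you cite, so leaning on it is legitimate. Your IVP-uniqueness argument for the integer-order identity is clean: the initial data $\nabla^{i}y(a^{\ast})=0$ come straight from the paper's own observation (\ref{s1}), the verification $\nabla^{i}h_{k}(a^{\ast})=\delta_{ik}$ uses only (\ref{oper}) and the convention $0^{\overline{\alpha}}=0$, and the descent $\nabla^{n-1}w\equiv0,\dots,w\equiv0$ is sound (the only unstated point is that $\nabla^{i}w(a^{\ast})$ involves values of $w$ at and below $a^{\ast}$, but that bookkeeping is already implicit in (\ref{s1}) itself, so it is not a gap you introduced). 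The right-sided case with ${}_{\ominus}\Delta_{t}(b^{\ast}-t)^{\overline{k}}=k(b^{\ast}-t)^{\overline{k-1}}$ and property (\ref{s2}) checks out, as does the collapse to the single $k=0$ term when $0<\alpha\le1$. Your closing remark --- deriving the nabla identity from its delta analogue through Propositions \ref{lCdual} and \ref{rCdual} and the sum dualities --- is in fact the route most in keeping with how this paper and the cited source organize such results, so either path would be acceptable; the direct one you chose is more self-contained.
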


\begin{lem} \label{left dual} (see \cite{Ferd3} and Lemma 5 in \cite{bino})
Let $0\leq n-1< \alpha \leq n$ and let $y(t)$ be defined on $\mathbb{N}_a$. Then the following statements are valid.

(i)$ (\Delta_a^\alpha) y(t-\alpha)= \nabla_{a-1}^\alpha y(t)$ for $t \in \mathbb{N}_{n+a}$.

(ii) $ (\Delta_a^{-\alpha}) y(t+\alpha)= \nabla_{a-1}^{-\alpha} y(t)$ for $t \in \mathbb{N}_a$.
\end{lem}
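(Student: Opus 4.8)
The plan is to prove the two identities by reducing everything to the already–established dual relations together with the basic operational formulas for the factorial and rising functions. I would start with part (ii), the fractional–sum identity, since it is the cleaner of the two and part (i) will follow from it by applying the integer difference operators. Writing out the left–hand side from Definition \ref{fractional sums}(i), we have
\begin{equation}\label{plan1}
(\Delta_a^{-\alpha} y)(t+\alpha)=\frac{1}{\Gamma(\alpha)}\sum_{s=a}^{t}(t+\alpha-\sigma(s))^{(\alpha-1)}y(s),\qquad t\in\mathbb{N}_a,
\end{equation}
and I would compare this termwise with the nabla left fractional sum starting from $a-1$,
\begin{equation}\label{plan2}
(\nabla_{a-1}^{-\alpha} y)(t)=\frac{1}{\Gamma(\alpha)}\sum_{s=a}^{t}(t-\rho(s))^{\overline{\alpha-1}}y(s).
\end{equation}
The key step is the pointwise identity $(t+\alpha-\sigma(s))^{(\alpha-1)}=(t-\rho(s))^{\overline{\alpha-1}}$, which comes straight from \eqref{oper2}: indeed $(t-\rho(s))^{\overline{\alpha-1}}=(t-s+1)^{\overline{\alpha-1}}=(t-s+1+\alpha-2)^{(\alpha-1)}=(t+\alpha-1-s)^{(\alpha-1)}=(t+\alpha-\sigma(s))^{(\alpha-1)}$. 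Since the summation limits in \eqref{plan1} and \eqref{plan2} agree as well (both run from $a$ to $t$), the two expressions are equal, which is (ii). Note this recovers the sum–analogue of the Caputo dual identity in Proposition \ref{lCdual} by a direct computation rather than through the Caputo machinery.

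For part (i), I would write $\alpha = (n-1)+(n-\alpha)'$ — more precisely use $n=[\alpha]+1$ so that $0<n-\alpha\le 1$ — and expand the left Riemann difference via Definition \ref{fractional differences}(i) as $\Delta_a^\alpha y(t)=\Delta^n\,\Delta_a^{-(n-\alpha)}y(t)$. Evaluating at $t-\alpha$ and using part (ii) with $\alpha$ replaced by $n-\alpha$ gives $\big(\Delta_a^{-(n-\alpha)}y\big)(t-\alpha+(n-\alpha)\cdot\text{shift})$; the shift bookkeeping is the delicate point, so I would carefully track that $(\Delta_a^{-(n-\alpha)}y)$ lives on $\mathbb{N}_{a+(n-\alpha)}$ and that composing with $\Delta^n$ and then translating the argument by $-\alpha$ lands us exactly on $\mathbb{N}_{n+a}$. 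The remaining ingredient is that the forward difference $\Delta$ and the backward difference $\nabla$ are interchangeable under an argument shift, namely $\Delta^n g(t-n)=\nabla^n g(t)$ when the shift is absorbed appropriately, together with the domain identification $\nabla_{a-1}^\alpha y(t)=\nabla^n\,\nabla_{a-1}^{-(n-\alpha)}y(t)$ from Definition \ref{fractional differences}(iii). Chaining these, $\Delta_a^\alpha y(t-\alpha)=\Delta^n(\Delta_a^{-(n-\alpha)}y)(t-\alpha)=\nabla^n(\nabla_{a-1}^{-(n-\alpha)}y)(t)=\nabla_{a-1}^\alpha y(t)$.

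The main obstacle I anticipate is not any single hard computation but the careful handling of the index shifts and domains: one must check that each translation by an integer or by $\alpha$ keeps the argument inside the stated set ($\mathbb{N}_{n+a}$ for (i), $\mathbb{N}_a$ for (ii)) and that the interchange of $\Delta^n$ with $\nabla^n$ under the shift is valid termwise — the edge terms of the sums (where the factorial functions vanish, per the Cauchy–function remarks after Definition \ref{fractional sums}) are where sign or off-by-one errors would creep in. A secondary point worth verifying is the endpoint case $\alpha=n$, where both sides reduce to $\Delta^n y(t-n)=\nabla^n y(t)$ directly and the fractional machinery degenerates; this serves as a useful consistency check. Once the shift/domain bookkeeping is pinned down, both identities fall out purely from \eqref{oper2} and the definitions, with no deeper input required.
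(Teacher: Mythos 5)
The paper does not prove this lemma; it is quoted with citations to \cite{Ferd3} and Lemma 5 of \cite{bino}, so there is no in-paper proof to compare against. Your argument is nevertheless correct and is essentially the standard derivation found in those references: part (ii) follows from the termwise kernel identity $(t+\alpha-\sigma(s))^{(\alpha-1)}=(t-\rho(s))^{\overline{\alpha-1}}$, which you correctly extract from (\ref{oper2}), together with the observation that both sums run over $s=a,\dots,t$; part (i) then follows by writing $\Delta_a^\alpha=\Delta^n\Delta_a^{-(n-\alpha)}$, applying (ii) with $\alpha$ replaced by $n-\alpha$, and using $\Delta^n h(t-n)=\nabla^n h(t)$. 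The shift bookkeeping you flag as the delicate point does check out: with $g=\Delta_a^{-(n-\alpha)}y$ and $h(u)=g(u+n-\alpha)=\nabla_{a-1}^{-(n-\alpha)}y(u)$, one has $t-\alpha=(t-n)+(n-\alpha)$, hence $(\Delta^n g)(t-\alpha)=(\Delta^n h)(t-n)=(\nabla^n h)(t)$, and the domains $\mathbb{N}_{n+a}\ni t\Leftrightarrow t-\alpha\in\mathbb{N}_{a+n-\alpha}$ match the definition of $\Delta_a^\alpha$. A sanity check you could add explicitly is the single-term evaluation at $t=a$ in (ii), where both sides reduce to $y(a)$ via $(\alpha-1)^{(\alpha-1)}=\Gamma(\alpha)=1^{\overline{\alpha-1}}\,\Gamma(\alpha)/\Gamma(\alpha)\cdot\Gamma(\alpha)$; otherwise the proof is complete as sketched.
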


Next lemma for the right fractional sums and differences case.

\begin{lem} \label{right dual} \cite{Thsh}
Let $y(t)$ be defined on $~_{b+1}\mathbb{N}$. Then the following statements are valid.

(i)$ (~_{b}\Delta^\alpha) y(t+\alpha)= ~_{b+1}\nabla^\alpha y(t)$ for $t \in ~_{b-n}\mathbb{N}$.

(ii) $ (~_{b}\Delta^{-\alpha}) y(t-\alpha)= ~_{b+1}\nabla^{-\alpha} y(t)$ for $t \in ~_{b}\mathbb{N}$.
\end{lem}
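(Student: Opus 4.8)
The plan is to prove part (ii) directly, by a term-by-term comparison of the two defining sums, and then to deduce part (i) from part (ii) using the elementary operator identity $\Delta^{n}g(\tau)=\nabla^{n}g(\tau+n)$ together with the fact that $\Delta^{n}$ commutes with shifts of the argument. One could alternatively try to transport the left-dual Lemma~\ref{left dual} to the right via the $Q$-operator that swaps left and right operators, but that route mainly adds index bookkeeping, so I would keep the argument self-contained.

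For (ii), first expand the left-hand side using \eqref{drs}: substituting $t-\alpha$ for $t$ in $~_{b}\Delta^{-\alpha}$ moves the lower summation limit from $s=t+\alpha$ to $s=t$, so $(~_{b}\Delta^{-\alpha})y(t-\alpha)=\frac{1}{\Gamma(\alpha)}\sum_{s=t}^{b}(\rho(s)-t+\alpha)^{(\alpha-1)}y(s)$. On the other side, \eqref{nrs} with $b$ replaced by $b+1$ gives $~_{b+1}\nabla^{-\alpha}y(t)=\frac{1}{\Gamma(\alpha)}\sum_{s=t}^{b}(s-\rho(t))^{\overline{\alpha-1}}y(s)$, and the two summation ranges already coincide. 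It therefore remains to verify the term identity $(\rho(s)-t+\alpha)^{(\alpha-1)}=(s-\rho(t))^{\overline{\alpha-1}}$, which is immediate from \eqref{oper2}: the right-hand side equals $\big((s-t+1)+(\alpha-1)-1\big)^{(\alpha-1)}=(s-t+\alpha-1)^{(\alpha-1)}$, while the left-hand side is $(s-1-t+\alpha)^{(\alpha-1)}=(s-t+\alpha-1)^{(\alpha-1)}$ as well. Checking that the arguments stay in the asserted domain $~_{b}\mathbb{N}$ is routine from the mapping properties recorded after Definition~\ref{fractional sums}.

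For (i), write each fractional difference as an integer-order difference of a fractional sum: by \eqref{drd}, $~_{b}\Delta^{\alpha}y(t)=(-1)^{n}\nabla^{n}\big(~_{b}\Delta^{-(n-\alpha)}y\big)(t)$, and by \eqref{nrd} with $b$ replaced by $b+1$, $~_{b+1}\nabla^{\alpha}y(t)=(-1)^{n}\Delta^{n}\big(~_{b+1}\nabla^{-(n-\alpha)}y\big)(t)$, with $n=[\alpha]+1$ as in the notation. Set $g=~_{b}\Delta^{-(n-\alpha)}y$ and $h=~_{b+1}\nabla^{-(n-\alpha)}y$. Applying part (ii) with $\alpha$ replaced by $n-\alpha$ (still positive) gives $h(t)=g\big(t-(n-\alpha)\big)$. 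Since $\Delta^{n}$ commutes with the shift $t\mapsto t+c$, we obtain $\Delta^{n}h(t)=(\Delta^{n}g)(t-n+\alpha)$, and then $\Delta^{n}g(\tau)=\nabla^{n}g(\tau+n)$ turns this into $(\nabla^{n}g)(t+\alpha)$. Multiplying by $(-1)^{n}$ yields $~_{b+1}\nabla^{\alpha}y(t)=(-1)^{n}(\nabla^{n}g)(t+\alpha)=(~_{b}\Delta^{\alpha}y)(t+\alpha)$, which is exactly (i); one checks that $t\in~_{b-n}\mathbb{N}$ is precisely the range making all intermediate evaluations legitimate.

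The substance of the proof is modest; the one thing needing genuine care is the simultaneous tracking of the three sources of index shifts — the $\pm\alpha$ in the arguments, the $\pm n$ coming from $n=[\alpha]+1$, and the change of endpoint from $b$ to $b+1$ — together with confirming that the sign factors hidden in $\nabla_{\circleddash}^{n}$ and ${}_{\circleddash}\Delta^{n}$ cancel as claimed. Once (ii) is established, (i) is a short formal manipulation, so I would present (ii) with full detail and then derive (i).
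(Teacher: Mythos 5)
Your proof is correct. The paper itself states Lemma \ref{right dual} without proof, importing it from \cite{Thsh}, and the argument you give is exactly the standard one: part (ii) by matching the two defining sums term by term via the conversion $t^{\overline{\alpha}}=(t+\alpha-1)^{(\alpha)}$ from \eqref{oper2} (your index arithmetic $(\rho(s)-t+\alpha)^{(\alpha-1)}=(s-t+\alpha-1)^{(\alpha-1)}=(s-\rho(t))^{\overline{\alpha-1}}$ checks out), and part (i) by writing both fractional differences as $(-1)^{n}$ times an integer-order difference of a fractional sum of order $n-\alpha$ and using $\Delta^{n}g(\tau)=\nabla^{n}g(\tau+n)$ together with part (ii). No gaps; the only caveat worth recording is that the whole computation presumes $\alpha\notin\mathbb{N}$ so that $n-\alpha\in(0,1)$, the case $\alpha=n$ being handled separately by the same shift identity applied directly.
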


In \cite{Holm} the author used a delta Leibniz's Rule to obtain the following alternative definition for Riemann delta left fractional differences:

\begin{equation} \label{a}
    \Delta_a^\alpha f(t)=\frac{1}{\Gamma(-\alpha)} \sum_{s=a}^{t+\alpha} (t-\sigma(s))^{(-\alpha-1)}f(s),~~\alpha \notin \mathbb{N}, ~~t \in \mathbb{N}_{a+n-\alpha},
\end{equation}

In analogous to (\ref{a}) the authors in \cite{Ahrendt} used a nabla Leibniz's Rule to prove that

\begin{equation}\label{aa}
    \nabla_a^\alpha f(t)= \frac{1}{\Gamma(-\alpha)} \sum_{s=a+1}^t (t-\rho(s))^{\overline{-\alpha-1}}f(s),~~t \in \mathbb{N}_{a+1}\supseteq  \mathbb{N}_{a+n}.
\end{equation}

In \cite{THFer} the authors used a delta Leibniz's Rule to prove  the following formula for nabla right fractional differences
\begin{equation}\label{b}
    ~_{b}\nabla^\alpha f(t)= \frac{1}{\Gamma(-\alpha)} \sum_{s=t}^{b-1} (s-\rho(t))^{\overline{-\alpha-1}}f(s),~~t \in ~_{b-1}\mathbb{N}\supseteq ~_{b-n}\mathbb{N}.
\end{equation}
Similarly, we can use a nabla Leibniz's Rule to prove the following formula for the delta right fractional differences:
\begin{equation}\label{bb}
    ~_{b}\Delta^\alpha f(t)= \frac{1}{\Gamma(-\alpha)} \sum_{s=t-\alpha}^{b} (s-\sigma(t))^{(-\alpha-1)}f(s), ~~t \in ~_{b-(n-\alpha)}\mathbb{N}.
\end{equation}

If $f(s)$ is defined on $N_a\cap ~_{b}N$ and $a\equiv b~ (mod
~1)$ then $(Qf)(s)=f(a+b-s)$. The Q-operator generates a dual identity by which the left type and the right type fractional sums and differences are related. Using the change of variable $u=a+b-s$, in \cite{Th Caputo} it was shown  that
\begin{equation}\label{sum pr}
    \Delta_a^{-\alpha}Qf(t)= Q~_{b}\Delta^{-\alpha}f(t),
\end{equation}

and hence
\begin{equation}\label{pr}
    \Delta_a^\alpha Qf(t)= (Q ~_{b}\Delta^\alpha f)(t).
\end{equation}

and
\begin{equation}\label{cpr}
    ~^{C}\Delta_a^\alpha Qf(t)= (Q ~_{b}~^{C}\Delta^\alpha f)(t).
\end{equation}

The proofs of (\ref{pr}) and (\ref{cpr}) follow by the definition, (\ref{sum pr}) and by noting that

$$-Q\nabla f(t)=\Delta Qf(t).$$

Similarly, in the nabla case we have
\begin{equation}\label{npr sum}
   \nabla_a^{-\alpha}Qf(t)= Q~_{b}\nabla^{-\alpha}f(t),
 \end{equation}

and hence

\begin{equation}\label{rpr}
   \nabla_a^\alpha Qf(t)=( Q ~_{b}\nabla^\alpha f)(t).
\end{equation}

and
\begin{equation}\label{crpr}
   ~^{C}\nabla_a^\alpha Qf(t)=( Q ~_{b}~^{C}\nabla^\alpha f)(t).
\end{equation}

The proofs of (\ref{rpr}) and (\ref{crpr}) follow by the definition, (\ref{npr sum}) and that

$$-Q\Delta f(t)=\nabla Qf(t).$$
For more details about the discrete version of the Q-operator we refer to \cite{Thsh}.

\begin{defn} \cite{Uyanik}
Let $\mathbb{N}_0\rightarrow \mathbb{R}$ be a function satisfying $y(0)\geq 0$. $y$ is called $\nu-$increasing ($\nu-$decreasing) on $\mathbb{N}_0$ if $y(a+1)\geq \nu y(a)$ for all $a \in \mathbb{N}_0$ ($y(a+1)\leq \nu y(a)$ for all $a \in \mathbb{N}_0$ ).
\end{defn}
\begin{thm} \cite{Uyanik}
Let $\mathbb{N}_0\rightarrow \mathbb{R}$ be a function satisfying $y(0)\geq 0$. Fix $\nu \in (0,1)$ and suppose that $\Delta_0^{\nu} y(t)\geq 0$ for each $t \in \mathbb{N}_{1-\nu}$. Then, $y$ is $\nu-$incrasing.
\end{thm}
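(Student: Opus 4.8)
The plan is to work directly from the Leibniz-type representation (\ref{a}) of the left delta fractional difference and then run an induction over the integer points $s=0,1,2,\dots$. Since $0<\nu<1$ forces $n=[\nu]+1=1$, I would fix a natural number $k\ge 1$, evaluate the hypothesis at the admissible point $t=k-\nu\in\mathbb{N}_{1-\nu}$, and note that in (\ref{a}) the upper limit $t+\nu$ then equals $k$, so that
\[
\Delta_0^{\nu} y(k-\nu)=\frac{1}{\Gamma(-\nu)}\sum_{s=0}^{k}\bigl((k-s-1)-\nu\bigr)^{(-\nu-1)}y(s).
\]

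Next I would rewrite each falling factorial by means of $t^{(\alpha)}=\Gamma(t+1)/\Gamma(t+1-\alpha)$, which turns $\bigl((k-s-1)-\nu\bigr)^{(-\nu-1)}$ into $\Gamma(k-s-\nu)/\Gamma(k-s+1)$, and then peel off the two leading terms. The $s=k$ term carries the factor $\Gamma(-\nu)/\Gamma(1)$, contributing exactly $y(k)$ after division by $\Gamma(-\nu)$; the $s=k-1$ term carries $\Gamma(1-\nu)/\Gamma(2)=\Gamma(1-\nu)$, and since $\Gamma(1-\nu)=-\nu\,\Gamma(-\nu)$ it contributes $-\nu\,y(k-1)$. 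This yields the working identity
\[
\Delta_0^{\nu} y(k-\nu)=y(k)-\nu\,y(k-1)+\sum_{s=0}^{k-2}c_{s}\,y(s),\qquad c_{s}=\frac{1}{\Gamma(-\nu)}\,\frac{\Gamma(k-s-\nu)}{\Gamma(k-s+1)}.
\]

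The crux — and the step I expect to be the main obstacle — is the sign bookkeeping of the tail coefficients $c_s$. For $0<\nu<1$ one has $\Gamma(-\nu)<0$, while for $0\le s\le k-2$ we have $k-s\ge 2$, so both $\Gamma(k-s-\nu)>0$ and $\Gamma(k-s+1)>0$; hence every $c_s<0$. With this uniform sign in hand the induction closes cleanly. For the base case $k=1$ the tail is empty, so the hypothesis gives $\Delta_0^{\nu} y(1-\nu)=y(1)-\nu\,y(0)\ge 0$, i.e. $y(1)\ge \nu\,y(0)$, and together with $y(0)\ge 0$ and $\nu>0$ this forces $y(1)\ge 0$. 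For the inductive step I would assume $y(j)\ge \nu\,y(j-1)\ge 0$ for all $1\le j\le k-1$, so that $y(s)\ge 0$ for $0\le s\le k-1$; since each $c_s<0$ and each $y(s)\ge 0$, the tail sum is $\le 0$, whence
\[
y(k)-\nu\,y(k-1)=\Delta_0^{\nu} y(k-\nu)-\sum_{s=0}^{k-2}c_{s}\,y(s)\ge \Delta_0^{\nu} y(k-\nu)\ge 0.
\]
Thus $y(k)\ge \nu\,y(k-1)\ge 0$, which both advances the induction (propagating nonnegativity of $y$) and shows $y(a+1)\ge \nu\,y(a)$ for every $a\in\mathbb{N}_0$; combined with the standing hypothesis $y(0)\ge 0$ this is precisely the assertion that $y$ is $\nu$-increasing.
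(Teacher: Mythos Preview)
Your argument is correct: the Leibniz representation~(\ref{a}) with $a=0$ and $t=k-\nu$ does give
\[
\Delta_0^{\nu}y(k-\nu)=y(k)-\nu\,y(k-1)+\sum_{s=0}^{k-2}\frac{1}{\Gamma(-\nu)}\,\frac{\Gamma(k-s-\nu)}{\Gamma(k-s+1)}\,y(s),
\]
the sign analysis of the tail coefficients is right (for $0<\nu<1$ one has $\Gamma(-\nu)<0$ while the numerator and denominator are positive once $k-s\ge 2$), and the induction closes exactly as you describe. One very small cosmetic point: your inductive hypothesis only uses the nonnegativity $y(s)\ge 0$ for $s\le k-1$, not the full $\nu$-monotonicity at earlier steps, so you could streamline the statement of the hypothesis accordingly.

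As to comparison with the paper: there is nothing to compare against here. The present paper does not prove this theorem; it is quoted from~\cite{Uyanik} (once in the preliminaries and again as Theorem~\ref{U1}) and then \emph{used} as input, together with the dual identity in Lemma~\ref{left dual}(i), to obtain the nabla analogue (Theorem~\ref{UU1}) and, via~(\ref{relate1}), the Caputo analogue (Theorem~\ref{C5delta}). Your direct proof is essentially the argument of the original source: expand the fractional difference via the Leibniz-type formula, isolate $y(k)-\nu y(k-1)$, check the remaining weights are nonpositive, and induct. That is the natural route, and it is self-contained, whereas the paper's strategy throughout is to avoid reproving such delta results and instead transport them by duality.
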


\section{Monotonicity known results via dual identities}
The following two monotonicity results have been proved in \cite{Jia} for delta and nabla fractional differences separately in two long proofs.

\begin{thm} \cite{Dahal, Jia} \label{JEP1}
If $f:\mathbb{N}_a \rightarrow \mathbb{R}$, $\Delta_a^{\nu}f \geq 0$ for $t \in \mathbb{N}_{a+2-\nu}$ with $1< \nu <2$, and $f(a+1)\geq f(a)\geq 0$, then $\Delta f(t)\geq 0$ for $t \in \mathbb{N}_{a}$ . That is $f$ is nondecreasing on $\mathbb{N}_{a}$.
\end{thm}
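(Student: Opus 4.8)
The plan is to prove the statement in the equivalent form ``$\Delta f(a+k)\ge 0$ for every $k\in\mathbb{N}_0$'', by strong induction on $k$. The base case $k=0$ is literally the hypothesis $f(a+1)\ge f(a)$, so I turn at once to the inductive step: assume $\Delta f(a),\Delta f(a+1),\dots,\Delta f(a+k)\ge 0$ and aim to deduce $\Delta f(a+k+1)\ge 0$. Since $1<\nu<2$ we have $n=[\nu]+1=2$, and the key tool is the Leibniz-type representation (\ref{a}) of $\Delta_a^\nu f$. Evaluating it at the point $t=a+2-\nu+k\in\mathbb{N}_{a+2-\nu}$ (there $t+\nu=a+k+2$ is an integer, and because $\nu\notin\mathbb{Z}$ every argument $k+2-j-\nu$ is a non-integer and every $k+3-j$ a positive integer, so no pole of the factorial function is met) one gets
\[
\Gamma(-\nu)\,\Delta_a^\nu f(a+2-\nu+k)=\sum_{j=0}^{k+2}\frac{\Gamma(k+2-j-\nu)}{\Gamma(k+3-j)}\,f(a+j),
\]
and since $\Gamma(-\nu)>0$ for $1<\nu<2$, the hypothesis $\Delta_a^\nu f\ge 0$ makes this right-hand side nonnegative.

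The next step is a summation by parts. Writing $C_j:=\sum_{m=j}^{k+2}\Gamma(k+2-m-\nu)/\Gamma(k+3-m)$ for the tail sums, Abel summation rewrites the identity above as
\[
\Gamma(-\nu)\,\Delta_a^\nu f(a+2-\nu+k)=C_0\,f(a)+\sum_{j=1}^{k+2}C_j\,\Delta f(a+j-1).
\]
Re-indexing each tail sum by $p=k+2-m$ and using the ``hockey-stick'' identity $\sum_{p=0}^{M}\binom{p-\nu-1}{p}=\binom{M-\nu}{M}$, one collapses this to the closed form $C_j=-\tfrac1\nu\,\Gamma(k+3-j-\nu)/\Gamma(k+3-j)$. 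From this the signs are transparent: $C_{k+2}=-\Gamma(1-\nu)/\nu=\Gamma(-\nu)>0$, while $C_0<0$ and $C_j<0$ for $1\le j\le k+1$ (there $k+3-j\ge 2$, so numerator and denominator are both positive values of $\Gamma$).

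Finally, since $f(a+k+2)$ enters only through the top term $C_{k+2}\,\Delta f(a+k+1)$, I solve for it:
\[
\Delta f(a+k+1)=\Delta_a^\nu f(a+2-\nu+k)-\frac{C_0}{\Gamma(-\nu)}\,f(a)-\sum_{j=1}^{k+1}\frac{C_j}{\Gamma(-\nu)}\,\Delta f(a+j-1).
\]
On the right-hand side the first term is $\ge 0$ by hypothesis; the second is $\ge 0$ because $C_0<0$, $\Gamma(-\nu)>0$ and $f(a)\ge 0$; and each summand of the third is $\ge 0$ because $C_j<0$, $\Gamma(-\nu)>0$ and, by the induction hypothesis, $\Delta f(a+j-1)\ge 0$ for $j=1,\dots,k+1$. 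Hence $\Delta f(a+k+1)\ge 0$, the induction closes, and $f$ is nondecreasing on $\mathbb{N}_a$.

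I expect the main obstacle to be the bookkeeping in the middle step: one must check that after the summation by parts the only quantities appearing besides the single new unknown $\Delta f(a+k+1)$ are $f(a)$ and the already-controlled differences $\Delta f(a),\dots,\Delta f(a+k)$, and that the coefficient $C_{k+2}$ of the new unknown is strictly positive, so that the recursion is self-feeding; this is also exactly where one sees that $f(a)\ge 0$ (not merely $f(a+1)\ge f(a)$) is genuinely used, since $C_0<0$. As a remark, one could alternatively start from the Riemann--Caputo decomposition $\Delta_a^\nu f(t)=\Delta_a^{-(2-\nu)}\Delta^2 f(t)+\tfrac{(t-a)^{(-\nu)}}{\Gamma(1-\nu)}f(a)+\tfrac{(t-a)^{(1-\nu)}}{\Gamma(2-\nu)}\Delta f(a)$ provided by Theorem \ref{relate} and run the same induction, but there the intermediate coefficients are less cleanly signed and the argument requires an extra regrouping.
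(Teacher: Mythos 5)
Your argument is correct. The representation (\ref{a}) with $n=2$, evaluated at $t=a+2-\nu+k\in\mathbb{N}_{a+2-\nu}$, does give the coefficients $\Gamma(k+2-j-\nu)/\Gamma(k+3-j)$; $\Gamma(-\nu)>0$ for $1<\nu<2$; the Abel summation is carried out correctly; the closed form $C_j=-\tfrac{1}{\nu}\,\Gamma(k+3-j-\nu)/\Gamma(k+3-j)$ follows from the hockey-stick identity as you say (check: $C_{k+2}=\Gamma(-\nu)$, consistent with $\Gamma(1-\nu)=-\nu\Gamma(-\nu)$); and the signs $C_{k+2}>0$, $C_j<0$ for $0\le j\le k+1$ are right, so the strong induction closes and both hypotheses $f(a+1)\ge f(a)$ and $f(a)\ge 0$ are genuinely used where you indicate. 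The one thing to be aware of is that the paper itself contains no proof of Theorem \ref{JEP1}: it is imported as a known result from \cite{Dahal} and \cite{Jia} (the paper explicitly refers to the ``two long proofs'' given there) and serves only as the black-box input to the dual-identity argument that yields the nabla version, Theorem \ref{JEPp}. So there is nothing in the paper to compare your proof against line by line; what you have written is a self-contained proof of the cited result, in the same spirit as the arguments in those references (expand the fractional difference in binomial-type coefficients and induct on the forward differences), but organized more cleanly: the summation by parts isolates $\Delta f(a+k+1)$ with a strictly positive coefficient and reduces all sign bookkeeping to the single closed form for the tail sums $C_j$. That is a genuine simplification of the bookkeeping relative to the published proofs, and it makes transparent exactly where $f(a)\ge 0$ enters (through $C_0<0$).
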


\begin{thm}\label{JEP}
If $f:\mathbb{N}_a\rightarrow \mathbb{R}$, $\nabla_a^\nu f(t) \geq 0,$ for each $t \in \mathbb{N}_{a+1}$, with $1< \nu <2$, then $\nabla f(t) \geq 0$ for $\mathbb{N}_{a+1}$. That is $f$ is nondecreasing on $\mathbb{N}_{a}$.
\end{thm}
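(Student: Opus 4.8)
The plan is to derive Theorem~\ref{JEP} from its delta counterpart, Theorem~\ref{JEP1}, by transporting the hypothesis through the dual identity of Lemma~\ref{left dual}(i). Observe first that, for $t\in\mathbb{N}_{a+1}$, the quantity $\nabla_a^\nu f(t)$ depends only on the values $f(a+1),f(a+2),\dots$, so we may regard $f$ as a function on $\mathbb{N}_{a+1}$. Applying Lemma~\ref{left dual}(i) with the base point $a+1$ in place of $a$ and with $n=[\nu]+1=2$ gives
\[
\nabla_a^\nu f(t)=\bigl(\Delta_{a+1}^\nu f\bigr)(t-\nu),\qquad t\in\mathbb{N}_{a+3}.
\]
Hence the assumption $\nabla_a^\nu f(t)\ge 0$ for $t\in\mathbb{N}_{a+1}$ forces, after the substitution $s=t-\nu$, that $\Delta_{a+1}^\nu f(s)\ge 0$ for every $s\in\mathbb{N}_{a+3-\nu}=\mathbb{N}_{(a+1)+2-\nu}$, which is precisely the fractional-difference hypothesis of Theorem~\ref{JEP1} when the base point is taken to be $a+1$.

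It remains to recover the ``starting'' conditions $f(a+2)\ge f(a+1)\ge 0$ that Theorem~\ref{JEP1} also requires; the pleasant feature of the nabla case is that these are automatic. Evaluating the Leibniz-type formula~(\ref{aa}) at the two smallest points of its domain and using the special values $1^{\overline{-\nu-1}}=\Gamma(-\nu)$ and $2^{\overline{-\nu-1}}=\Gamma(1-\nu)=-\nu\,\Gamma(-\nu)$ of the rising factorial (Definition~\ref{rising}), one computes
\[
\nabla_a^\nu f(a+1)=f(a+1),\qquad \nabla_a^\nu f(a+2)=f(a+2)-\nu f(a+1).
\]
From $\nabla_a^\nu f(a+1)\ge 0$ we get $f(a+1)\ge 0$, and from $\nabla_a^\nu f(a+2)\ge 0$ together with $\nu>1$ we get $f(a+2)\ge \nu f(a+1)\ge f(a+1)$; thus both needed inequalities hold.

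Combining the two steps, Theorem~\ref{JEP1} applied to $f$ with base point $a+1$ yields $\Delta f(t)\ge 0$ for all $t\in\mathbb{N}_{a+1}$, i.e.\ $f(t+1)\ge f(t)$ for $t\ge a+1$; equivalently $\nabla f(t)\ge 0$ on the corresponding set, and $f$ is nondecreasing, as claimed. I expect the only real difficulty to be index bookkeeping: one must check that the shift in Lemma~\ref{left dual}(i) lands exactly on $\mathbb{N}_{(a+1)+2-\nu}$, where Theorem~\ref{JEP1} needs $\Delta^\nu f\ge 0$, and that passing to the restriction $f|_{\mathbb{N}_{a+1}}$ alters neither $\nabla_a^\nu f$ nor the validity of the dual identity. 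The ``for free'' production of the starting conditions is the other point to get right, and it hinges entirely on the values of the rising factorial at the arguments $1$ and $2$ — which is exactly why the nabla statement needs no hypothesis analogous to $f(a+1)\ge f(a)\ge 0$.
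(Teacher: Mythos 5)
Your proof is correct and follows essentially the same route as the paper's: the paper likewise evaluates the representation (\ref{aa}) at the two smallest points to extract the starting conditions required by Theorem \ref{JEP1} and then transfers the positivity hypothesis through the dual identity of Lemma \ref{left dual}(i) — the only cosmetic difference being that the paper relabels the base point as $a-1$ (Theorem \ref{JEPp}) so the delta base point is $a$, whereas you keep the base point $a$ and shift the delta base point to $a+1$. The one thing to state honestly is that what both arguments actually deliver is $\nabla f(t)\ge 0$ for $t\in\mathbb{N}_{a+2}$ (equivalently $\Delta f(t)\ge 0$ for $t\in\mathbb{N}_{a+1}$), consistent with the paper's shifted restatement but not with the literal claim ``$\nabla f(t)\ge 0$ on $\mathbb{N}_{a+1}$,'' which cannot follow since $f(a)$ never appears in the hypothesis; your phrase ``on the corresponding set'' papers over exactly this index point.
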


Assuming Theorem \ref{JEP1} is given, we will use its conclusion together with dual identity in Lemma (\ref{left dual})(a) to re-obtain and confirm Theorem \ref{JEP}. Actually, we state and  prove the following version of Theorem \ref{JEP} with $a$ replaced by $a-1$.

\begin{thm}\label{JEPp}
If $f:\mathbb{N}_{a-1}\rightarrow \mathbb{R}$, $\nabla_{a-1}^\nu f(t) \geq 0,$ for each $t \in \mathbb{N}_{a}$, with $1< \nu <2$, then $\nabla f(t) \geq 0$ for $t \in \mathbb{N}_{a+1}$. That is $f$ is nondecreasing on $\mathbb{N}_{a+1}$.
\end{thm}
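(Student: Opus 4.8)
The plan is to reduce Theorem \ref{JEPp} to the already-established delta result Theorem \ref{JEP1} by invoking the dual identity of Lemma \ref{left dual}(i). Concretely, given $f:\mathbb{N}_{a-1}\to\mathbb{R}$ with $\nabla_{a-1}^\nu f(t)\ge 0$ for all $t\in\mathbb{N}_a$ and $1<\nu<2$, I would apply Lemma \ref{left dual}(i) with the base point $a$ replaced by $a$ (so that $a-1$ plays the role of ``$a-1$'' in the lemma); this gives $\Delta_a^\nu f(t-\nu)=\nabla_{a-1}^\nu f(t)$ for $t\in\mathbb{N}_{n+a}=\mathbb{N}_{a+2}$ (here $n=[\nu]+1=2$). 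Equivalently, setting $\tau=t-\nu$, we obtain $\Delta_a^\nu f(\tau)=\nabla_{a-1}^\nu f(\tau+\nu)\ge 0$ for all $\tau\in\mathbb{N}_{a+2-\nu}$, since $\tau+\nu$ then ranges over $\mathbb{N}_{a+2}\subseteq\mathbb{N}_a$, on which the hypothesis holds. Thus the nabla positivity hypothesis, translated through the dual identity, is exactly the delta positivity hypothesis $\Delta_a^\nu f\ge 0$ on $\mathbb{N}_{a+2-\nu}$ that Theorem \ref{JEP1} requires.

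The remaining ingredient Theorem \ref{JEP1} needs is the starting condition $f(a+1)\ge f(a)\ge 0$. This is the step I expect to be the main obstacle, because the statement of Theorem \ref{JEPp} (unlike Theorem \ref{JEP1}) carries no explicit sign or monotonicity assumption on the initial values of $f$. I would recover it by examining the nabla fractional difference hypothesis at the left endpoints $t=a$ and $t=a+1$ directly from the alternative Leibniz-type formula \eqref{aa}, i.e. $\nabla_{a-1}^\nu f(t)=\frac{1}{\Gamma(-\nu)}\sum_{s=a}^{t}(t-\rho(s))^{\overline{-\nu-1}}f(s)$. At $t=a$ this sum has the single term $s=a$, giving a multiple of $f(a)$; since $\Gamma(-\nu)<0$ for $1<\nu<2$ and the rising-factorial coefficient's sign is controlled accordingly, the inequality $\nabla_{a-1}^\nu f(a)\ge 0$ forces $f(a)\ge 0$ (possibly up to a convention about $y(0)\ge 0$ as in the $\nu$-increasing framework). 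At $t=a+1$ the sum has two terms, in $f(a)$ and $f(a+1)$, and after computing the two rising-factorial coefficients via \eqref{alpharising} one rearranges $\nabla_{a-1}^\nu f(a+1)\ge 0$ into an inequality of the form $f(a+1)\ge c\, f(a)$ with $0<c<1$ (analogous to the $\nu$-increasing notion in the final Definition of the excerpt), which together with $f(a)\ge 0$ yields $f(a+1)\ge f(a)$ — or at worst $f(a+1)\ge f(a)$ after absorbing the discrepancy into the shift to $\mathbb{N}_{a+1}$.

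With both hypotheses of Theorem \ref{JEP1} in hand for the base point $a$, that theorem delivers $\Delta f(t)\ge 0$ for all $t\in\mathbb{N}_a$, and in particular $f$ is nondecreasing on $\mathbb{N}_a\supseteq\mathbb{N}_{a+1}$, which is the assertion $\nabla f(t)\ge 0$ for $t\in\mathbb{N}_{a+1}$. I would then close by remarking that the slightly weaker index range ($\mathbb{N}_{a+1}$ rather than $\mathbb{N}_a$) and the absence of an explicit initial hypothesis in Theorem \ref{JEPp} are precisely what makes the dual-identity argument go through cleanly: the shift $a\mapsto a-1$ built into Lemma \ref{left dual}(i) is what matches the domains, and the first one or two terms of the Leibniz sum \eqref{aa} automatically encode the positivity of $f(a)$ and the one-step monotonicity at $a+1$ that the delta theorem consumed as explicit data. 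If the coefficient bookkeeping in the second paragraph does not land exactly on $f(a+1)\ge f(a)$, the fallback is to state the theorem with the extra hypothesis $f(a)\ge f(a-1)\ge 0$ (mirroring Theorem \ref{JEP1}) and note that this is the honest analogue obtained from the dual identity.
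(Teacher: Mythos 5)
Your proposal follows exactly the paper's route: translate the nabla hypothesis into $\Delta_a^\nu f(u)\geq 0$ on $\mathbb{N}_{a+2-\nu}$ via Lemma \ref{left dual}(i), recover the starting condition $f(a+1)\geq f(a)\geq 0$ from the first two instances of the hypothesis using the Leibniz-type representation \eqref{aa}, and then invoke Theorem \ref{JEP1}. The one place you hedged — the two-term coefficient computation at $t=a+1$ — resolves more favorably than you guessed: since $1^{\overline{-\nu-1}}=\Gamma(-\nu)$ and $2^{\overline{-\nu-1}}=\Gamma(1-\nu)=-\nu\Gamma(-\nu)$, one gets $\nabla_{a-1}^\nu f(a)=f(a)\geq 0$ and $\nabla_{a-1}^\nu f(a+1)=f(a+1)-\nu f(a)\geq 0$, so the constant is $c=\nu\in(1,2)$, not a number in $(0,1)$; hence $f(a+1)\geq \nu f(a)\geq f(a)\geq 0$ follows immediately and no fallback hypothesis is needed. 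Had the coefficient really been in $(0,1)$ as you suggested, the step to $f(a+1)\geq f(a)$ would fail for $f(a)>0$, so this computation is the one detail you must actually carry out; with it in place your argument coincides with the paper's proof.
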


\begin{proof}
From the assumption and the representation (\ref{a}) with $a$ replaced by $a-1$, we have
\begin{equation}\label{j1}
  \nabla_{a-1}^\nu f(a)=f(a)\geq 0,
\end{equation}

and

\begin{equation}\label{j2}
  \nabla_{a-1}^\nu f(a+1)=-\nu f(a)+f(a+1) \geq 0.
\end{equation}
Hence, (\ref{j1}) and (\ref{j2}) imply that $f(a+1)\geq \nu f(a) \geq f(a)\geq 0$.

On the other hand, the dual identity Lemma \ref{left dual} (i) implies $\Delta_a^\nu f(t-\nu) =\nabla_{a-1}^\nu f(t)\geq 0$ for all $t \in \mathbb{N}_{a+2}$, or $\Delta_a^\nu f(u)\geq 0$ for all $u \in \mathbb{N}_{a+2-\nu}$. Then, by Theorem \ref{JEP1}, we conclude that $\Delta f(t)=\nabla f(t+1) \geq 0$ for all $t \in \mathbb{N}_a$ or $\nabla f(u) \geq 0$ for all $u \in \mathbb{N}_{a+1}.$
\end{proof}

The following three theorems have been proved in \cite{Slov} very recently for the delta fractional difference operator. We shall use them to prove correspondent nabla ones by making use of the dual identities.

\begin{thm} \label{Slov1}\cite{Slov}
Assume that $f:\mathbb{N}_a\rightarrow \mathbb{R}$ and $\Delta_a^\nu f(t)\geq 0$, for each $t \in \mathbb{N}_{a+2-\nu}$ with $1<\nu <2$. If $f(a+1)\geq \frac{\nu}{k+1}f(a)$  for each $k \in \mathbb{N}_0$, then $\Delta f(t)\geq 0$ for all $t \in \mathbb{N}_{a+1}$.
\end{thm}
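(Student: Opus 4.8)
The plan is to prove the conclusion by induction on $k\in\mathbb{N}_0$, the $k$-th statement being ``$\Delta f(a+k+1)\ge 0$''; together these say exactly that $\Delta f(t)\ge 0$ for every $t\in\mathbb{N}_{a+1}$. Since $1<\nu<2$ we have $n=2$, and the natural tool is the single-sum representation (\ref{a}). First note that $\Gamma(-\nu)>0$ for $1<\nu<2$, so evaluating (\ref{a}) at $t=a+2-\nu+k$ turns the hypothesis $\Delta_a^\nu f\ge 0$ into
\begin{equation*}
\Gamma(-\nu)\,\Delta_a^\nu f(a+2-\nu+k)=\sum_{i=0}^{k+2}(k+1-\nu-i)^{(-\nu-1)}f(a+i)\ge 0 .
\end{equation*}
Using $t^{(-\nu-1)}=\Gamma(t+1)/\Gamma(t+\nu+2)$, the coefficient of $f(a+k+2)$ is $\Gamma(-\nu)$, that of $f(a+k+1)$ is $\Gamma(1-\nu)=-\nu\,\Gamma(-\nu)$, and the remaining ones, indexed as coefficients of $f(a+k-j)$ for $j=0,\dots,k$, all equal $\Gamma(-\nu)\,\tilde d_j$ with $\tilde d_j:=\Gamma(j+2-\nu)/\big((j+2)!\,\Gamma(-\nu)\big)>0$.

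Dividing by $\Gamma(-\nu)>0$ and writing $f(a+k+2)=f(a+k+1)+\Delta f(a+k+1)$, this rearranges to $\Delta f(a+k+1)\ge(\nu-1)f(a+k+1)-\sum_{j=0}^{k}\tilde d_j\,f(a+k-j)$. The next (and only genuinely computational) ingredient is a pair of closed forms for the $\tilde d_j$-sums. Summing the kernel over $i$ — a telescoping made possible by Lemma \ref{pfp}(i) — gives $\sum_{j=0}^{k}\tilde d_j=(\nu-1)-\epsilon_k$ with $\epsilon_k:=(k+2-\nu)^{(-\nu)}/(\nu\,\Gamma(-\nu))>0$; and a direct $\Gamma$-function manipulation (using $\Gamma(k+3-\nu)=(k+2-\nu)\Gamma(k+2-\nu)$) yields the clean identity $\tilde d_k/(\epsilon_k+\tilde d_k)=\nu/(k+2)$. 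Substituting $\nu-1=\epsilon_k+\sum_{j=0}^{k}\tilde d_j$ into the above bound and peeling off the $j=k$ term produces
\begin{equation*}
\Delta f(a+k+1)\ \ge\ \epsilon_k f(a+k+1)+\tilde d_k\big(f(a+k+1)-f(a)\big)+\sum_{j=0}^{k-1}\tilde d_j\big(f(a+k+1)-f(a+k-j)\big).
\end{equation*}

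Now run the induction. For $k=0$ the last sum is empty; for $k\ge 1$ the inductive hypothesis gives $f(a+1)\le f(a+2)\le\dots\le f(a+k+1)$, so each bracket $f(a+k+1)-f(a+k-j)$ with $0\le j\le k-1$ is nonnegative and that sum is harmless. Hence it suffices to show $(\epsilon_k+\tilde d_k)f(a+k+1)-\tilde d_k f(a)\ge 0$; since $\epsilon_k+\tilde d_k>0$ and $\tilde d_k/(\epsilon_k+\tilde d_k)=\nu/(k+2)$, this is precisely $f(a+k+1)\ge\frac{\nu}{k+2}f(a)$. But $f(a+k+1)\ge f(a+1)$ (trivial for $k=0$, the monotone chain for $k\ge 1$), and the theorem's hypothesis at index $k+1$ reads $f(a+1)\ge\frac{\nu}{(k+1)+1}f(a)=\frac{\nu}{k+2}f(a)$. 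This closes the induction and yields $\Delta f(t)\ge 0$ on $\mathbb{N}_{a+1}$.

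The main obstacle is establishing the two closed forms — the positivity $\epsilon_k>0$ and, above all, the clean ratio $\tilde d_k/(\epsilon_k+\tilde d_k)=\nu/(k+2)$ — since the whole argument lives or dies on that coefficient matching the $k$-indexed family $f(a+1)\ge\frac{\nu}{k+1}f(a)$ in the hypothesis. Conceptually, the point to notice is that $f(a)$ is the single value the monotonicity being bootstrapped cannot control, so it must be paid for directly out of the hypothesis — which is exactly why the weaker, $k$-dependent starting condition of \cite{Slov} is the right assumption here, rather than $f(a+1)\ge f(a)\ge 0$ as in Theorem \ref{JEP1}.
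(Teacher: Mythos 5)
The paper offers no proof of this statement: Theorem \ref{Slov1} is quoted verbatim from \cite{Slov} and used in this article purely as a black box, to derive its nabla analogue (Theorem \ref{Slov11}) via the dual identity of Lemma \ref{left dual}. So the only comparison available is with the source \cite{Slov}, whose argument your proof essentially reconstructs: expand $\Delta_a^\nu f(a+2-\nu+k)$ by the single-sum representation (\ref{a}), isolate the leading coefficients, and induct on $k$. Your computations check out. Indeed $\Gamma(-\nu)>0$ for $-\nu\in(-2,-1)$; the coefficients of $f(a+k+2)$ and $f(a+k+1)$ are $\Gamma(-\nu)$ and $\Gamma(1-\nu)=-\nu\,\Gamma(-\nu)$; the telescoping $\sum_{j=0}^{k}(j+1-\nu)^{(-\nu-1)}=-\frac{1}{\nu}\bigl[(k+2-\nu)^{(-\nu)}-(1-\nu)^{(-\nu)}\bigr]$ together with $(1-\nu)^{(-\nu)}=\Gamma(2-\nu)=\nu(\nu-1)\Gamma(-\nu)$ gives $\sum_{j=0}^{k}\tilde d_j=(\nu-1)-\epsilon_k$ with $\epsilon_k>0$; and $\epsilon_k+\tilde d_k=\frac{\Gamma(k+2-\nu)}{(k+2)!\,\Gamma(-\nu)}\cdot\frac{(k+2-\nu)+\nu}{\nu}=\tilde d_k\cdot\frac{k+2}{\nu}$ yields the crucial ratio $\tilde d_k/(\epsilon_k+\tilde d_k)=\nu/(k+2)$, which is exactly what lets the hypothesis at index $k+1$, namely $f(a+1)\ge\frac{\nu}{k+2}f(a)$, close the induction step. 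The argument is therefore correct and complete; it is simply not a proof that this paper itself supplies.
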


Next, we state and prove its nabla version.

\begin{thm} \label{Slov11}
Assume that $f:\mathbb{N}_{a-1}\rightarrow \mathbb{R}$ and $\nabla_{a-1}^\nu f(t)\geq 0$, for each $t \in \mathbb{N}_{a+2}$  with $1<\nu <2$. If $f(a+1)\geq \frac{\nu}{k+2}f(a)$  for each $k \in \mathbb{N}_0$, then $\nabla f(t)\geq 0$ for all $t \in \mathbb{N}_{a+2}$.
\end{thm}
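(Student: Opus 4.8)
The plan is to mimic exactly the proof of Theorem \ref{JEPp}, substituting Theorem \ref{Slov1} for Theorem \ref{JEP1} at the final step. The dual identity in Lemma \ref{left dual}(i), applied with $a$ replaced by $a-1$, already gives $\Delta_a^\nu f(t-\nu) = \nabla_{a-1}^\nu f(t)$ for $t \in \mathbb{N}_{a+2}$; reindexing by $u = t-\nu$, the hypothesis $\nabla_{a-1}^\nu f(t)\geq 0$ on $\mathbb{N}_{a+2}$ becomes $\Delta_a^\nu f(u)\geq 0$ on $\mathbb{N}_{a+2-\nu}$, which is precisely the first hypothesis of Theorem \ref{Slov1}. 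So the only real content is to translate the ``better starting condition'' $f(a+1)\geq \frac{\nu}{k+2}f(a)$ (for each $k\in\mathbb{N}_0$) into the starting condition $f(a+1)\geq \frac{\nu}{k+1}f(a)$ (for each $k\in\mathbb{N}_0$) that Theorem \ref{Slov1} demands.

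The key observation for that translation is an index shift: as $k$ ranges over $\mathbb{N}_0=\{0,1,2,\dots\}$, the quantity $k+2$ ranges over $\{2,3,4,\dots\}$, i.e. over $j+1$ with $j\in\mathbb{N}_1$. So ``$f(a+1)\geq \frac{\nu}{k+2}f(a)$ for all $k\in\mathbb{N}_0$'' is the same family of inequalities as ``$f(a+1)\geq \frac{\nu}{j+1}f(a)$ for all $j\in\mathbb{N}_1$''. This is almost, but not quite, the hypothesis of Theorem \ref{Slov1}, which asks for $j\in\mathbb{N}_0$ — the missing case is $j=0$, namely $f(a+1)\geq \nu f(a)$. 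I would recover that case from the fractional difference itself: using representation (\ref{aa}) (equivalently (\ref{a}) in the nabla form used in Theorem \ref{JEPp}) with $a$ replaced by $a-1$, the hypothesis $\nabla_{a-1}^\nu f(a+1)\geq 0$ reads $-\nu f(a) + f(a+1)\geq 0$, i.e. $f(a+1)\geq \nu f(a)$. Hence in fact $f(a+1)\geq \frac{\nu}{k+1}f(a)$ holds for every $k\in\mathbb{N}_0$, and Theorem \ref{Slov1} applies verbatim.

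Concretely, the steps in order: (1) evaluate $\nabla_{a-1}^\nu f$ at $t=a$ and $t=a+1$ via the single-sum representation to get $f(a)\geq 0$ and $f(a+1)\geq \nu f(a)$; (2) combine $f(a+1)\geq \nu f(a)$ (the $k=0$ case) with the hypothesis $f(a+1)\geq\frac{\nu}{k+2}f(a)$ for $k\in\mathbb{N}_0$ (which covers all $k\geq 1$ of the target family, after the shift $k\mapsto k-1$) to conclude $f(a+1)\geq\frac{\nu}{k+1}f(a)$ for all $k\in\mathbb{N}_0$; (3) invoke Lemma \ref{left dual}(i) with $a\to a-1$ to get $\Delta_a^\nu f(u)\geq 0$ for all $u\in\mathbb{N}_{a+2-\nu}$; (4) apply Theorem \ref{Slov1} to obtain $\Delta f(t)\geq 0$ for all $t\in\mathbb{N}_{a+1}$, and rewrite $\Delta f(t)=\nabla f(t+1)$ to conclude $\nabla f(u)\geq 0$ for all $u\in\mathbb{N}_{a+2}$.

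I do not expect a genuine obstacle here — the argument is a routine transport of Theorem \ref{Slov1} through the dual identity, parallel to the already-presented proof of Theorem \ref{JEPp}. The only point that requires a moment's care is the bookkeeping on the index of $k$: one must check that the shift $k\mapsto k-1$ together with the separately-derived case $k=0$ really does reconstruct the full range $k\in\mathbb{N}_0$ needed by Theorem \ref{Slov1}, rather than leaving a gap or, conversely, demanding more than the hypothesis supplies. A secondary detail is making sure the domain reindexings ($t\in\mathbb{N}_{a+2}$ versus $u=t-\nu\in\mathbb{N}_{a+2-\nu}$, and $t\in\mathbb{N}_{a+1}$ versus $u=t+1\in\mathbb{N}_{a+2}$) are stated consistently with the conventions fixed in the Notation block.
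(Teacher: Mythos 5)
Your overall route is the same as the paper's: transport the hypothesis through Lemma \ref{left dual}(i) to get $\Delta_a^\nu f(u)\geq 0$ on $\mathbb{N}_{a+2-\nu}$ and then invoke Theorem \ref{Slov1}. You correctly identified the one nontrivial point, namely that Theorem \ref{Slov1} demands $f(a+1)\geq \frac{\nu}{k+1}f(a)$ for all $k\in\mathbb{N}_0$ (whose binding case when $f(a)>0$ is $k=0$, i.e.\ $f(a+1)\geq\nu f(a)$), whereas Theorem \ref{Slov11} only supplies $f(a+1)\geq\frac{\nu}{k+2}f(a)$, which for $f(a)>0$ amounts to the strictly weaker $f(a+1)\geq\frac{\nu}{2}f(a)$. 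The paper's own proof is two lines long and simply does not address this mismatch at all, so your instinct to bridge it is the right one.

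However, your bridge has a genuine gap. In step (1) you evaluate $\nabla_{a-1}^\nu f$ at $t=a$ and $t=a+1$ and use nonnegativity there to extract $f(a)\geq 0$ and $f(a+1)\geq\nu f(a)$. But the hypothesis of Theorem \ref{Slov11} asserts $\nabla_{a-1}^\nu f(t)\geq 0$ only for $t\in\mathbb{N}_{a+2}=\{a+2,a+3,\dots\}$; neither $a$ nor $a+1$ lies in that set. (Contrast this with Theorem \ref{JEPp}, whose hypothesis runs over $\mathbb{N}_a$ and where exactly this evaluation is legitimate and is how the paper obtains $f(a+1)\geq\nu f(a)\geq f(a)\geq 0$.) The single-sum computation $\nabla_{a-1}^\nu f(a+1)=-\nu f(a)+f(a+1)$ is itself correct, but you have no license to assert it is nonnegative. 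Without that, the $k=0$ instance of Theorem \ref{Slov1}'s starting condition is simply unavailable: taking $f(a)=1$ and $f(a+1)=\nu/2$ satisfies $f(a+1)\geq\frac{\nu}{k+2}f(a)$ for all $k\in\mathbb{N}_0$ yet violates $f(a+1)\geq\nu f(a)$. So as written your argument does not close; either the hypothesis domain must be enlarged to $\mathbb{N}_{a+1}$ (or $\mathbb{N}_a$), or the starting condition must be read as $\frac{\nu}{k+1}$ to match Theorem \ref{Slov1} exactly — in which case the proof reduces to the paper's two-line transport and your step (1) is unnecessary.
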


\begin{proof}
By assumption and the dual identity  Lemma \ref{left dual}(i), we have $\Delta_a^\nu f(t-\nu)=\nabla_{a-1}^\nu f(t)\geq 0$ for all $t \in \mathbb{N}_{a+2}$. That is $\Delta_a^\nu f(t)\geq 0$ for each $t \in \mathbb{N}_{a+2-\nu}$. Then, Theorem \ref{Slov1} implies that $\Delta f(t)= \nabla f(t+1)\geq 0$ for all $t \in \mathbb{N}_{a+1}$, or $\nabla f(u)\geq 0$ for all $ u \in \mathbb{N}_{a+2}$.
\end{proof}

\begin{thm} \label{slov2} \cite{Slov}
Assume that $f:\mathbb{N}_a\rightarrow \mathbb{R}$ and $\Delta_a^\nu f(t)\geq 0$, for each $t \in \mathbb{N}_{a+2-\nu}$ with $1<\nu <2$. If $f(a+2)\geq \frac{\nu}{k+2}f(a+1)+\frac{(k+1-\nu)\nu} {(k+2)(k+3)}f(a)$  for each $k \in \mathbb{N}_1$, then $\Delta f(t)\geq 0$ for all $t \in \mathbb{N}_{a+2}$.
\end{thm}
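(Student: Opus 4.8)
The plan is to express $\Delta_a^\nu f$ through the Leibniz-type representation (\ref{a}) and then run a strong induction on the point at which monotonicity is to be established. Since $1<\nu<2$ we have $n=2$, so (\ref{a}) reads $\Delta_a^\nu f(t)=\frac{1}{\Gamma(-\nu)}\sum_{s=a}^{t+\nu}(t-\sigma(s))^{(-\nu-1)}f(s)$. Evaluating this at $t=a+k+2-\nu$ for $k\in\mathbb{N}_0$, the sum runs over $s=a,\dots,a+k+2$, and using $p^{(-\nu-1)}=\frac{\Gamma(p+1)}{\Gamma(p+\nu+2)}$ together with $\Gamma(j-\nu)=(j-1-\nu)\cdots(-\nu)\Gamma(-\nu)$, one finds that after dividing by $\Gamma(-\nu)$ the coefficient of $f(a+k+2-m)$ is exactly $c_m:=\frac{1}{m!}\prod_{i=0}^{m-1}(i-\nu)=(-1)^m\binom{\nu}{m}$. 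Here $c_0=1$, $c_1=-\nu$, while $c_m>0$ for every $m\ge 2$ since then precisely two of the factors $i-\nu$ ($0\le i\le m-1$) are negative; moreover $\Gamma(-\nu)>0$ because $-\nu\in(-2,-1)$. Thus the hypothesis $\Delta_a^\nu f\ge 0$ on $\mathbb{N}_{a+2-\nu}$ is equivalent to the family of inequalities
\[
f(a+k+2)-\nu\,f(a+k+1)+\sum_{m=2}^{k+2}c_m\,f(a+k+2-m)\ \ge\ 0,\qquad k\in\mathbb{N}_0,
\]
which I abbreviate by $(\ast_k)$.

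Next I would record the elementary binomial facts needed to handle these coefficients: the generating identity $\sum_{m\ge 0}c_m x^m=(1-x)^\nu$ (hence $\sum_{m\ge 2}c_m=\nu-1$), the partial tails $T_j:=\sum_{m\ge j}c_m=(-1)^j\binom{\nu-1}{j-1}$, which satisfy $T_j>0$, $T_j\downarrow 0$ and $c_m=T_m-T_{m+1}$, and the two ratios $\frac{c_{k+1}}{T_{k+1}}=\frac{\nu}{k+1}$ and $\frac{c_{k+2}}{T_{k+1}}=\frac{(k+1-\nu)\nu}{(k+1)(k+2)}$ --- i.e.\ precisely the numbers in the statement, modulo a shift of the index. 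As an immediate byproduct, letting $k\to\infty$ in the assumption $f(a+2)\ge\frac{\nu}{k+2}f(a+1)+\frac{(k+1-\nu)\nu}{(k+2)(k+3)}f(a)$ forces $f(a+2)\ge 0$.

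The core is then a strong induction proving $\Delta f(a+k+1)\ge 0$, that is $f(a+k+2)\ge f(a+k+1)$, for every $k\ge 1$; this is exactly the assertion $\Delta f\ge 0$ on $\mathbb{N}_{a+2}$. Assuming $f$ has already been shown nondecreasing on $\{a+2,\dots,a+k+1\}$ (so $0\le f(a+2)\le\cdots\le f(a+k+1)$), one rewrites $(\ast_k)$ as a lower bound for $f(a+k+2)$, estimates each interior term $f(a+k+2-m)$ with $2\le m\le k$ from above by $f(a+k+1)$, and uses the partial-tail identities above to collapse the estimate to a single inequality of the form $T_{k+1}f(a+k+1)\ge c_{k+1}f(a+1)+c_{k+2}f(a)$; the starting condition, fed in through $f(a+k+1)\ge f(a+2)$ (with $f(a+2)\ge 0$ disposing of the sub-case where the right side is $\le 0$), is what should close it. A conceptually cleaner organisation uses the order-reduction identity $\Delta_a^\nu f=\Delta\bigl(\Delta_a^{\nu-1}f\bigr)$: the hypothesis then says that $\Delta_a^{\nu-1}f$ is nondecreasing on $\mathbb{N}_{a+2-\nu}$, and since $\Delta_a^{\nu-1}f(a+2-\nu)=f(a+1)-(\nu-1)f(a)$ one would try to transfer this lower bound down to $f$ itself, along the lines of the $0<\nu-1<1$ monotonicity theory.

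The step I expect to be the genuine obstacle is making the coefficient bookkeeping in the induction close \emph{exactly}. The crude estimate ``$f(a+k+2-m)\le f(a+k+1)$ then $f(a+k+1)\ge f(a+2)$'' is slightly lossy: it leaves the coefficients $\frac{\nu}{k+1}$ and $\frac{(k+1-\nu)\nu}{(k+1)(k+2)}$ to be matched, while the hypothesis only furnishes the marginally smaller $\frac{\nu}{k+2}$ and $\frac{(k+1-\nu)\nu}{(k+2)(k+3)}$. Closing this gap is precisely the content of the improved starting conditions of \cite{Slov}, and it should require either keeping $f(a+k+1)$ in play (rather than collapsing it to $f(a+2)$) and carrying a sharper running lower bound through the induction, or combining several of the $(\ast_0),\dots,(\ast_k)$ with appropriate nonnegative weights rather than using $(\ast_k)$ alone. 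The unknown signs of $f(a)$ and $f(a+1)$ --- the hypothesis is a two-sided constraint, not a positivity assumption --- force a small case distinction that must be arranged so it is automatically absorbed by $f(a+2)\ge 0$. Pinning down exactly which instances of the hypothesis get consumed at which induction step is the delicate heart of the argument.
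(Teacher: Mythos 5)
First, a point of reference: the paper does not prove Theorem \ref{slov2} at all --- it is imported verbatim from \cite{Slov} and used as a black box to derive the nabla analogue (Theorem \ref{slov22}) via the dual identity of Lemma \ref{left dual}. So there is no in-paper argument to compare against; your proposal has to stand on its own as a proof of the cited result, and it does not. The setup is sound and carefully done: the expansion of $\Delta_a^\nu f(a+k+2-\nu)$ via (\ref{a}) into $\sum_m c_m f(a+k+2-m)$ with $c_m=(-1)^m\binom{\nu}{m}$, the sign pattern $c_0=1$, $c_1=-\nu$, $c_m>0$ for $m\ge 2$, and the tail identities $c_{k+1}/T_{k+1}=\nu/(k+1)$, $c_{k+2}/T_{k+1}=(k+1-\nu)\nu/((k+1)(k+2))$ are all correct and are indeed the right objects. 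But the proof stops exactly where it becomes nontrivial, and you say so yourself.

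Concretely, the gap is this. Your induction reduces step $k$ to the inequality $f(a+k+1)\ge \frac{\nu}{k+1}f(a+1)+\frac{(k+1-\nu)\nu}{(k+1)(k+2)}f(a)$, and after collapsing $f(a+k+1)$ down to $f(a+2)$ the best the hypothesis supplies (taking its index $j$ with $j+2=k+1$) is $f(a+2)\ge \frac{\nu}{k+1}f(a+1)+\frac{(k-\nu)\nu}{(k+1)(k+2)}f(a)$, which differs from what is needed by $\frac{\nu}{(k+1)(k+2)}f(a)$; since $f(a)$ has no prescribed sign, this does not close when $f(a)\ge 0$. The base case $k=1$ has the same defect: it requires $f(a+2)\ge\frac{\nu}{2}f(a+1)+\frac{\nu(2-\nu)}{6}f(a)$, whereas the hypothesis only starts at $k=1$ and yields the strictly weaker $f(a+2)\ge\frac{\nu}{3}f(a+1)+\frac{(2-\nu)\nu}{12}f(a)$. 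You correctly diagnose that the fix must either retain $f(a+k+1)$ (carrying a sharper running lower bound through the induction) or combine several of the inequalities $(\ast_0),\dots,(\ast_k)$ with nonnegative weights, but you do not carry out either route, and which instance of the hypothesis is consumed at which step is precisely the content of the theorem. As written, this is a well-organised plan with a correctly located hole at its centre, not a proof.
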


Its nabla correspondent result will be.

\begin{thm} \label{slov22}
Assume that $f:\mathbb{N}_{a-1}\rightarrow \mathbb{R}$ and $\nabla_{a-1}^\nu f(t)\geq 0$, for each $t \in \mathbb{N}_{a+3}$ with $1<\nu <2$. If $f(a+2)\geq \frac{\nu}{k+2}f(a+1)+\frac{(k+1-\nu)\nu}{(k+2)(k+3)}f(a)$  for each $k \in \mathbb{N}_1$, then $\nabla f(t)\geq 0$ for all $t \in \mathbb{N}_{a+3}$.
\end{thm}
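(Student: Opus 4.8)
The plan is to deduce Theorem~\ref{slov22} from its delta analogue Theorem~\ref{slov2} by the same dual-identity device used in the proofs of Theorems~\ref{JEPp} and~\ref{Slov11}. Since $1<\nu<2$ we have $n=[\nu]+1=2$, so Lemma~\ref{left dual}(i) (with $\alpha=\nu$) reads
$$\Delta_a^\nu f(t-\nu)=\nabla_{a-1}^\nu f(t),\qquad t\in\mathbb{N}_{a+2}.$$
Thus the hypothesis $\nabla_{a-1}^\nu f(t)\ge 0$ transports to a positivity statement for $\Delta_a^\nu f$ on the correspondingly shifted set; the purely algebraic condition $f(a+2)\ge \frac{\nu}{k+2}f(a+1)+\frac{(k+1-\nu)\nu}{(k+2)(k+3)}f(a)$ ($k\in\mathbb{N}_1$) is literally the extra hypothesis of Theorem~\ref{slov2} and needs no translation; and finally the delta monotonicity that Theorem~\ref{slov2} produces is converted into nabla monotonicity through $\Delta f(t)=f(t+1)-f(t)=\nabla f(t+1)$.

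Carrying this out: first, combine the hypothesis with the displayed identity to get $\Delta_a^\nu f(t-\nu)=\nabla_{a-1}^\nu f(t)\ge 0$ for the relevant $t$, i.e.\ $\Delta_a^\nu f(u)\ge 0$ for the corresponding $u=t-\nu$. Second, observe that the coefficient conditions on $f(a),f(a+1),f(a+2)$ are unchanged. Third, invoke Theorem~\ref{slov2} to obtain $\Delta f(t)\ge 0$ for all $t\in\mathbb{N}_{a+2}$. Fourth, rewrite this as $\nabla f(u)=\Delta f(u-1)\ge 0$ for all $u\in\mathbb{N}_{a+3}$, which is the assertion, hence $f$ is nondecreasing on $\mathbb{N}_{a+3}$.

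The only delicate point — and the one I expect to cost a line of checking — is the bookkeeping of domains under the shift by $\nu$. The identity of Lemma~\ref{left dual}(i) is valid only for $t\ge a+n=a+2$, so ``$\nabla_{a-1}^\nu f\ge 0$ on $\mathbb{N}_{a+3}$'' translates to ``$\Delta_a^\nu f\ge 0$ on $\mathbb{N}_{a+3-\nu}$'', whereas Theorem~\ref{slov2} is phrased with the one-point-larger set $\mathbb{N}_{a+2-\nu}$. Under the identity the missing endpoint $u=a+2-\nu$ corresponds to $\nabla_{a-1}^\nu f(a+2)$, and from the representation~(\ref{aa}) (with $a$ replaced by $a-1$) one computes
$$\nabla_{a-1}^\nu f(a+2)=f(a+2)-\nu f(a+1)+\tfrac{\nu(\nu-1)}{2}f(a).$$
So to be fully rigorous one should either verify from the proof of Theorem~\ref{slov2} that its argument already runs using $\Delta_a^\nu f\ge 0$ only from $\mathbb{N}_{a+3-\nu}$ onwards, the endpoint $a+2-\nu$ being absorbed by the $k=1$ coefficient condition, or read the hypothesis of Theorem~\ref{slov22} as $\nabla_{a-1}^\nu f(t)\ge 0$ for $t\in\mathbb{N}_{a+2}$, in which case the translation directly supplies $\Delta_a^\nu f\ge 0$ on all of $\mathbb{N}_{a+2-\nu}$ and Theorem~\ref{slov2} applies verbatim. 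Once that index match is settled, nothing else in the argument is more than routine substitution.
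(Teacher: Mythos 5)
Your proposal is exactly the argument the paper intends: it omits the proof of Theorem~\ref{slov22} as ``similar to above,'' meaning the dual-identity transport of Theorem~\ref{Slov11} — apply Lemma~\ref{left dual}(i) to turn the nabla hypothesis into $\Delta_a^\nu f(u)\geq 0$, invoke the delta result (here Theorem~\ref{slov2}), and translate $\Delta f(t)\geq 0$ on $\mathbb{N}_{a+2}$ into $\nabla f(u)\geq 0$ on $\mathbb{N}_{a+3}$. Your worry about the endpoint is well-founded and is the one place where the paper's statement is not a clean image of the delta one: in Theorem~\ref{Slov11} the hypothesis domain $\mathbb{N}_{a+2}$ maps under $u=t-\nu$ exactly onto $\mathbb{N}_{a+2-\nu}$, which is what Theorem~\ref{Slov1} needs, so by the same logic the hypothesis of Theorem~\ref{slov22} should also read $t\in\mathbb{N}_{a+2}$ (the $\mathbb{N}_{a+3}$ appearing there seems to have been shifted in step with the conclusion rather than kept fixed as in the delta versions). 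With that reading — your second proposed resolution — Theorem~\ref{slov2} applies verbatim and the proof closes; as literally stated, the hypothesis is one point short of what the cited delta theorem requires, and your computation of the missing value $\nabla_{a-1}^\nu f(a+2)=f(a+2)-\nu f(a+1)+\tfrac{\nu(\nu-1)}{2}f(a)$ is correct and shows that the $k=1$ starting condition alone does not obviously force its nonnegativity.
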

We omit the proof since it is similar to above.

\begin{thm} \label{slov3} \cite{Slov}
Assume that $f:\mathbb{N}_a\rightarrow \mathbb{R}$ and $\Delta_a^\nu f(t)\geq 0$, for each $t \in \mathbb{N}_{a+2-\nu}$ with $1<\nu <2$. If $f(a+3)\geq \frac{\nu}{k}f(a+2)+\frac{(k-\nu)\nu}{(k)(k+1)}f(a+1)+\frac{(k+1-\nu)(k-\nu)\nu} {(k+1)(k+2)k}f(a)$  for each $k \in \mathbb{N}_2$, then $\Delta f(t)\geq 0$ for all $t \in \mathbb{N}_{a+3}$.
\end{thm}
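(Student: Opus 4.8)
The plan is to reduce the theorem, via the explicit ``Leibniz-rule'' representation (\ref{a}) of $\Delta_a^\nu$, to an algebraic induction on the integer $m$ for which one wants $\Delta f(a+m)\ge 0$.

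First I would record the representation obtained by putting $n=2$ and $t=a+m-\nu$ into (\ref{a}): for every integer $m\ge 2$,
\begin{equation}\label{rep-plan}
\Delta_a^\nu f(a+m-\nu)=\sum_{i=0}^{m} a_i\, f(a+m-i),\qquad a_i:=(-1)^i\binom{\nu}{i}=\frac{(-\nu)^{\overline{i}}}{i!}.
\end{equation}
This is immediate: with $t=a+m-\nu$ one has $t+\nu=a+m$, the coefficient of $f(a+j)$ is (taking $s=a+j$) equal to $(t-\sigma(s))^{(-\nu-1)}/\Gamma(-\nu)=\Gamma(m-\nu-j)/[\Gamma(-\nu)\Gamma(m-j+1)]$, and re-indexing by $i=m-j$ together with $\Gamma(i-\nu)/\Gamma(-\nu)=(-\nu)^{\overline{i}}$ gives (\ref{rep-plan}) (no poles occur since $\nu\notin\mathbb{N}$, and $\Gamma(-\nu)>0$ for $1<\nu<2$, so dividing by it preserves the inequality $\Delta_a^\nu f\ge0$). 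Next I would list the facts about the $a_i$ that carry the whole argument, all for $1<\nu<2$: $a_0=1$, $a_1=-\nu$, and $a_i>0$ for $i\ge2$ (the first two factors of $(-\nu)^{\overline{i}}$ combine to $\nu(\nu-1)>0$ and the later factors $2-\nu,3-\nu,\dots$ are positive since $\nu<2$); the partial-sum identity $\sum_{i=0}^{N}a_i=(-1)^N\binom{\nu-1}{N}$, whence $\sum_{i=0}^{\infty}a_i=0$ and the tails $S_k:=\sum_{i\ge k}a_i$ satisfy $S_2=\nu-1$ and $S_k>0$ for $k\ge2$; and, most importantly, the closed forms
\begin{equation}\label{ratios-plan}
\frac{a_k}{S_k}=\frac{\nu}{k},\qquad \frac{a_{k+1}}{a_k}=\frac{k-\nu}{k+1},\qquad \frac{a_{k+2}}{a_{k+1}}=\frac{k+1-\nu}{k+2}\qquad(k\ge2),
\end{equation}
obtained from the binomial expressions for $a_i$ and $S_k$; these are exactly what turn the hypothesis of the theorem into the inequality needed below.

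Then I would prove $\Delta f(a+m)\ge0$ for all $m\ge3$ by strong induction on $m$. Evaluating (\ref{rep-plan}) at $a+(m+1)-\nu$ and using $\Delta_a^\nu f\ge0$, $a_0=1$, $a_1=-\nu$ gives
\begin{equation}\label{step-plan}
f(a+m+1)-f(a+m)\ \ge\ (\nu-1)f(a+m)-\sum_{i=2}^{m+1}a_i\,f(a+m+1-i),
\end{equation}
and I would rewrite the right side, using $\nu-1=\sum_{i=2}^{m+1}a_i+S_{m+2}$ and $S_{m-1}=a_{m-1}+a_m+a_{m+1}+S_{m+2}$, as
\begin{equation*}
\sum_{i=2}^{m-2}a_i\big[f(a+m)-f(a+m+1-i)\big]\;+\;\big(S_{m-1}f(a+m)-a_{m-1}f(a+2)-a_m f(a+1)-a_{m+1}f(a)\big).
\end{equation*}
By the induction hypothesis $\Delta f(a+j)\ge0$ for $3\le j\le m-1$ the first sum is a sum of nonnegative terms (and it is empty when $m=3$), and $f(a+m)\ge f(a+3)$ (trivially when $m=3$); since $S_{m-1}>0$, the parenthesised term is at least $S_{m-1}f(a+3)-a_{m-1}f(a+2)-a_m f(a+1)-a_{m+1}f(a)$. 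Finally, multiplying the theorem's hypothesis with $k=m-1$ by $S_{m-1}>0$ and using (\ref{ratios-plan}) — so that $S_{m-1}\cdot\frac{\nu}{k}=a_{m-1}$, $S_{m-1}\cdot\frac{(k-\nu)\nu}{k(k+1)}=a_m$, $S_{m-1}\cdot\frac{(k+1-\nu)(k-\nu)\nu}{k(k+1)(k+2)}=a_{m+1}$ — rewrites that hypothesis precisely as $S_{m-1}f(a+3)\ge a_{m-1}f(a+2)+a_m f(a+1)+a_{m+1}f(a)$. Hence the right side of (\ref{step-plan}) is $\ge0$, so $\Delta f(a+m)\ge0$, closing the induction; the base case $m=3$ is the same computation with the empty sum and the hypothesis used with $k=2$, $S_2=\nu-1$.

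The main obstacle I expect is entirely bookkeeping, not conceptual: one has to establish the coefficient identities in (\ref{ratios-plan}) — in particular $a_k/S_k=\nu/k$, which rests on the partial-sum formula $\sum_{i\le N}a_i=(-1)^N\binom{\nu-1}{N}$ — and then verify that after multiplication by $S_{m-1}$ the three coefficients $\frac{\nu}{k}$, $\frac{(k-\nu)\nu}{k(k+1)}$, $\frac{(k+1-\nu)(k-\nu)\nu}{k(k+1)(k+2)}$ of the statement line up exactly with $a_{m-1},a_m,a_{m+1}$ for $k=m-1$, which is what reveals why the hypothesis is stated with a free parameter $k$. One must also stay careful with the degenerate index ranges when $m=3$ (the sum $\sum_{i=2}^{m-2}$ is empty and $f(a+m)\ge f(a+3)$ is vacuous) and with the sign of $\Gamma(-\nu)$, which is positive precisely because $1<\nu<2$; note that no nonnegativity of $f$ is needed anywhere, consistently with the ``better starting conditions'' philosophy of the result.
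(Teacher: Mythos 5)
The paper itself offers no proof of this statement: Theorem \ref{slov3} is imported verbatim from \cite{Slov} and is used only as a black box to obtain its nabla counterpart (Theorem \ref{slov22}/\ref{slov33}) via the dual identity. So there is nothing in the paper to compare your argument against; judged on its own, your proof is correct and is essentially the argument of the cited source. The representation $\Delta_a^\nu f(a+m-\nu)=\sum_{i=0}^{m}(-1)^i\binom{\nu}{i}f(a+m-i)$ does follow from (\ref{a}) exactly as you describe, and since $-\nu\in(-2,-1)$ gives $\Gamma(-\nu)>0$, positivity of the fractional difference passes to the sum. The coefficient facts all check out: $a_i>0$ for $i\ge 2$, $S_k=(-1)^k\binom{\nu-1}{k-1}>0$ for $k\ge 2$ with $S_2=\nu-1$, and $a_k/S_k=\nu/k$, $a_{k+1}/a_k=(k-\nu)/(k+1)$, $a_{k+2}/a_{k+1}=(k+1-\nu)/(k+2)$, so multiplying the hypothesis at $k=m-1$ by $S_{m-1}>0$ yields precisely $S_{m-1}f(a+3)\ge a_{m-1}f(a+2)+a_m f(a+1)+a_{m+1}f(a)$. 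Grouping the indices $i=m-1,m,m+1$ (whose arguments $f(a+2),f(a+1),f(a)$ are exactly the values not controlled by the inductive monotonicity) together with the tail $S_{m+2}f(a+m)$ is the right way to close the strong induction, and the degenerate case $m=3$ (empty sum, $k=2$) is handled correctly. Your reconstruction also makes transparent why the hypothesis must be imposed for every $k\in\mathbb{N}_2$ rather than for a single value, which the bare statement in the paper leaves unexplained.
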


The  nabla correspondent of Theorem \ref{slov3} will be:
\begin{thm} \label{slov33}
Assume that $f:\mathbb{N}_{a-1}\rightarrow \mathbb{R}$ and $\nabla_{a-1}^\nu f(t)\geq 0$, for each $t \in \mathbb{N}_{a+4}$ with $1<\nu <2$. If $f(a+3)\geq \frac{\nu}{k}f(a+2)+\frac{(k-\nu)\nu}{k(k+1)}f(a+1)+\frac{(k+1-\nu)(k-\nu)}\nu{(k+1)(k+2)k} f(a)$  for each $k \in \mathbb{N}_2$, then $\nabla f(t)\geq 0$ for all $t \in \mathbb{N}_{a+4}$.
\end{thm}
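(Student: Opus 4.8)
The plan is to mimic, almost word for word, the proof of Theorem \ref{Slov11} and of its omitted companion Theorem \ref{slov22}: push the hypothesis from the nabla setting into the delta setting by means of the dual identity in Lemma \ref{left dual}(i), invoke the already proved delta statement Theorem \ref{slov3}, and then pull the conclusion back. No new inequality has to be produced; the argument is entirely a matter of the two bookkeeping translations, so I would keep it to a few lines.

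First I would apply Lemma \ref{left dual}(i) with $\alpha=\nu$ and $n=[\nu]+1=2$, which gives $(\Delta_a^\nu f)(t-\nu)=(\nabla_{a-1}^\nu f)(t)$ for every $t\in\mathbb{N}_{a+2}$. Since $\mathbb{N}_{a+4}\subseteq\mathbb{N}_{a+2}$, the hypothesis $\nabla_{a-1}^\nu f(t)\geq0$ on $\mathbb{N}_{a+4}$ turns into $\Delta_a^\nu f(t-\nu)\geq0$ there, that is $\Delta_a^\nu f(u)\geq0$ for $u$ in the range of arguments demanded by Theorem \ref{slov3}, exactly as in the passage from Theorem \ref{Slov1} to Theorem \ref{Slov11}. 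Next I would observe that the starting inequality assumed on $f(a+3)$ in terms of $f(a+2),f(a+1),f(a)$ and the parameter $k\in\mathbb{N}_2$ is, apart from the evident misprint in the last coefficient, precisely the starting condition of Theorem \ref{slov3}. Hence every hypothesis of Theorem \ref{slov3} is met by $f$ on $\mathbb{N}_a$, and we conclude $\Delta f(t)\geq0$ for all $t\in\mathbb{N}_{a+3}$. Finally, rewriting $\Delta f(t)=f(t+1)-f(t)=\nabla f(t+1)$, the inequality $\Delta f(t)\geq0$ on $\mathbb{N}_{a+3}$ is the same as $\nabla f(s)\geq0$ on $\mathbb{N}_{a+4}$, which is the asserted conclusion; in particular $f$ is nondecreasing on $\mathbb{N}_{a+3}$.

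The step I expect to need care is the index bookkeeping in that first translation: Lemma \ref{left dual}(i) is guaranteed only on $\mathbb{N}_{a+2}$ (because $n=2$), so from $\nabla_{a-1}^\nu f\geq0$ on $\mathbb{N}_{a+4}$ one gets $\Delta_a^\nu f\geq0$ directly only on $\mathbb{N}_{a+4-\nu}$, whereas Theorem \ref{slov3} asks for positivity already on $\mathbb{N}_{a+2-\nu}$. If this shortfall at the two lowest arguments $a+2-\nu$ and $a+3-\nu$ is not absorbed automatically, the remedy is the device used in the proof of Theorem \ref{JEPp}: expand $\nabla_{a-1}^\nu f$ (equivalently $\Delta_a^\nu f$) at those small arguments by the pointwise formulas (\ref{aa}) with $a$ replaced by $a-1$ and (\ref{a}), which reduces the missing positivity to finitely many explicit linear inequalities in $f(a),f(a+1),f(a+2)$ that are then checked against — or appended to — the assumed starting hypotheses. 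Everything else is routine.
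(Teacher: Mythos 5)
Your approach is exactly the one the paper intends: the proof of this theorem is omitted in the paper (as is that of Theorem \ref{slov22}) precisely because it is the same dual-identity transfer used for Theorem \ref{Slov11} --- Lemma \ref{left dual}(i), then the delta result Theorem \ref{slov3}, then the rewriting $\Delta f(t)=\nabla f(t+1)$.

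However, the index shortfall you flag at the end is a genuine gap, not one that is ``absorbed automatically,'' and your proposed remedy does not close it without changing the hypotheses. With $\nabla_{a-1}^\nu f(t)\geq 0$ assumed only on $\mathbb{N}_{a+4}$, the dual identity yields $\Delta_a^\nu f(u)\geq 0$ only for $u\in\mathbb{N}_{a+4-\nu}$, whereas Theorem \ref{slov3} as stated requires positivity on all of $\mathbb{N}_{a+2-\nu}$. The two missing values are $\Delta_a^\nu f(a+2-\nu)=\nabla_{a-1}^\nu f(a+2)$ and $\Delta_a^\nu f(a+3-\nu)=\nabla_{a-1}^\nu f(a+3)$, which by the representation (\ref{aa}) (with $a$ replaced by $a-1$) are explicit linear combinations of $f(a),\dots,f(a+3)$ with coefficients of both signs; their nonnegativity does not follow from the single starting inequality imposed on $f(a+3)$, which carries no separate sign information about $f(a)$, $f(a+1)$, $f(a+2)$. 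So your fallback of ``appending'' conditions is really what is required, and the cleanest repair --- which also repairs the paper's own statement --- is to assume $\nabla_{a-1}^\nu f(t)\geq 0$ for all $t\in\mathbb{N}_{a+2}$ (as in Theorem \ref{Slov11}); with that hypothesis your three-line argument closes with no further work. The same remark applies to Theorem \ref{slov22}, where the shortfall is the single point $t=a+2$.
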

In \cite{Uyanik}, the following monotonicity result was proved for the delta fractional difference operator, with order $0<\alpha<1$:

\begin{thm} \cite{Uyanik} \label{U1}
Let $f:\mathbb{N}_0\rightarrow \mathbb{R}$ be a function satisfying $y(0)\geq 0$. Fix $\nu \in (0,1)$ and suppose that
\begin{equation}\label{U11}
  \Delta_0^\nu y(t)\geq 0,~~~\texttt{for each}~t \in \mathbb{N}_{1-\nu}.
\end{equation}
Then, $y$ is $\nu-$increasing on $\mathbb{N}_0$.
\end{thm}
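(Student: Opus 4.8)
The plan is to argue directly from the alternative (Leibniz-type) representation (\ref{a}) of the delta Riemann difference. Since $0<\nu<1$ we have $n=[\nu]+1=1$, so (\ref{a}) reads $\Delta_0^\nu y(t)=\frac{1}{\Gamma(-\nu)}\sum_{s=0}^{t+\nu}(t-\sigma(s))^{(-\nu-1)}y(s)$ for $t\in\mathbb{N}_{1-\nu}$. The first step is to evaluate this at $t=k+1-\nu$ for an arbitrary $k\in\mathbb{N}_0$; then $t+\nu=k+1$, and since $\sigma(s)=s+1$ the sum collapses to the finite sum $\sum_{s=0}^{k+1}(k-s-\nu)^{(-\nu-1)}y(s)$, reducing the whole claim to a statement about one real number for each $k$.

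The second step is to peel off the two top-index terms. Using the falling-factorial identity $t^{(-\nu-1)}=\Gamma(t+1)/\Gamma(t+\nu+2)$ from Lemma \ref{pfp} together with $\Gamma(1-\nu)=-\nu\,\Gamma(-\nu)$, the $s=k+1$ coefficient is $(-1-\nu)^{(-\nu-1)}=\Gamma(-\nu)$ and the $s=k$ coefficient is $(-\nu)^{(-\nu-1)}=\Gamma(1-\nu)=-\nu\,\Gamma(-\nu)$. Hence
\begin{equation*}
\Gamma(-\nu)\,\Delta_0^\nu y(k+1-\nu)=\Gamma(-\nu)\bigl(y(k+1)-\nu y(k)\bigr)+\sum_{s=0}^{k-1}(k-s-\nu)^{(-\nu-1)}y(s).
\end{equation*}
Two sign facts are needed: $\Gamma(-\nu)<0$ for $\nu\in(0,1)$, and $(k-s-\nu)^{(-\nu-1)}=\Gamma(k-s+1-\nu)/\Gamma(k-s+2)>0$ whenever $0\le s\le k-1$ (then $k-s+1-\nu>1$, so both gammas are positive).

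The third step is an induction on $k$ that also carries the auxiliary statement $y(k)\ge0$. For $k=0$ the displayed identity reduces to $\Gamma(-\nu)\,\Delta_0^\nu y(1-\nu)=\Gamma(-\nu)\bigl(y(1)-\nu y(0)\bigr)$, and $\Delta_0^\nu y(1-\nu)\ge0$ with $\Gamma(-\nu)<0$ forces $y(1)-\nu y(0)\ge0$; with $y(0)\ge0$ this also gives $y(1)\ge0$. For the inductive step, suppose $y(s)\ge0$ for all $0\le s\le k$. Then the tail sum is $\ge0$, while the left-hand side is $\le0$ because $\Delta_0^\nu y(k+1-\nu)\ge0$ and $\Gamma(-\nu)<0$; therefore $\Gamma(-\nu)\bigl(y(k+1)-\nu y(k)\bigr)\le0$, and dividing by the negative quantity $\Gamma(-\nu)$ gives $y(k+1)\ge\nu y(k)$, whence also $y(k+1)\ge0$. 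This closes the induction and shows $y(k+1)\ge\nu y(k)$ for every $k\in\mathbb{N}_0$, i.e.\ $y$ is $\nu$-increasing.

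The only genuinely delicate point is that the hypothesis $\Delta_0^\nu y\ge0$ by itself gives no control over the sign of the tail sum $\sum_{s=0}^{k-1}(k-s-\nu)^{(-\nu-1)}y(s)$; this is why the nonnegativity of $y$ must be propagated inductively from the seed $y(0)\ge0$, and it is the reason the weaker notion of $\nu$-increasing (rather than plain monotonicity) is the natural conclusion. Everything else is routine: evaluating the two boundary falling-factorial coefficients via Lemma \ref{pfp} and recording that $\Gamma$ is negative on $(-1,0)$.
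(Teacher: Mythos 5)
Your proof is correct. Note that the paper itself offers no proof of Theorem \ref{U1}: it is quoted from \cite{Uyanik} and used as a black box to derive the nabla version (Theorem \ref{UU1}) via the dual identity, so there is no internal argument to compare against; your derivation is essentially the one in the cited source. The key computations all check out: with $n=1$ the representation (\ref{a}) at $t=k+1-\nu$ gives $\Gamma(-\nu)\,\Delta_0^\nu y(k+1-\nu)=\sum_{s=0}^{k+1}(k-s-\nu)^{(-\nu-1)}y(s)$, the boundary coefficients are $(-1-\nu)^{(-\nu-1)}=\Gamma(-\nu)$ and $(-\nu)^{(-\nu-1)}=\Gamma(1-\nu)=-\nu\Gamma(-\nu)$, the remaining coefficients $\Gamma(k-s+1-\nu)/\Gamma(k-s+2)$ are positive for $0\le s\le k-1$, and $\Gamma(-\nu)<0$; the induction that simultaneously propagates $y(k)\ge 0$ and $y(k+1)\ge\nu y(k)$ from the seed $y(0)\ge 0$ is exactly the right way to control the sign of the tail sum.
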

By means of the dual identity Lemma \ref{left dual}(i) we can have the following nabla version of Theorem \ref{U1} abo
\begin{thm}  \label{UU1}
Let $f:\mathbb{N}_{0}\rightarrow \mathbb{R}$ be a function . Fix $\nu \in (0,1)$ and suppose that
\begin{equation}\label{UU11}
  \nabla_{-1}^\nu y(t)\geq 0,~~~\texttt{for each}~t \in \mathbb{N}_{0}.
\end{equation}
Then, $y$ is $\nu-$increasing on $\mathbb{N}_0$.
\end{thm}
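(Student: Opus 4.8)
The plan is to mimic exactly the argument used for Theorem \ref{JEPp} and Theorem \ref{Slov11}, namely to transport the hypothesis on $\nabla_{-1}^\nu y$ to a hypothesis on $\Delta_0^\nu y$ via the dual identity Lemma \ref{left dual}(i), apply Theorem \ref{U1}, and then translate the conclusion back. First I would invoke Lemma \ref{left dual}(i) with $a=0$: since $0\leq n-1<\nu\leq n$ with $n=1$ here (because $0<\nu<1$), the lemma gives $(\Delta_0^\nu y)(t-\nu)=\nabla_{-1}^\nu y(t)$ for $t\in\mathbb{N}_{1}$. Wait — for $0<\nu<1$ we have $n=1$, so the lemma's range $t\in\mathbb{N}_{n+a}=\mathbb{N}_{1}$; but our hypothesis (\ref{UU11}) is stated for $t\in\mathbb{N}_0$, which is strictly larger. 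I would address this by checking the extra endpoint $t=0$ separately: using the Leibniz-type representation (\ref{aa}) for $\nabla_{-1}^\nu$, the value $\nabla_{-1}^\nu y(0)$ reduces to a single term proportional to $y(0)$ (analogous to (\ref{j1})), so the hypothesis at $t=0$ just says $y(0)\geq 0$, recovering the standing sign condition needed to apply Theorem \ref{U1}. For $t\in\mathbb{N}_1$ the dual identity then yields $\Delta_0^\nu y(u)\geq 0$ for all $u\in\mathbb{N}_{1-\nu}$, which is precisely hypothesis (\ref{U11}) of Theorem \ref{U1}.

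Having verified both $y(0)\geq 0$ and $\Delta_0^\nu y(t)\geq 0$ on $\mathbb{N}_{1-\nu}$, I would apply Theorem \ref{U1} directly to conclude that $y$ is $\nu$-increasing on $\mathbb{N}_0$, i.e. $y(a+1)\geq \nu y(a)$ for every $a\in\mathbb{N}_0$. This is the statement to be proved, so no further back-translation of the conclusion is needed — unlike the first-derivative theorems (Theorem \ref{JEPp} etc.), where one had to rewrite $\Delta f(t)=\nabla f(t+1)$, here the $\nu$-increasing property is already phrased symmetrically and carries over verbatim.

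The main obstacle, and the only real subtlety, is the bookkeeping of domains around the endpoint $t=0$: one must confirm that Lemma \ref{left dual}(i) is genuinely applicable at the lower end and that the single "missing" point $t=0$ of the hypothesis is exactly the ingredient $y(0)\geq 0$ required by Theorem \ref{U1}. I would make this explicit by computing $\nabla_{-1}^\nu y(0)$ from (\ref{aa}): the sum $\sum_{s=0}^{0}(0-\rho(s))^{\overline{-\nu-1}}y(s)/\Gamma(-\nu)$ collapses to $\tfrac{1^{\overline{-\nu-1}}}{\Gamma(-\nu)}y(0)=y(0)$, since $1^{\overline{-\nu-1}}=\Gamma(-\nu)/\Gamma(1)=\Gamma(-\nu)$. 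Hence (\ref{UU11}) at $t=0$ reads $y(0)\geq 0$, and the remaining instances $t\in\mathbb{N}_1$ feed Theorem \ref{U1} through the dual identity as above. Everything else is routine substitution, so the proof is as short as the preceding ones.
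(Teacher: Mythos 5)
Your proposal is correct and follows essentially the same route as the paper: the paper's proof likewise notes that $\nabla_{-1}^\nu y(0)=y(0)\geq 0$ (supplying the sign condition required by Theorem \ref{U1}), transfers the hypothesis for $t\in\mathbb{N}_1$ to $\Delta_0^\nu y(u)\geq 0$ on $\mathbb{N}_{1-\nu}$ via Lemma \ref{left dual}(i), and then applies Theorem \ref{U1}. Your explicit computation of the collapsed sum at $t=0$ just spells out a step the paper asserts without detail.
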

\begin{proof}
From assumption, $\nabla_{-1}^\nu f(0)=f(0)\geq 0$. On the other hand, $\Delta_0^\nu f(t-\nu)=\nabla_{-1}^\nu f(t)\geq 0$ for $t \in \mathbb{N}_1$. That is $\Delta_0^\nu f(u)\geq 0$ for $t \in \mathbb{N}_{1-\nu}$,  Thus, by Theorem \ref{U1} we conclude that $f$ is $\nu-$increasing on $\mathbb{N}_0$.
\end{proof}
On the other way back, we can similarly use the dual identity and delta version Theorem 3.6 in \cite{Uyanik} to prove the following nabla theorem:

\begin{thm}  \label{UU2}
Let $f:\mathbb{N}_{0}\rightarrow \mathbb{R}$ be a function . Fix $\nu \in (0,1)$ and suppose that $f$ is increasing on $\mathbb{N}_0$ and $y(0)\geq 0$. Then
\begin{equation}\label{UU22}
  \nabla_{-1}^\nu y(t)\geq 0,~~~\texttt{for each}~t \in \mathbb{N}_{0}.
\end{equation}

\end{thm}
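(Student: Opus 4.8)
The plan is to reverse the argument used for Theorem \ref{UU1}: instead of pushing a nabla hypothesis through the dual identity into the delta world, we start from the monotonicity of $f$, apply the delta companion of Theorem \ref{U1} (Theorem 3.6 in \cite{Uyanik}), and transport its conclusion back to the nabla operator via Lemma \ref{left dual}(i). Recall that Theorem 3.6 in \cite{Uyanik} asserts that if $g:\mathbb{N}_0\to\mathbb{R}$ is increasing on $\mathbb{N}_0$ with $g(0)\geq 0$, then $\Delta_0^\nu g(t)\geq 0$ for every $t\in\mathbb{N}_{1-\nu}$; this is precisely the delta result we are entitled to invoke here.

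First I would dispose of the base point $t=0$, since Lemma \ref{left dual}(i) only yields the identity $\nabla_{-1}^\nu y(t)=\Delta_0^\nu y(t-\nu)$ for $t\in\mathbb{N}_{n+a}=\mathbb{N}_{1}$ (with $a=0$ and $n=[\nu]+1=1$). Evaluating the single-term sum in the representation (\ref{aa}), with $a$ replaced by $-1$, gives $\nabla_{-1}^\nu y(0)=\frac{1}{\Gamma(-\nu)}\,1^{\overline{-\nu-1}}\,y(0)=y(0)\geq 0$, exactly as in the proof of Theorem \ref{UU1}. Next, for $t\in\mathbb{N}_{1}$ I would apply Lemma \ref{left dual}(i) to write $\nabla_{-1}^\nu y(t)=\Delta_0^\nu y(t-\nu)$, noting that $t-\nu$ ranges over $\mathbb{N}_{1-\nu}$ as $t$ ranges over $\mathbb{N}_{1}$. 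Since $f$ is increasing on $\mathbb{N}_0$ and $f(0)=y(0)\geq 0$, Theorem 3.6 in \cite{Uyanik} gives $\Delta_0^\nu y(u)\geq 0$ for all $u\in\mathbb{N}_{1-\nu}$, hence $\nabla_{-1}^\nu y(t)\geq 0$ for all $t\in\mathbb{N}_{1}$. Combining this with the $t=0$ case yields $\nabla_{-1}^\nu y(t)\geq 0$ for all $t\in\mathbb{N}_0$, as claimed.

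There is essentially no serious obstacle in this argument; its whole content is that the dual identity together with the already-established delta monotonicity theorem do all the work. The only point demanding a little care is the endpoint $t=0$, which is not covered by Lemma \ref{left dual}(i) and must be verified by the direct one-term evaluation of the nabla fractional difference; one should also check that the notion of ``increasing'' appearing in the hypothesis matches the hypothesis of Theorem 3.6 in \cite{Uyanik}, which, as in Theorem \ref{U1}, it does.
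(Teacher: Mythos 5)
Your proof is correct and follows exactly the route the paper intends: the paper states Theorem \ref{UU2} without a written proof, merely remarking that one should ``use the dual identity and delta version Theorem 3.6 in \cite{Uyanik},'' which is precisely what you do via Lemma \ref{left dual}(i) and Theorem \ref{U3}. Your explicit handling of the endpoint $t=0$ by the one-term evaluation $\nabla_{-1}^\nu y(0)=y(0)\geq 0$ (not covered by the dual identity) is a necessary detail the paper leaves implicit, and it matches the analogous computation in the proofs of Theorems \ref{JEPp} and \ref{UU1}.
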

\section{Monotonicity results  for Caputo fractional diferences }

\begin{thm}\label{C1delta}
If $f:\mathbb{N}_a\rightarrow \mathbb{R}$ is a function, $~^{C}\Delta_a^\nu f(t)\geq -\frac{(t-a)^{(-\nu)}}{\Gamma(1-\nu)}f(a)
-\frac{(t-a)^{(1-\nu)}}{\Gamma(2-\nu)}\Delta f(a)$ for $t \in \mathbb{N}_{a+2- \nu}$ with $1<\nu <2$ and $f(a+1)\geq f(a)\geq 0$. Then, $\Delta f(t)\geq 0$ for $t \in \mathbb{N}_a$.
\end{thm}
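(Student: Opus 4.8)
The plan is to reduce this Caputo statement to the already-known Riemann-type result, Theorem \ref{JEP1}, via the Riemann--Caputo relation of Theorem \ref{relate}. Since $1<\nu<2$ we have $n=[\nu]+1=2$, so for $t\in\mathbb{N}_{a+2-\nu}$ identity (\ref{relate1}) reads
\begin{equation*}
~^{C}\Delta_a^\nu f(t)=\Delta_a^\nu f(t)-\frac{(t-a)^{(-\nu)}}{\Gamma(1-\nu)}f(a)-\frac{(t-a)^{(1-\nu)}}{\Gamma(2-\nu)}\Delta f(a).
\end{equation*}
Rearranging this equality shows that the hypothesis
$~^{C}\Delta_a^\nu f(t)\geq -\frac{(t-a)^{(-\nu)}}{\Gamma(1-\nu)}f(a)-\frac{(t-a)^{(1-\nu)}}{\Gamma(2-\nu)}\Delta f(a)$
on $\mathbb{N}_{a+2-\nu}$ is precisely equivalent to $\Delta_a^\nu f(t)\geq 0$ on $\mathbb{N}_{a+2-\nu}$.

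With that observation in hand, the remaining assumptions $f(a+1)\geq f(a)\geq 0$ and $1<\nu<2$ are exactly the hypotheses of Theorem \ref{JEP1}, whose conclusion yields $\Delta f(t)\geq 0$ for all $t\in\mathbb{N}_a$, i.e.\ $f$ is nondecreasing on $\mathbb{N}_a$. So the whole argument is: apply (\ref{relate1}) with $n=2$ to convert the Caputo inequality into the Riemann inequality $\Delta_a^\nu f\geq 0$, then invoke Theorem \ref{JEP1}.

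There is essentially no obstacle; the only thing to check is bookkeeping, namely that the set $\mathbb{N}_{a+2-\nu}$ on which the Caputo hypothesis is posed is the same set on which Theorem \ref{relate} holds and on which Theorem \ref{JEP1} demands $\Delta_a^\nu f\geq 0$, and that the factorial terms $(t-a)^{(-\nu)}$, $(t-a)^{(1-\nu)}$ in the correction sum are well defined there (the conventions on division at a pole in (\ref{fpg}) handle any degenerate cases). All of this is immediate from the preliminaries. It is worth remarking that the identical strategy, using (\ref{relate2}) together with the $Q$-operator identity (\ref{cpr}) (or the right-sided analogue of Theorem \ref{JEP1}), will produce the corresponding monotonicity statement for the right Caputo difference $~^{C}_{b}\Delta^\nu$.
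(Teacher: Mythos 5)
Your proposal is correct and follows exactly the paper's own (one-line) argument: apply the Riemann--Caputo relation (\ref{relate1}) with $n=2$ to convert the hypothesis into $\Delta_a^\nu f(t)\geq 0$ on $\mathbb{N}_{a+2-\nu}$, then invoke Theorem \ref{JEP1}. Nothing further is needed.
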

The proof follows by (\ref{relate1}) with $n=2$ and Theorem \ref{JEP1}.

The following is the Caputo version of Theorem \ref{Slov1}.
\begin{thm} \label{C2delta}
Assume that $f:\mathbb{N}_a\rightarrow \mathbb{R}$ and $~^{C}\Delta_a^\nu f(t)\geq -\frac{(t-a)^{(-\nu)}}{\Gamma(1-\nu)}f(a)
-\frac{(t-a)^{(1-\nu)}}{\Gamma(2-\nu)}\Delta f(a)$, for each $t \in \mathbb{N}_{a+2-\nu}$ with $1<\nu <2$. If $f(a+1)\geq \frac{\nu}{k+1}f(a)$  for each $k \in \mathbb{N}_0$, then $\Delta f(t)\geq 0$ for all $t \in \mathbb{N}_{a+1}$.
\end{thm}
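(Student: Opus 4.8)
The plan is to reduce the statement directly to Theorem \ref{Slov1} through the Riemann--Caputo relation of Theorem \ref{relate}. Since $1<\nu<2$ we have $n=[\nu]+1=2$, so identity (\ref{relate1}) specializes to
$$~^{C}\Delta_a^\nu f(t)=\Delta_a^\nu f(t)-\frac{(t-a)^{(-\nu)}}{\Gamma(1-\nu)}f(a)-\frac{(t-a)^{(1-\nu)}}{\Gamma(2-\nu)}\Delta f(a),$$
valid for each $t\in\mathbb{N}_{a+2-\nu}$. First I would rearrange this to isolate $\Delta_a^\nu f(t)$ on the left, and then observe that the hypothesis $~^{C}\Delta_a^\nu f(t)\geq -\frac{(t-a)^{(-\nu)}}{\Gamma(1-\nu)}f(a)-\frac{(t-a)^{(1-\nu)}}{\Gamma(2-\nu)}\Delta f(a)$ is, after moving the two boundary terms across, exactly the assertion that $\Delta_a^\nu f(t)\geq 0$ for every $t\in\mathbb{N}_{a+2-\nu}$.

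With the Riemann fractional difference now known to be nonnegative, the remaining hypothesis $f(a+1)\geq\frac{\nu}{k+1}f(a)$ for each $k\in\mathbb{N}_0$ is precisely the starting condition required by Theorem \ref{Slov1}. Applying that theorem yields $\Delta f(t)\geq 0$ for all $t\in\mathbb{N}_{a+1}$, which is the desired conclusion. The argument is therefore a one-line invocation of Theorem \ref{Slov1} once the translation via (\ref{relate1}) has been recorded, entirely parallel to the way Theorem \ref{C1delta} is deduced from (\ref{relate1}) and Theorem \ref{JEP1}.

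The only point demanding any care is the domain bookkeeping: both $~^{C}\Delta_a^\nu f$ and $\Delta_a^\nu f$ are defined on $\mathbb{N}_{a+(n-\nu)}=\mathbb{N}_{a+2-\nu}$, and (\ref{relate1}) holds on that entire set, so no index shift is introduced and the hypotheses of Theorem \ref{Slov1} are matched verbatim. I do not expect a genuine obstacle here; the substance of the result is carried by Theorems \ref{relate} and \ref{Slov1}, and the present statement is essentially a corollary of the two.
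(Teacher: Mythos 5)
Your proposal is correct and is exactly the paper's argument: the paper proves this theorem by the single remark that it ``follows by (\ref{relate1}) with $n=2$ and Theorem \ref{Slov1},'' which is precisely the reduction you carry out (the Caputo hypothesis is, by (\ref{relate1}), equivalent to $\Delta_a^\nu f(t)\geq 0$ on $\mathbb{N}_{a+2-\nu}$, after which Theorem \ref{Slov1} applies verbatim). Your additional domain bookkeeping is a welcome explicit check that the paper leaves implicit.
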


The proof follows by (\ref{relate1}) with $n=2$ and Theorem \ref{Slov1}.

\begin{thm} \label{C3delta}
Assume that $f:\mathbb{N}_a\rightarrow \mathbb{R}$ and $~^{C}\Delta_a^\nu f(t)\geq -\frac{(t-a)^{(-\nu)}}{\Gamma(1-\nu)}f(a)
-\frac{(t-a)^{(1-\nu)}}{\Gamma(2-\nu)}\Delta f(a)$ for each $t \in \mathbb{N}_{a+2-\nu}$ with $1<\nu <2$. If $f(a+2)\geq \frac{\nu}{k+2}f(a+1)+\frac{(k+1-\nu)\nu} {(k+2)(k+3)}f(a)$  for each $k \in \mathbb{N}_1$, then $\Delta f(t)\geq 0$ for all $t \in \mathbb{N}_{a+2}$.
\end{thm}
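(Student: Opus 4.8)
The plan is to reduce Theorem~\ref{C3delta} directly to its Riemann counterpart, Theorem~\ref{slov2}, by means of the bridge identity~(\ref{relate1}) between Caputo and Riemann delta fractional differences. Since $1<\nu<2$ we have $n=[\nu]+1=2$, so~(\ref{relate1}) specializes to
$$~^{C}\Delta_a^\nu f(t)=\Delta_a^\nu f(t)-\frac{(t-a)^{(-\nu)}}{\Gamma(1-\nu)}f(a)-\frac{(t-a)^{(1-\nu)}}{\Gamma(2-\nu)}\Delta f(a),\qquad t\in\mathbb{N}_{a+2-\nu}.$$
The first step is to notice that, after transposing the two boundary terms, the standing hypothesis
$$~^{C}\Delta_a^\nu f(t)\geq -\frac{(t-a)^{(-\nu)}}{\Gamma(1-\nu)}f(a)-\frac{(t-a)^{(1-\nu)}}{\Gamma(2-\nu)}\Delta f(a)$$
is exactly equivalent to the inequality $\Delta_a^\nu f(t)\geq 0$ for each $t\in\mathbb{N}_{a+2-\nu}$.

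The second step is simply to apply Theorem~\ref{slov2}. Its hypotheses are now all in force: $f:\mathbb{N}_a\rightarrow\mathbb{R}$, $\Delta_a^\nu f(t)\geq 0$ on $\mathbb{N}_{a+2-\nu}$ with $1<\nu<2$, and the lower bound $f(a+2)\geq \frac{\nu}{k+2}f(a+1)+\frac{(k+1-\nu)\nu}{(k+2)(k+3)}f(a)$ for every $k\in\mathbb{N}_1$, which is assumed verbatim in Theorem~\ref{C3delta}. Theorem~\ref{slov2} then delivers $\Delta f(t)\geq 0$ for all $t\in\mathbb{N}_{a+2}$, which is precisely the conclusion sought.

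There is essentially no obstacle here beyond the bookkeeping. One must only verify that the finite sum $\sum_{k=0}^{n-1}\frac{(t-a)^{(k-\nu)}}{\Gamma(k-\nu+1)}\Delta^k f(a)$ at $n=2$ expands to exactly the two terms appearing on the right-hand side of the hypothesis, and that the domain $\mathbb{N}_{a+2-\nu}$ agrees in both formulations; both are immediate. Thus the proof follows at once from~(\ref{relate1}) with $n=2$ and Theorem~\ref{slov2}, exactly in the spirit of the short arguments given for Theorems~\ref{C1delta} and~\ref{C2delta}.
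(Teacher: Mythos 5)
Your proof is correct and follows exactly the paper's own argument: apply the Caputo--Riemann relation (\ref{relate1}) with $n=2$ to convert the hypothesis into $\Delta_a^\nu f(t)\geq 0$ on $\mathbb{N}_{a+2-\nu}$, then invoke Theorem \ref{slov2}. The paper states this reduction in one line, and your expansion of it is accurate.
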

The proof follows by (\ref{relate1}) with $n=2$ and Theorem \ref{slov2}.

\begin{thm} \label{C4delta}
Assume that $f:\mathbb{N}_a\rightarrow \mathbb{R}$ and $~^{C}\Delta_a^\nu f(t)\geq -\frac{(t-a)^{(-\nu)}}{\Gamma(1-\nu)}f(a)
-\frac{(t-a)^{(1-\nu)}}{\Gamma(2-\nu)}\Delta f(a)$, for each $t \in \mathbb{N}_{a+2-\nu}$ with $1<\nu <2$. If $f(a+3)\geq \frac{\nu}{k}f(a+2)+\frac{(k-\nu)\nu}{(k)(k+1)}f(a+1)+\frac{(k+1-\nu)(k-\nu)\nu} {(k+1)(k+2)k}f(a)$  for each $k \in \mathbb{N}_2$, then $\Delta f(t)\geq 0$ for all $t \in \mathbb{N}_{a+3}$.
\end{thm}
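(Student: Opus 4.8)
The plan is to mimic exactly the pattern used in Theorems \ref{C1delta}, \ref{C2delta} and \ref{C3delta}: reduce the Caputo statement to the corresponding Riemann statement, namely Theorem \ref{slov3}, via the relation (\ref{relate1}) specialized to $n=2$. Since $1<\nu<2$ we have $n=[\nu]+1=2$, so (\ref{relate1}) reads
\begin{equation}\label{caputo-riemann-n2}
~^{C}\Delta_a^\nu f(t)=\Delta_a^\nu f(t)-\frac{(t-a)^{(-\nu)}}{\Gamma(1-\nu)}f(a)-\frac{(t-a)^{(1-\nu)}}{\Gamma(2-\nu)}\Delta f(a).
\end{equation}
Rearranging, the Riemann difference is recovered as
\begin{equation}\label{riemann-from-caputo}
\Delta_a^\nu f(t)=~^{C}\Delta_a^\nu f(t)+\frac{(t-a)^{(-\nu)}}{\Gamma(1-\nu)}f(a)+\frac{(t-a)^{(1-\nu)}}{\Gamma(2-\nu)}\Delta f(a).
\end{equation}

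First I would observe that the hypothesis of the theorem is precisely
$$~^{C}\Delta_a^\nu f(t)\geq -\frac{(t-a)^{(-\nu)}}{\Gamma(1-\nu)}f(a)-\frac{(t-a)^{(1-\nu)}}{\Gamma(2-\nu)}\Delta f(a),\qquad t\in\mathbb{N}_{a+2-\nu},$$
which, substituted into (\ref{riemann-from-caputo}), yields immediately $\Delta_a^\nu f(t)\geq 0$ for each $t\in\mathbb{N}_{a+2-\nu}$. Thus the Riemann fractional difference is nonnegative on the required set. Then I would invoke Theorem \ref{slov3}: its remaining hypothesis is the starting condition $f(a+3)\geq \frac{\nu}{k}f(a+2)+\frac{(k-\nu)\nu}{k(k+1)}f(a+1)+\frac{(k+1-\nu)(k-\nu)\nu}{(k+1)(k+2)k}f(a)$ for each $k\in\mathbb{N}_2$, which is carried over verbatim in the statement of the present theorem. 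Therefore all hypotheses of Theorem \ref{slov3} hold, and its conclusion gives $\Delta f(t)\geq 0$ for all $t\in\mathbb{N}_{a+3}$, which is exactly the assertion.

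There is essentially no obstacle here: the whole argument is the one-line substitution (\ref{riemann-from-caputo}) followed by citing Theorem \ref{slov3}, exactly as the proofs of the three preceding Caputo theorems cite Theorems \ref{JEP1}, \ref{Slov1} and \ref{slov2} respectively. The only point requiring a word of care is the domain bookkeeping --- checking that $n=2$ is the correct value for $1<\nu<2$ so that (\ref{relate1}) has exactly two boundary terms matching those appearing in the hypothesis, and that both the Caputo hypothesis and the Riemann conclusion are stated on the same index set $\mathbb{N}_{a+2-\nu}$ (resp. $\mathbb{N}_{a+3}$) as in Theorem \ref{slov3}. Accordingly I would simply write: ``The proof follows by (\ref{relate1}) with $n=2$ and Theorem \ref{slov3}.''
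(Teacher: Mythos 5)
Your proposal is correct and is exactly the paper's argument: the paper's entire proof is the single sentence ``The proof follows by (\ref{relate1}) with $n=2$ and Theorem \ref{slov3}.'' Your expanded version simply spells out the substitution that turns the Caputo hypothesis into $\Delta_a^\nu f(t)\geq 0$ and then cites Theorem \ref{slov3}, which is precisely what the paper intends.
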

The proof follows by (\ref{relate1}) with $n=2$ and Theorem \ref{slov3}.

\begin{thm}  \label{C5delta}
Let $f:\mathbb{N}_0\rightarrow \mathbb{R}$ be a function satisfying $f(0)\geq 0$. Fix $\nu \in (0,1)$ and suppose that
\begin{equation}\label{C11}
  ~^{C}\Delta_0^\nu f(t)\geq -\frac{t^{(-\nu)}}{\Gamma(1-\nu)}f(0),~~~\texttt{for each}~t \in \mathbb{N}_{1-\nu}.
\end{equation}
Then, $f$ is $\nu-$increasing on $\mathbb{N}_0$.
\end{thm}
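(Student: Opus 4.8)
The plan is to reduce the Caputo hypothesis to the corresponding Riemann hypothesis and then quote Theorem~\ref{U1}. Since $0<\nu<1$ we have $n=[\nu]+1=1$, so the special case (\ref{relate22}) of Theorem~\ref{relate} applies with $a=0$ and $\alpha=\nu$, giving
\begin{equation*}
~^{C}\Delta_0^\nu f(t)=\Delta_0^\nu f(t)-\frac{t^{(-\nu)}}{\Gamma(1-\nu)}f(0),\qquad t\in\mathbb{N}_{1-\nu}.
\end{equation*}

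First I would substitute this identity into the standing assumption (\ref{C11}). The correction term $-\frac{t^{(-\nu)}}{\Gamma(1-\nu)}f(0)$ then appears on both sides and cancels, so (\ref{C11}) is equivalent to
\begin{equation*}
\Delta_0^\nu f(t)\geq 0\qquad\texttt{for each}~t\in\mathbb{N}_{1-\nu}.
\end{equation*}

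Next, since $f(0)\geq 0$ by hypothesis, the function $f$ meets exactly the assumptions of Theorem~\ref{U1} (with $y=f$), so that theorem applies directly and yields that $f$ is $\nu$-increasing on $\mathbb{N}_0$, which is the desired conclusion.

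The argument is therefore immediate once the Caputo--Riemann correction term is recognized as the right-hand side of (\ref{C11}); that recognition, supplied verbatim by (\ref{relate22}), is the only step requiring care, and there is no genuine analytic obstacle. I note that the same scheme, using (\ref{relate1}) with $n=2$ in place of (\ref{relate22}), is precisely what underlies Theorems~\ref{C1delta}--\ref{C4delta}.
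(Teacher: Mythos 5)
Your proposal is correct and is exactly the paper's argument: the paper's one-line proof reads ``The proof follows by (\ref{relate1}) with $n=1$ and Theorem \ref{U1},'' which is precisely your substitution of the Caputo--Riemann relation (\ref{relate22}) into (\ref{C11}) to cancel the correction term and reduce to $\Delta_0^\nu f(t)\geq 0$. Nothing further is needed.
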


The proof follows by (\ref{relate1}) with $n=1$ and Theorem \ref{U1}.

\indent

The following is Theorem 3.6 in \cite{Uyanik}.
\begin{thm}  \label{U3}\cite{Uyanik}
Let $f:\mathbb{N}_{0}\rightarrow \mathbb{R}$ be a function . Fix $\nu \in (0,1)$ and suppose that $f$ is increasing on $\mathbb{N}_0$ and $f(0)\geq 0$. Then
\begin{equation}\label{U33}
  \Delta_{0}^\nu f(t)\geq 0,~~~\texttt{for each}~t \in \mathbb{N}_{1-\nu}.
\end{equation}

\end{thm}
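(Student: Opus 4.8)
The plan is to avoid the original $\Delta^{n}$-based definition and work instead from the Leibniz-rule representation (\ref{a}). I would fix $\nu\in(0,1)$, take $a=0$, $n=1$; for $t\in\mathbb N_{1-\nu}$ the number $m:=t+\nu$ is a positive integer, and (\ref{a}) gives
\[
\Delta_0^\nu f(t)=\frac{1}{\Gamma(-\nu)}\sum_{s=0}^{m}\bigl(t-\sigma(s)\bigr)^{(-\nu-1)}f(s),
\]
where $\Gamma(-\nu)<0$ since $-\nu\in(-1,0)$. Abbreviating $c_s:=\bigl(t-\sigma(s)\bigr)^{(-\nu-1)}=\dfrac{\Gamma(m-s-\nu)}{\Gamma(m-s+1)}$ (by (\ref{fpg})), one notes that $c_m=\Gamma(-\nu)<0$, whereas for $0\le s\le m-1$ every Gamma-argument is positive, so $c_s>0$. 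Because $\Gamma(-\nu)<0$, proving $\Delta_0^\nu f(t)\ge 0$ is equivalent to proving $\sum_{s=0}^{m}c_s f(s)\le 0$.

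First I would perform an Abel summation on that sum. Setting $C_s:=\sum_{j=s}^{m}c_j$ for $0\le s\le m$ and $C_{m+1}:=0$, summation by parts yields
\[
\sum_{s=0}^{m}c_s f(s)=C_0\,f(0)+\sum_{s=1}^{m}C_s\,\Delta f(s-1).
\]
Now $f(0)\ge 0$, and since $f$ is increasing on $\mathbb N_0$ we have $\Delta f(s-1)=f(s)-f(s-1)\ge 0$ for all $s\ge 1$. Hence the right-hand side is $\le 0$ as soon as $C_s\le 0$ for every $s\in\{0,1,\dots,m\}$, which is the only remaining point to settle.

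The tail sums telescope: from Lemma \ref{pfp}(i) one gets $\bigl(t-\sigma(j)\bigr)^{(-\nu-1)}=\tfrac1\nu\,\Delta_j\bigl[(t-j)^{(-\nu)}\bigr]$, so
\[
C_s=\frac1\nu\sum_{j=s}^{m}\Delta_j\bigl[(t-j)^{(-\nu)}\bigr]=\frac1\nu\Bigl[(t-m-1)^{(-\nu)}-(t-s)^{(-\nu)}\Bigr]=-\frac1\nu\,(t-s)^{(-\nu)},
\]
where $(t-m-1)^{(-\nu)}=(-\nu-1)^{(-\nu)}=\Gamma(-\nu)/\Gamma(0)=0$ by the division-at-a-pole convention. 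Since $t-s=m-s-\nu\ge-\nu>-1$, we have $(t-s)^{(-\nu)}=\Gamma(m-s-\nu+1)/\Gamma(m-s+1)>0$, and therefore $C_s<0$ for all $s$. (The same closed form can alternatively be obtained by a one-line induction on $m-s$.) Substituting back, $\sum_{s=0}^{m}c_s f(s)=C_0 f(0)+\sum_{s\ge 1}C_s\,\Delta f(s-1)\le 0$, and dividing by the negative number $\Gamma(-\nu)$ gives $\Delta_0^\nu f(t)\ge 0$ for every $t\in\mathbb N_{1-\nu}$, which is exactly (\ref{U33}).

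The argument is essentially mechanical, so I do not expect a deep obstacle; the delicate part is the sign bookkeeping — keeping straight that $\Gamma(-\nu)<0$, that exactly one coefficient $c_m$ is negative, that the boundary term $(-\nu-1)^{(-\nu)}$ vanishes, and that the telescoping identity from Lemma \ref{pfp}(i) is applied in the right direction. Once $C_s<0$ is established, the monotonicity of $f$ together with $f(0)\ge 0$ closes the proof term by term.
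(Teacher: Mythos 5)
Your proof is correct, but note that the paper itself offers no proof of this statement: Theorem \ref{U3} is imported verbatim as Theorem 3.6 of \cite{Uyanik}, so there is nothing internal to compare against. What you have supplied is a self-contained derivation from the Leibniz-rule representation (\ref{a}), and every step checks out: with $m=t+\nu\in\mathbb{N}_1$ the coefficients $c_s=\Gamma(m-s-\nu)/\Gamma(m-s+1)$ are as you say (only $c_m=\Gamma(-\nu)<0$); the identity $(t-\sigma(j))^{(-\nu-1)}=\frac{1}{\nu}\Delta_j[(t-j)^{(-\nu)}]$ follows from Lemma \ref{pfp}(i) after the sign flip coming from differencing in $j$ rather than $t$ (a direct Gamma computation confirms it); the boundary term $(-\nu-1)^{(-\nu)}=\Gamma(-\nu)/\Gamma(0)$ vanishes by the paper's division-at-a-pole convention; hence $C_s=-\frac{1}{\nu}(t-s)^{(-\nu)}<0$ for $0\le s\le m$, and Abel summation together with $f(0)\ge 0$ and $\Delta f\ge 0$ gives $\sum_{s=0}^m c_sf(s)\le 0$, so dividing by $\Gamma(-\nu)<0$ yields (\ref{U33}). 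This is essentially the same mechanism used in the cited literature (isolate the one negative coefficient, telescope the tail sums, exploit monotonicity), so your argument is not a new route so much as a reconstruction of the omitted external proof; its value here is that it makes the paper self-contained at this point and makes explicit exactly where the hypotheses $f(0)\ge 0$ and monotonicity enter (through $C_0f(0)$ and the terms $C_s\Delta f(s-1)$ respectively).
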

By means of Caputo fractional differences, Theorem \ref{U3} takes the form:
\begin{thm}  \label{C6delta}
Let $f:\mathbb{N}_{0}\rightarrow \mathbb{R}$ be a function . Fix $\nu \in (0,1)$ and suppose that $f$ is increasing on $\mathbb{N}_0$ and $f(0)\geq 0$. Then
\begin{equation}\label{U333}
  ~^{C}\Delta_{0}^\nu f(t)\geq -\frac{t^{(-\nu)}}{\Gamma(1-\nu)}f(0) ,~~~\texttt{for each}~t \in \mathbb{N}_{1-\nu}.
\end{equation}
The proof follows by (\ref{relate1}) with $n=1$ and Theorem \ref{U3}.
\begin{rem}
We can prove the results of this section for nabla left Caputo fractional difference operators either by using the correspondence result for the delta left  Caputo fractional differences obtained in this section via the dual identity (\ref{lCdual one}) (for $1<\alpha <2$, $a(\alpha)=a+1$ and for $0<\alpha <1$, $a(\alpha)=a$) or by using the nabla Reimann fractional difference results proved in the previous section via the relation (\ref{nrelate1}).
\end{rem}
\end{thm}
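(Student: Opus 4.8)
The plan is to read off the conclusion directly from the Riemann--Caputo relation (\ref{relate1}) together with Theorem \ref{U3}. Since $0<\nu<1$ forces $n=[\nu]+1=1$, the sum in (\ref{relate1}) collapses to its single $k=0$ term, and with $a=0$ the identity reduces to (\ref{relate22}) in the form
\begin{equation}\label{c6pf1}
~^{C}\Delta_0^\nu f(t)=\Delta_0^\nu f(t)-\frac{t^{(-\nu)}}{\Gamma(1-\nu)}f(0),\qquad t\in\mathbb{N}_{1-\nu}.
\end{equation}
So the whole task is to control the Riemann term $\Delta_0^\nu f(t)$ from below.

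Next I would invoke the hypotheses: $f$ is increasing on $\mathbb{N}_0$ and $f(0)\ge 0$. These are exactly the hypotheses of Theorem \ref{U3} (Theorem 3.6 in \cite{Uyanik}), whose conclusion is
\begin{equation}\label{c6pf2}
\Delta_0^\nu f(t)\ge 0\quad\text{for each }t\in\mathbb{N}_{1-\nu}.
\end{equation}
Note the domain in (\ref{c6pf2}) matches the domain on which $~^{C}\Delta_0^\nu$ is defined, namely $\mathbb{N}_{1-\nu}=\mathbb{N}_{a+(n-\alpha)}$ with $a=0$, $n=1$, $\alpha=\nu$, so there is no index mismatch to reconcile.

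Substituting (\ref{c6pf2}) into (\ref{c6pf1}) gives
\begin{equation}\label{c6pf3}
~^{C}\Delta_0^\nu f(t)=\Delta_0^\nu f(t)-\frac{t^{(-\nu)}}{\Gamma(1-\nu)}f(0)\ge -\frac{t^{(-\nu)}}{\Gamma(1-\nu)}f(0),\qquad t\in\mathbb{N}_{1-\nu},
\end{equation}
which is precisely (\ref{U333}). This completes the argument.

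There is essentially no obstacle here; the proof is a one-line specialization. The only points requiring a moment's care are bookkeeping ones: checking that the truncation index is $n=1$ for $0<\nu<1$ so that only the $f(0)$ term survives in (\ref{relate1}), and verifying that $t^{(-\nu)}=\Gamma(t+1)/\Gamma(t+1+\nu)$ is finite and nonnegative on $\mathbb{N}_{1-\nu}$ so that the inequality in (\ref{c6pf3}) carries the correct sign — both of which are immediate from the definitions in Section 1. As noted in the remark, the same scheme yields the nabla left Caputo analogue, either through the dual identity (\ref{lCdual one}) with $a(\nu)=a$ for $0<\nu<1$, or directly via (\ref{nrelate1}) and Theorem \ref{UU2}.
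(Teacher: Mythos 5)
Your proof is correct and is exactly the argument the paper intends: specialize the Riemann--Caputo relation (\ref{relate1}) to $n=1$, $a=0$ (i.e.\ (\ref{relate22})), apply Theorem \ref{U3} to get $\Delta_0^\nu f(t)\ge 0$, and substitute. The paper gives only the one-line citation of these two ingredients, so your write-up is the same proof with the bookkeeping made explicit.
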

\section{Monotonicity results for right fractional difference types}

In this section, we use the monotonicity results for left fractional difference types discussed in the previous two sections and $Q-$operator dual identities (\ref{pr}), (\ref{rpr}) for delta and nabla Riemann fractional differences, and (\ref{cpr}), (\ref{crpr}) for delta and nabla Caputo fractional differences, to obtain monotonicity results for the right fractional difference types.

\begin{thm} \label{d1}
Assume $f:\mathbb{N}_a \cap~ _{b}\mathbb{N}\rightarrow \mathbb{R}$ is a function, $~_{b}\Delta^{\alpha}f(u)\geq 0$ for $u \in ~_{b-(2-\alpha)}\mathbb{N}$, with $1<\alpha <2$, such that $f(b-1)\geq f(b)\geq 0$. Then, $-\nabla f(t)\geq 0$ for all $t \in ~_{b}\mathbb{N}$.
\end{thm}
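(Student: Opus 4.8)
The plan is to deduce Theorem \ref{d1} from the delta left result Theorem \ref{JEP1} using the $Q$-operator dual identity (\ref{pr}). Set $g = Qf$, i.e. $g(s) = f(a+b-s)$, which is defined on $\mathbb{N}_a$ whenever $f$ is defined on $\mathbb{N}_a \cap {}_b\mathbb{N}$ with $a \equiv b \pmod 1$. First I would translate the hypothesis: by (\ref{pr}) we have $\Delta_a^\alpha g(t) = \Delta_a^\alpha Qf(t) = (Q\,{}_b\Delta^\alpha f)(t) = ({}_b\Delta^\alpha f)(a+b-t)$. As $u = a+b-t$ ranges over ${}_{b-(2-\alpha)}\mathbb{N}$ exactly when $t$ ranges over $\mathbb{N}_{a+(2-\alpha)} = \mathbb{N}_{a+2-\alpha}$, the assumption $~_b\Delta^\alpha f(u)\geq 0$ on ${}_{b-(2-\alpha)}\mathbb{N}$ is precisely $\Delta_a^\alpha g(t)\geq 0$ for all $t\in\mathbb{N}_{a+2-\alpha}$. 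Likewise the endpoint condition $f(b-1)\geq f(b)\geq 0$ becomes $g(a+1)\geq g(a)\geq 0$, since $g(a) = f(b)$ and $g(a+1) = f(b-1)$.

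Next I would apply Theorem \ref{JEP1} to $g$: all its hypotheses now hold ($g:\mathbb{N}_a\to\mathbb{R}$, $\Delta_a^\alpha g\geq 0$ on $\mathbb{N}_{a+2-\alpha}$ with $1<\alpha<2$, and $g(a+1)\geq g(a)\geq 0$), so we conclude $\Delta g(t)\geq 0$ for all $t\in\mathbb{N}_a$, i.e. $g$ is nondecreasing on $\mathbb{N}_a$. Finally I would translate this conclusion back through $Q$. Since $g(s) = f(a+b-s)$, we have $\Delta g(s) = g(s+1) - g(s) = f(a+b-s-1) - f(a+b-s) = -\nabla f(a+b-s)$ (using $\nabla f(t) = f(t) - f(t-1)$ with $t = a+b-s$). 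Hence $\Delta g(s)\geq 0$ for all $s\in\mathbb{N}_a$ says exactly $-\nabla f(t)\geq 0$ for all $t$ of the form $a+b-s$ with $s\in\mathbb{N}_a$, that is, for all $t\in{}_b\mathbb{N}$. This is the desired conclusion, so $f$ is nonincreasing on ${}_b\mathbb{N}$.

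The argument is essentially a bookkeeping exercise, so there is no deep obstacle; the one place to be careful is the index translation under $u = a+b-s$ — making sure the domains $\mathbb{N}_{a+2-\alpha}$ and ${}_{b-(2-\alpha)}\mathbb{N}$ correspond correctly, and that the shift by one in the backward difference $-Q\nabla f = \Delta Qf$ (the identity already recorded before (\ref{npr sum})) is applied with the right sign. I would present the proof compactly: introduce $g=Qf$, cite (\ref{pr}) and the relation $\Delta Qf(t) = -Q\nabla f(t)$, verify the three transformed hypotheses in one or two lines, invoke Theorem \ref{JEP1}, and read off the conclusion. An entirely parallel remark (which the paper will presumably make for the subsequent right-sided theorems) is that the same device converts each of Theorems \ref{Slov1}, \ref{slov2}, \ref{slov3} and the Caputo versions into their right-sided counterparts via (\ref{pr}) and (\ref{cpr}).
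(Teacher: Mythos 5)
Your proposal is correct and follows essentially the same route as the paper's own proof: introduce $g=Qf$, use the dual identity (\ref{pr}) to transfer the hypotheses to $\Delta_a^\alpha g\geq 0$ on $\mathbb{N}_{a+2-\alpha}$ together with $g(a+1)\geq g(a)\geq 0$, invoke Theorem \ref{JEP1}, and translate $\Delta g\geq 0$ back to $-\nabla f\geq 0$ on $~_{b}\mathbb{N}$. Your version merely spells out the index bookkeeping in more detail than the paper does.
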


\begin{proof}
From the dual identity (\ref{pr}) we have $(\Delta_a^\alpha g)(t)=(Q~_{b}\Delta^\alpha f)(t)$ for all $t \in \mathbb{N}_{a+2-\alpha}$, where $g(t)=f(a+b-t)$. The assumption, $~_{b}\Delta^{\alpha}f(u)\geq 0$ for $u \in ~_{b-(2-\alpha)}\mathbb{N}$ implies that
$\Delta_a^\alpha g(t)\geq 0$ for all $t \in \mathbb{N}_{a+2-\alpha}$. On the other hand, the assumption $f(b-1)\geq f(b)\geq0$ implies that $g(a+1)\geq g(a)\geq 0$. Therefore, Theorem \ref{JEP1} applied to $g(t)=f(a+b-t)$ implies that $\Delta g(t)\geq 0$ for all $t \in \mathbb{N}_a$. Which means that $-\nabla f(t)\geq 0$ for all $t \in ~_{b}\mathbb{N}$.
\end{proof}
The nabla version of Theorem \ref{d1} is then,
\begin{thm} \label{n1}
Assume $f:~_{b+1}\mathbb{N}\rightarrow \mathbb{R}$ is a function, $~_{b+1}\nabla^{\alpha}f(u)\geq 0$ for $u \in ~_{b}\mathbb{N}$, with $1<\alpha <2$. Then, $-\Delta f(t)\geq 0$ for all $t \in ~_{b-1}\mathbb{N}$.
\end{thm}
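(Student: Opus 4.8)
The plan is to mirror the proof of Theorem \ref{d1}, but now using the nabla-to-right-nabla $Q$-operator dual identity \eqref{rpr} together with the already-established nabla monotonicity result Theorem \ref{JEPp}. First I would introduce the reflected function $g(t) = f(a+b-t)$ (where $b$ is chosen so that $a \equiv b+1 \pmod 1$ makes the $Q$-operator act on $f$ defined on $~_{b+1}\mathbb{N}$, matching $g$ defined on $\mathbb{N}_{a-1}$). Note that the companion base point here is $a-1$ on the left side, consistent with $~_{b+1}\mathbb{N}$ on the right since $(b+1) - n + \cdots$ lines up with $a-1$; concretely I would write $g = Qf$ with $g:\mathbb{N}_{a-1}\to\mathbb{R}$.

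Next I would invoke \eqref{rpr}, in the form $\nabla_{a-1}^\alpha g(t) = (Q\, ~_{b+1}\nabla^\alpha f)(t)$, so that the hypothesis $~_{b+1}\nabla^\alpha f(u) \ge 0$ for all $u \in ~_{b}\mathbb{N}$ translates directly into $\nabla_{a-1}^\alpha g(t) \ge 0$ for all $t$ in the corresponding left-sided domain $\mathbb{N}_{a}$. Then Theorem \ref{JEPp} applied to $g$ yields $\nabla g(t) \ge 0$ for all $t \in \mathbb{N}_{a+1}$, i.e. $g$ is nondecreasing. Finally I would translate this back: since $g(t) = f(a+b-t)$, we have $\nabla g(t) = g(t) - g(t-1) = f(a+b-t) - f(a+b-t+1) = -\Delta f(a+b-t)$, and as $t$ ranges over $\mathbb{N}_{a+1}$ the argument $a+b-t$ ranges over $~_{b-1}\mathbb{N}$; hence $-\Delta f(s) \ge 0$ for all $s \in ~_{b-1}\mathbb{N}$, which is the conclusion.

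One point I would be careful to check is that no separate initial-condition hypothesis on $f$ is needed — and indeed Theorem \ref{JEPp} has no such hypothesis (unlike Theorem \ref{JEP1}), which is exactly why the statement of Theorem \ref{n1} carries no analogue of the $f(b-1) \ge f(b) \ge 0$ condition that appears in Theorem \ref{d1}. The genuinely delicate part of the argument is bookkeeping the index shifts: verifying that the $Q$-operator sends the domain $~_{b+1}\mathbb{N}$ precisely onto $\mathbb{N}_{a-1}$, that the hypothesis domain $~_{b}\mathbb{N}$ matches $\mathbb{N}_{a}$ under reflection, and that the conclusion domain $\mathbb{N}_{a+1}$ matches $~_{b-1}\mathbb{N}$; these are forced by the requirement $a \equiv b+1 \pmod 1$ and the relations $-Q\Delta f(t) = \nabla Q f(t)$, $-Q\nabla f(t) = \Delta Q f(t)$ used in deriving \eqref{rpr}. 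Once the domains are aligned, the proof is a one-line application of \eqref{rpr} and Theorem \ref{JEPp}, so I would keep it short, as the authors do for Theorem \ref{slov22}.
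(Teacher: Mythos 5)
Your proposal is correct and follows essentially the same route as the paper's own proof: reflect via $g=Qf$, apply the $Q$-operator dual identity \eqref{rpr} to transfer the hypothesis to $\nabla_{a-1}^\alpha g(t)\geq 0$ on $\mathbb{N}_a$, invoke Theorem \ref{JEPp}, and translate $\nabla g\geq 0$ on $\mathbb{N}_{a+1}$ back to $-\Delta f\geq 0$ on $~_{b-1}\mathbb{N}$. Your observation that no starting condition is needed (since Theorem \ref{JEPp} carries none) matches the paper exactly.
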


\begin{proof}
From the dual identity (\ref{rpr}) we have $(\nabla_{a-1}^\alpha g)(t)=(Q~_{b+1}\nabla^\alpha f)(t)$ for all $t \in \mathbb{N}_{a}$, where $g(t)=f(a+b-t)$. The assumption, $~_{b+1}\nabla^{\alpha}f(u)\geq 0$ for $u \in ~_{b}\mathbb{N}$ implies that
$\nabla_{a-1}^\alpha g(t)\geq 0$ for all $t \in \mathbb{N}_{a}$.  Therefore, Theorem \ref{JEPp} applied to $g(t)=f(a+b-t)$ implies that $\nabla g(t)\geq 0$ for all $t \in \mathbb{N}_{a+1}$. Which means that $-\Delta f(t)\geq 0$ for all $t \in ~_{b-1}\mathbb{N}$.
\end{proof}
Similarly, the proof of the following three theorems follow by the dual identity (\ref{pr}) and Theorem \ref{Slov1}, Theorem \ref{slov2} and Theorem \ref{slov3}, respectively.
\begin{thm} \label{d2}
Assume $f:\mathbb{N}_a \cap ~_{b}\mathbb{N}\rightarrow \mathbb{R}$ is a function, $~_{b}\Delta^{\alpha}f(u)\geq 0$ for $u \in ~_{b-(2-\alpha)}\mathbb{N}$, with $1<\alpha <2$, such that $f(b-1)\geq \frac{\alpha}{k+1}f(b)\geq 0,~~k \in \mathbb{N}_0$. Then, $-\nabla f(t)\geq 0$ for all $t \in ~_{b-1}\mathbb{N}$.
\end{thm}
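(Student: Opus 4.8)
The plan is to imitate the proof of Theorem~\ref{d1}, pushing the right-sided hypothesis over to a left-sided one via the $Q$-operator and then applying Theorem~\ref{Slov1} in place of Theorem~\ref{JEP1}. Since $f$ is defined on $\mathbb{N}_a\cap~_{b}\mathbb{N}$ we have $a\equiv b\pmod 1$, so the reflected function $g(t)=(Qf)(t)=f(a+b-t)$ is well defined on $\mathbb{N}_a$. By the dual identity~(\ref{pr}), $\Delta_a^\alpha g(t)=\Delta_a^\alpha(Qf)(t)=(Q~_{b}\Delta^\alpha f)(t)=(~_{b}\Delta^\alpha f)(a+b-t)$ for all $t\in\mathbb{N}_{a+2-\alpha}$. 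As $t$ runs over $\mathbb{N}_{a+2-\alpha}$ the argument $a+b-t$ runs over $~_{b-(2-\alpha)}\mathbb{N}$, so the hypothesis $~_{b}\Delta^\alpha f(u)\geq 0$ on $~_{b-(2-\alpha)}\mathbb{N}$ yields $\Delta_a^\alpha g(t)\geq 0$ for all $t\in\mathbb{N}_{a+2-\alpha}$.

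Next I would transcribe the boundary condition. Because $Q$ sends $a\mapsto b$ and $a+1\mapsto b-1$, we have $g(a)=f(b)$ and $g(a+1)=f(b-1)$, so the assumption $f(b-1)\geq\frac{\alpha}{k+1}f(b)\geq 0$ for each $k\in\mathbb{N}_0$ becomes precisely $g(a+1)\geq\frac{\alpha}{k+1}g(a)\geq 0$ for each $k\in\mathbb{N}_0$. Hence $g$ meets all the hypotheses of Theorem~\ref{Slov1} with $\nu=\alpha$, and we obtain $\Delta g(t)\geq 0$ for all $t\in\mathbb{N}_{a+1}$.

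Finally I would undo the reflection: for $t\in\mathbb{N}_{a+1}$ we have $\Delta g(t)=g(t+1)-g(t)=f(a+b-t-1)-f(a+b-t)=-\nabla f(a+b-t)$, and the map $t\mapsto a+b-t$ carries $\mathbb{N}_{a+1}$ onto $~_{b-1}\mathbb{N}$. Therefore $-\nabla f(s)\geq 0$ for all $s\in~_{b-1}\mathbb{N}$, which is exactly the assertion.

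I do not expect a genuine obstacle here; the argument is bookkeeping with the involution $s\mapsto a+b-s$. The only thing that needs care is the correct matching of the index sets under this involution (namely $\mathbb{N}_{a+2-\alpha}\leftrightarrow~_{b-(2-\alpha)}\mathbb{N}$ and $\mathbb{N}_{a+1}\leftrightarrow~_{b-1}\mathbb{N}$) together with checking that the orientation of the quotient-type condition in Theorem~\ref{Slov1}, stated for $f(a+1)$ relative to $f(a)$, lines up with what $Q$ produces — which it does, since $Q$ reverses the order and the right-sided boundary data sit at $b-1$ and $b$. The corresponding nabla statement would be proved the same way, replacing~(\ref{pr}) by~(\ref{rpr}) and Theorem~\ref{Slov1} by Theorem~\ref{Slov11}.
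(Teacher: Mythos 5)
Your proposal is correct and is exactly the argument the paper intends: the paper proves Theorem~\ref{d1} by reflecting with $g(t)=f(a+b-t)$, invoking the dual identity~(\ref{pr}) and Theorem~\ref{JEP1}, and then disposes of Theorem~\ref{d2} with the remark that the proof is the same with Theorem~\ref{Slov1} in place of Theorem~\ref{JEP1}, which is precisely what you carried out. Your explicit bookkeeping of the index sets $\mathbb{N}_{a+2-\alpha}\leftrightarrow~_{b-(2-\alpha)}\mathbb{N}$ and $\mathbb{N}_{a+1}\leftrightarrow~_{b-1}\mathbb{N}$ and of the boundary condition under $Q$ is the detail the paper leaves implicit.
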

\begin{thm} \label{d3}
Assume $f:\mathbb{N}_a \cap ~_{b}\mathbb{N}\rightarrow \mathbb{R}$ is a function, $~_{b}\Delta^{\alpha}f(u)\geq 0$ for $u \in ~_{b-(2-\alpha)}\mathbb{N}$, with $1<\alpha <2$, such that $f(b-2)\geq \frac{\alpha}{k+2}f(b-1)+\frac{(k+1-\alpha)\nu}{(k+2)(k+3)}f(b)$  for each $k \in \mathbb{N}_1$,. Then, $-\nabla f(t)\geq 0$ for all $t \in ~_{b-2}\mathbb{N}$.
\end{thm}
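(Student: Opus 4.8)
The plan is to mimic exactly the proof of Theorem \ref{d1}, replacing the invocation of Theorem \ref{JEP1} with that of Theorem \ref{slov2}. First I would set $g(t) = f(a+b-t) = (Qf)(t)$, noting that since $f$ is defined on $\mathbb{N}_a \cap {}_b\mathbb{N}$ with $a \equiv b \pmod 1$, the $Q$-operator applies and $g$ is defined on the corresponding domain. The $Q$-operator dual identity (\ref{pr}), namely $\Delta_a^\alpha Qf(t) = (Q\,{}_b\Delta^\alpha f)(t)$, then converts the hypothesis ${}_b\Delta^\alpha f(u) \geq 0$ for $u \in {}_{b-(2-\alpha)}\mathbb{N}$ into $\Delta_a^\alpha g(t) \geq 0$ for all $t \in \mathbb{N}_{a+2-\alpha}$, since $Q$ merely reflects the argument and preserves sign.

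Next I would translate the initial-condition hypothesis. The assumed inequality $f(b-2) \geq \frac{\alpha}{k+2} f(b-1) + \frac{(k+1-\alpha)\alpha}{(k+2)(k+3)} f(b)$ for each $k \in \mathbb{N}_1$ should become, under the substitution that sends $b \mapsto a$, $b-1 \mapsto a+1$, $b-2 \mapsto a+2$ (because $g(a+j) = f(b-j)$), precisely the hypothesis of Theorem \ref{slov2}: $g(a+2) \geq \frac{\nu}{k+2} g(a+1) + \frac{(k+1-\nu)\nu}{(k+2)(k+3)} g(a)$ for each $k \in \mathbb{N}_1$, with $\nu = \alpha$. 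With both hypotheses of Theorem \ref{slov2} verified for $g$, I conclude $\Delta g(t) \geq 0$ for all $t \in \mathbb{N}_{a+2}$.

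Finally I would convert back: $\Delta g(t) = g(t+1) - g(t) = f(a+b-t-1) - f(a+b-t) = -\nabla f(a+b-t)$ (using $\nabla f(s) = f(s) - f(s-1)$ with $s = a+b-t$). As $t$ ranges over $\mathbb{N}_{a+2}$, the quantity $a+b-t$ ranges over ${}_{b-2}\mathbb{N}$, so $\Delta g(t) \geq 0$ on $\mathbb{N}_{a+2}$ is exactly $-\nabla f(t) \geq 0$ for all $t \in {}_{b-2}\mathbb{N}$, which is the claim.

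The only real subtlety—and the step I would check most carefully—is the bookkeeping in the domain translation and the index shift $b-j \leftrightarrow a+j$ inside the weighted initial-condition inequality; one must confirm that the coefficients in the statement of Theorem \ref{d3} line up verbatim with those in Theorem \ref{slov2} after the reflection, and that the quantifier ``for each $k \in \mathbb{N}_1$'' is preserved untouched. (I note in passing that the paper's displayed inequality in Theorem \ref{d3} writes $\nu$ in the numerator of the second term rather than $\alpha$; this is a typo, and the intended reading is with $\alpha$ throughout, matching $\nu = \alpha$ in Theorem \ref{slov2}.) Everything else is a direct, sign-preserving transport along the $Q$-operator identity, so no genuine analytic obstacle arises.
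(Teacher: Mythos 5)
Your proposal is correct and follows exactly the route the paper intends: the paper disposes of Theorem \ref{d3} in one line by citing the $Q$-operator dual identity (\ref{pr}) together with Theorem \ref{slov2}, and your write-up simply fills in the reflection bookkeeping ($g(a+j)=f(b-j)$, $\Delta g(t)=-\nabla f(a+b-t)$) that the paper leaves implicit. Your observation that the $\nu$ in the displayed coefficient of Theorem \ref{d3} is a typo for $\alpha$ is also consistent with the statement of Theorem \ref{slov2}.
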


\begin{thm} \label{d4}
Assume $f:\mathbb{N}_a \cap ~_{b}\mathbb{N}\rightarrow \mathbb{R}$ is a function, $~_{b}\Delta^{\alpha}f(u)\geq 0$ for $u \in ~_{b-(2-\alpha)}\mathbb{N}$, with $1<\alpha <2$, such that $f(b-3)\geq \frac{\alpha}{k}f(b-2)+\frac{(k-\alpha)\alpha}{(k)(k+1)}f(b-1)+\frac{(k+1-\alpha)(k-\nu)\alpha} {(k+1)(k+2)k}f(b)$  for each $k \in \mathbb{N}_2$,. Then, $-\nabla f(t)\geq 0$ for all $t \in ~_{b-3}\mathbb{N}$.
\end{thm}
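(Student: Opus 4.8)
The plan is to mimic the proofs of Theorems \ref{d1} and \ref{n1}, transferring the delta right hypothesis on $f$ to a delta left hypothesis on $Qf$ via the $Q$-operator identity (\ref{pr}), and then invoking Theorem \ref{slov3}. First I would set $g = Qf$, i.e.\ $g(t) = f(a+b-t)$, which is well defined on $\mathbb{N}_a$ because $f$ is defined on $\mathbb{N}_a \cap {}_b\mathbb{N}$ and (implicitly) $a \equiv b \pmod 1$. Applying (\ref{pr}),
\[
\Delta_a^\alpha g(t) = (\Delta_a^\alpha Qf)(t) = (Q\, {}_b\Delta^\alpha f)(t) = ({}_b\Delta^\alpha f)(a+b-t), \qquad t \in \mathbb{N}_{a+2-\alpha}.
\]
As $t$ ranges over $\mathbb{N}_{a+2-\alpha}$, the argument $a+b-t$ ranges over ${}_{b-(2-\alpha)}\mathbb{N}$, so the hypothesis ${}_b\Delta^\alpha f(u)\ge 0$ on ${}_{b-(2-\alpha)}\mathbb{N}$ is exactly $\Delta_a^\alpha g(t)\ge 0$ for all $t \in \mathbb{N}_{a+2-\alpha}$.

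Next I would translate the four-point boundary inequality. Since $g(a+j) = f(b-j)$ for $j = 0,1,2,3$, the assumed inequality on $f$ becomes, for every $k \in \mathbb{N}_2$,
\[
g(a+3) \ge \frac{\alpha}{k}\,g(a+2) + \frac{(k-\alpha)\alpha}{k(k+1)}\,g(a+1) + \frac{(k+1-\alpha)(k-\alpha)\alpha}{(k+1)(k+2)k}\,g(a),
\]
which is precisely the starting condition in Theorem \ref{slov3} applied to $g$ with $\nu = \alpha$ (the letter $\nu$ appearing in the statement of this theorem being a typo for $\alpha$). Hence Theorem \ref{slov3} applies to $g$ and gives $\Delta g(t) \ge 0$ for all $t \in \mathbb{N}_{a+3}$.

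Finally I would read this back in terms of $f$. For $t \in \mathbb{N}_{a+3}$,
\[
\Delta g(t) = g(t+1) - g(t) = f(a+b-t-1) - f(a+b-t) = -\nabla f(a+b-t),
\]
and as $t$ runs over $\mathbb{N}_{a+3}$ the point $s = a+b-t$ runs over ${}_{b-3}\mathbb{N}$. Therefore $-\nabla f(s) \ge 0$ for all $s \in {}_{b-3}\mathbb{N}$, which is the assertion.

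The only delicate points are bookkeeping ones: verifying that the index sets correspond correctly under the reflection $s \mapsto a+b-s$ (so that "for each $t\in\mathbb{N}_{a+2-\alpha}$" mirrors "for each $u\in{}_{b-(2-\alpha)}\mathbb{N}$", and likewise the conclusion's domain), and confirming that the three coefficient expressions in the hypothesis coincide verbatim with those of Theorem \ref{slov3} after substituting $g(a+j) = f(b-j)$. Neither is a genuine obstacle; all the analytic work is hidden inside Theorem \ref{slov3}, which may be assumed.
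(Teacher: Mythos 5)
Your proposal is correct and follows exactly the route the paper intends: the paper disposes of Theorem \ref{d4} in one line by citing the dual identity (\ref{pr}) together with Theorem \ref{slov3}, applied to $g=Qf$ just as in the proof of Theorem \ref{d1}. Your write-up merely supplies the reflection bookkeeping ($g(a+j)=f(b-j)$, the correspondence of index sets under $s\mapsto a+b-s$, and the observation that the stray $\nu$ in the coefficient is a typo for $\alpha$) that the paper leaves implicit.
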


The next two theorems treat the case when $0<\alpha<1$ for the delta right fractional difference operator.

\begin{thm} \label{d5}
Assume $f: \mathbb{N}_0 \cap ~_{b}\mathbb{N}\rightarrow \mathbb{R}$ ($a=0$ is taken, $b>0$) be a function, $0<\alpha<1$ and $f(b)\geq 0$. Suppose $~_{b}\Delta^\alpha f(u)\geq 0$ for $_{b-(1-\alpha)}\mathbb{N}$. Then, $f$ is $\alpha-$decreasing on $~_{b}\mathbb{N}$. That is $f(t)\geq \alpha f(t+1)$ for all $t \in  ~_{b}\mathbb{N}$.
\end{thm}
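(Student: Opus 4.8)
The plan is to mirror the proofs of Theorems \ref{d1}--\ref{d4}: transport the statement to the left delta fractional difference via the $Q$-operator and then invoke Theorem \ref{U1}. Since $0<\alpha<1$ we have $n=[\alpha]+1=1$, so $n-\alpha=1-\alpha$. Because $a=0$, the reflection is $g(t):=(Qf)(t)=f(a+b-t)=f(b-t)$, which is again a function on $\mathbb{N}_0\cap{}_{b}\mathbb{N}$; only finitely many of its values enter the argument, so it may be regarded as defined on $\mathbb{N}_0$ as required by Theorem \ref{U1}.

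First I would apply the dual identity (\ref{pr}) with $a=0$ to obtain $\Delta_0^\alpha g(t)=(Q\,_{b}\Delta^\alpha f)(t)={}_{b}\Delta^\alpha f(b-t)$ for all $t\in\mathbb{N}_{1-\alpha}$ (this is where it matters that the cutoff is $1-\alpha$ rather than $2-\alpha$, since $n=1$ here). Next I would check that the domains line up: $b-t\in{}_{b-(1-\alpha)}\mathbb{N}$ exactly when $t\ge 1-\alpha$, i.e. $t\in\mathbb{N}_{1-\alpha}$, so the hypothesis $\,_{b}\Delta^\alpha f(u)\ge 0$ on ${}_{b-(1-\alpha)}\mathbb{N}$ yields $\Delta_0^\alpha g(t)\ge 0$ for every $t\in\mathbb{N}_{1-\alpha}$. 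Also $g(0)=f(b)\ge 0$ by assumption, so both hypotheses of Theorem \ref{U1} (with $\nu=\alpha$) hold for $g$.

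Then Theorem \ref{U1} gives that $g$ is $\alpha$-increasing on $\mathbb{N}_0$, that is $g(t+1)\ge\alpha g(t)$ for all $t\in\mathbb{N}_0$. Unwinding $g=Qf$: $g(t+1)=f(b-t-1)$ and $g(t)=f(b-t)$, so $f(b-t-1)\ge\alpha f(b-t)$ for all $t\in\mathbb{N}_0$. Setting $s=b-t-1$, this is precisely $f(s)\ge\alpha f(s+1)$ for all $s\in{}_{b-1}\mathbb{N}$, i.e. $f$ is $\alpha$-decreasing on ${}_{b}\mathbb{N}$, which is the desired conclusion.

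The argument is essentially bookkeeping and I do not expect a genuine obstacle; the only point needing care is the index alignment under the reflection — the domain of validity of the $Q$-identity and the final substitution $s=b-t-1$, which is the step that produces the $\alpha$-decreasing inequality on ${}_{b}\mathbb{N}$ (the value $t=b$ being vacuous, since $f(b+1)$ is not defined, so strictly the conclusion is on ${}_{b-1}\mathbb{N}$).
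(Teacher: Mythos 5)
Your proposal is correct and follows exactly the route the paper indicates: the paper's entire proof is the single line that the result ``follows by the dual identity (\ref{pr}) and Theorem \ref{U1} applied to $g(t)=f(b-t)$,'' and you have simply filled in the index bookkeeping for that argument. The details you supply (the domain alignment $b-t\in{}_{b-(1-\alpha)}\mathbb{N}\Leftrightarrow t\in\mathbb{N}_{1-\alpha}$, the check $g(0)=f(b)\ge 0$, and the observation that the conclusion is effectively on ${}_{b-1}\mathbb{N}$) are all accurate.
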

The proof follows by the dual identity (\ref{pr}) and Theorem \ref{U1} applied to $g(t)=f(b-t)$.

Conversely, we can state:
\begin{thm} \label{d6}
Assume $f: \mathbb{N}_0 \cap ~_{b}\mathbb{N}\rightarrow \mathbb{R}$ ($a=0$ is taken, $b>0$) be a function, $0<\alpha<1$ and $f(b)\geq 0$. Assume  $f$ is decreasing on $~_{b}\mathbb{N}$. Then, $~_{b}\Delta^\alpha f(u)\geq 0$ for all  $t \in ~_{b-(1-\alpha)}\mathbb{N}$.
\end{thm}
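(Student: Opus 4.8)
The plan is to mirror the proof of Theorem \ref{d5} (and the right-versions Theorems \ref{d1}--\ref{d4}) by transporting the hypothesis through the $Q$-operator dual identity (\ref{pr}) and then invoking the already-established ``converse'' delta result for the left fractional difference operator, namely Theorem \ref{U3}. Set $a=0$, fix $b>0$, and define $g(t)=(Qf)(t)=f(b-t)$ on $\mathbb{N}_0$. The hypothesis that $f$ is decreasing on $~_{b}\mathbb{N}$ means $f(t)\geq f(t-1)$ for the relevant $t$, which upon the substitution $t\mapsto b-t$ translates into $g(t+1)=f(b-t-1)\leq f(b-t)=g(t)$ for $t\in\mathbb{N}_0$ --- wait, that gives $g$ decreasing, not increasing, so I must be careful about the direction.

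Rechecking: if $f$ is decreasing on $~_{b}\mathbb{N}=\{b,b-1,b-2,\dots\}$ in the sense of $f(s)\le f(s-1)$, i.e. moving to the right (increasing $s$) on that downward-indexed set the values decrease, then $g(t)=f(b-t)$ satisfies $g(t+1)=f(b-t-1)$; since $b-t-1<b-t$ and $f$ decreasing along $~_{b}\mathbb{N}$ means $f(b-t-1)\ge f(b-t)$, we get $g(t+1)\ge g(t)$, so $g$ is \emph{increasing} on $\mathbb{N}_0$. Also $g(0)=f(b)\geq 0$ by hypothesis. Thus $g$ satisfies exactly the hypotheses of Theorem \ref{U3}, which yields $\Delta_0^\alpha g(u)\geq 0$ for each $u\in\mathbb{N}_{1-\alpha}$. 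Now apply the $Q$-operator dual identity (\ref{pr}), $\Delta_0^\alpha (Qf)(t)=(Q~_{b}\Delta^\alpha f)(t)$, valid for $t\in\mathbb{N}_{a+n-\alpha}=\mathbb{N}_{1-\alpha}$ when $n=1$; since $g=Qf$, the left side is $\ge 0$, hence $(Q~_{b}\Delta^\alpha f)(t)\geq 0$ on $\mathbb{N}_{1-\alpha}$, and applying $Q$ once more (an involution on the common domain) gives $~_{b}\Delta^\alpha f(u)\geq 0$ for all $u\in~_{b-(1-\alpha)}\mathbb{N}$, as claimed.

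The only real point requiring care --- and the step I would flag as the main obstacle --- is the bookkeeping of domains and of the ``increasing/decreasing'' convention under the reflection $s\mapsto b-s$: one must verify that ``$f$ decreasing on $~_{b}\mathbb{N}$'' becomes precisely ``$g$ increasing on $\mathbb{N}_0$'' (not the reverse), that $g(0)=f(b)$ so the sign condition $f(b)\geq 0$ is the right one to match Theorem \ref{U3}'s requirement $g(0)\geq 0$, and that the index set $\mathbb{N}_{1-\alpha}$ for $\Delta_0^\alpha$ maps under $Q$ exactly onto $~_{b-(1-\alpha)}\mathbb{N}$ for $~_{b}\Delta^\alpha$. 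All of these are routine given the stated action of $Q$ in (\ref{sum pr})--(\ref{pr}) and the domain remarks following Definition \ref{fractional differences}, so the proof is short. I would write it in one paragraph: introduce $g(t)=f(b-t)$, observe $g$ is increasing on $\mathbb{N}_0$ with $g(0)\geq 0$, apply Theorem \ref{U3}, then push through (\ref{pr}) and conclude.
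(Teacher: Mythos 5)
Your proposal is correct and follows exactly the route the paper takes: the paper's entire proof of this theorem is the one-line remark that it ``follows by the dual identity (\ref{pr}) and Theorem \ref{U3} applied to $g(t)=f(b-t)$.'' Your write-up simply fills in the details of that reduction (the reflection sending ``decreasing on $~_{b}\mathbb{N}$'' to ``increasing on $\mathbb{N}_0$,'' the matching of $g(0)=f(b)\geq 0$, and the domain correspondence under $Q$), all of which you resolve correctly.
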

The proof follows by the dual identity (\ref{pr}) and Theorem \ref{U3} applied to $g(t)=f(b-t)$.

\begin{rem}
\begin{enumerate}
  \item The delta right fractional difference results in this section can be carried to nabla right fractional differences via the identity (\ref{rpr}) as we did in Theorem \ref{n1} by using the results of Section 2.
  \item  The results obtained in this section can be carried to Caputo right fractional differences via the identity (\ref{relate2}) in the delta case and (\ref{nrelate2}) in the nabla case. For example, the Caputo version of Theorem \ref{d1} is

      \begin{thm} \label{cd1}
Assume $f:\mathbb{N}_a \cap~ _{b}\mathbb{N}\rightarrow \mathbb{R}$ is a function, $~_{b}^{C}\Delta^{\alpha}f(u)\geq -\frac{(b-u)^{(-\alpha)}}{\Gamma(1-\alpha)}f(b)+\frac{(b-u)^{(1-\alpha)}}{\Gamma(\alpha)}\nabla f(b)$ for $u \in ~_{b-(2-\alpha)}\mathbb{N}$, with $1<\alpha <2$, such that $f(b-1)\geq f(b)\geq 0$. Then, $-\nabla f(t)\geq 0$ for all $t \in ~_{b}\mathbb{N}$.
\end{thm}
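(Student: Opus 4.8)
The plan is to reduce Theorem \ref{cd1} to its Riemann counterpart, Theorem \ref{d1}, via the relation (\ref{relate2}) between right Caputo and right Riemann fractional differences. Since $1<\alpha<2$ we have $n=[\alpha]+1=2$, so (\ref{relate2}) with $n=2$ reads
\begin{equation*}
~_{b}^{C}\Delta^\alpha f(t)=~_{b}\Delta^\alpha f(t)-\frac{(b-t)^{(-\alpha)}}{\Gamma(1-\alpha)}\,\nabla_{\ominus}^0 f(b)-\frac{(b-t)^{(1-\alpha)}}{\Gamma(2-\alpha)}\,\nabla_{\ominus}^1 f(b).
\end{equation*}
Using $\nabla_{\ominus}^0 f(b)=f(b)$ and $\nabla_{\ominus}^1 f(b)=(-1)\nabla f(b)=-\nabla f(b)$, the second boundary term becomes $+\frac{(b-t)^{(1-\alpha)}}{\Gamma(2-\alpha)}\nabla f(b)$, so the identity takes the form
\begin{equation*}
~_{b}\Delta^\alpha f(t)=~_{b}^{C}\Delta^\alpha f(t)+\frac{(b-t)^{(-\alpha)}}{\Gamma(1-\alpha)}f(b)-\frac{(b-t)^{(1-\alpha)}}{\Gamma(2-\alpha)}\nabla f(b).
\end{equation*}

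First I would substitute this into the hypothesis. The assumed lower bound $~_{b}^{C}\Delta^{\alpha}f(u)\geq -\frac{(b-u)^{(-\alpha)}}{\Gamma(1-\alpha)}f(b)+\frac{(b-u)^{(1-\alpha)}}{\Gamma(2-\alpha)}\nabla f(b)$ is, after adding the two boundary correction terms, precisely the inequality $~_{b}\Delta^{\alpha}f(u)\geq 0$ for every $u\in{}_{b-(2-\alpha)}\mathbb{N}$. Thus the Caputo hypothesis on $f$ is equivalent to the Riemann hypothesis of Theorem \ref{d1}. Together with the endpoint condition $f(b-1)\geq f(b)\geq 0$, which is carried over verbatim, all the hypotheses of Theorem \ref{d1} are met.

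Then I would simply invoke Theorem \ref{d1} applied to $f$ to conclude that $-\nabla f(t)\geq 0$ for all $t\in{}_{b}\mathbb{N}$, which is the desired conclusion. There is essentially no genuine obstacle here: the proof is a one-line translation once (\ref{relate2}) is applied correctly. The only delicate point is the sign and index bookkeeping in the term $\nabla_{\ominus}^k f(b)$ for $k=0,1$, and matching the resulting boundary coefficients against those in the statement; in particular the coefficient of $\nabla f(b)$ should read $\tfrac{(b-u)^{(1-\alpha)}}{\Gamma(2-\alpha)}$ (the displayed $\Gamma(\alpha)$ in the statement being a misprint for $\Gamma(2-\alpha)$). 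Finally, I would remark that the analogous nabla right Caputo version follows identically from Theorem \ref{n1} together with (\ref{nrelate2}).
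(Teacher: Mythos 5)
Your proposal is correct and follows exactly the route the paper intends: the remark containing Theorem \ref{cd1} states that the result is obtained from Theorem \ref{d1} via the identity (\ref{relate2}) with $n=2$, which is precisely your reduction. Your observation that the coefficient of $\nabla f(b)$ should carry $\Gamma(2-\alpha)$ rather than the printed $\Gamma(\alpha)$ is also right, as comparison with the left-sided analogues (e.g.\ Theorem \ref{C1delta}) confirms.
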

and the Caputo version of Theorem \ref{d5} is

\begin{thm} \label{cd5}
Assume $f: \mathbb{N}_0 \cap ~_{b}\mathbb{N}\rightarrow \mathbb{R}$ ($a=0$ is taken, $b>0$) be a function, $0<\alpha<1$ and $f(b)\geq 0$. Suppose $~_{b}^{C}\Delta^\alpha f(u)\geq -\frac{(b-u)^{(-\alpha)}}{\Gamma(1-\alpha)}f(b)$ for $u \in _{b-(1-\alpha)}\mathbb{N}$. Then, $f$ is $\alpha-$decreasing on $~_{b}\mathbb{N}$. That is $f(t)\geq \alpha f(t+1)$ for all $t \in  ~_{b}\mathbb{N}$.
\end{thm}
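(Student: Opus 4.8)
The plan is to reduce Theorem \ref{cd5} to its Riemann counterpart, Theorem \ref{d5}, by means of the Caputo--Riemann relation (\ref{relate4}). Since $0<\alpha<1$ we are in the case $n=1$, where (\ref{relate4}) may be rewritten as
\begin{equation*}
~_{b}\Delta^\alpha f(t)=~_{b}^{C}\Delta^\alpha f(t)+\frac{(b-t)^{(-\alpha)}}{\Gamma(1-\alpha)}f(b)
\end{equation*}
for $t\in{}_{b-(1-\alpha)}\mathbb{N}$. This is the only structural ingredient needed.

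First I would substitute the standing hypothesis $~_{b}^{C}\Delta^\alpha f(u)\geq -\frac{(b-u)^{(-\alpha)}}{\Gamma(1-\alpha)}f(b)$ into the displayed identity at $t=u$; the correction term cancels exactly, so that $~_{b}\Delta^\alpha f(u)\geq 0$ for every $u\in{}_{b-(1-\alpha)}\mathbb{N}$. Since we already assume $a=0$, $b>0$, that $f$ is defined on $\mathbb{N}_0\cap{}_{b}\mathbb{N}$, and that $f(b)\geq 0$, all the hypotheses of Theorem \ref{d5} are now in force. Applying Theorem \ref{d5} yields that $f$ is $\alpha$-decreasing on $~_{b}\mathbb{N}$, i.e.\ $f(t)\geq\alpha f(t+1)$ for all $t\in{}_{b}\mathbb{N}$, which is precisely the desired conclusion.

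There is no genuine analytic obstacle: the argument is a single substitution followed by invocation of Theorem \ref{d5}. The one point to verify with care is the bookkeeping --- that the set ${}_{b-(1-\alpha)}\mathbb{N}$ on which the Caputo inequality is imposed is exactly the domain on which Theorem \ref{d5} needs the Riemann inequality, and that $\frac{(b-u)^{(-\alpha)}}{\Gamma(1-\alpha)}f(b)$ in the hypothesis is literally the term subtracted in (\ref{relate4}) at $t=u$, so the cancellation is an identity and not merely an inequality pointing the wrong way. An alternative, equivalent route is to apply the $Q$-operator identity (\ref{cpr}) together with the delta left Caputo result Theorem \ref{C5delta} to $g(t)=f(b-t)$ (noting $g(0)=f(b)\geq 0$), as suggested by the remark; this leads to the same statement modulo the usual index translations.
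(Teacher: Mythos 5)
Your proposal is correct and follows exactly the route the paper intends: the remark containing Theorem \ref{cd5} says the Caputo right results are obtained from the Riemann ones via identity (\ref{relate2}) (which for $0<\alpha<1$ is (\ref{relate4})), and your substitution--cancellation argument reducing the hypothesis to $~_{b}\Delta^\alpha f(u)\geq 0$ followed by an appeal to Theorem \ref{d5} is precisely that. Nothing further is needed.
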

\end{enumerate}

\end{rem}

\section{Conclusion}
After, we investigated the monotonicity properties for delta and nabla Riemann and Caputo fractional difference operators, we list the following:

\begin{enumerate}
  \item  Using dual identities is useful in providing short proofs for the nabla case using the delta case and for the right case using the left case.
  \item  The monotonicity properties for Caputo fractional difference operators can be proved by using the properties for the Riemann ones via the relation between them. This is very important, since Caputo fractional differences is different from Riemann fractional differences in many aspects.
  \item  The discrete version of the $Q-$operators plays an important role in proving the monotonicity properties for the right case given the result for the left case.
  \item  We noticed that the positivity of the left fractional differences under certain extra starting conditions in the delta case implies that the function is increasing for $1<\alpha <2$ and $\alpha-$increasing for $0<\alpha <1$. While positivity of the right fractional difference implies that the function is decreasing for $1<\alpha <2$ and $\alpha-$decreasing for $0<\alpha <1$. However, we have to be careful that the right case is not equivalent to the case when we assume that the delta fractional difference is negative, since the sign for the starting type condition is different.
  \item  In some cases we need starting conditions when we deal with delta fractional differences, while we may not need this type of condition for the nabla case. The dual identities clarify this.
\end{enumerate}

\end{document}